\tikzset{ font={\fontsize{9pt}{12}\selectfont}}
\newtheorem{theorem}{Theorem}[section]
\newtheorem{proposition}[theorem]{Proposition}
\newtheorem{lemma}[theorem]{Lemma}
\newtheorem{corollary}[theorem]{Corollary}
\theoremstyle{definition}
\newtheorem{remark}[theorem]{Remark}
\newcommand{\com}{\mathbb{C}}
\newcommand{\wcom}{\widehat{\mathbb{C}}}
\newcommand{\nat}{\mathbb{N}}
\newcommand{\rat}{\mathbb{Q}}
\newcommand{\dis}{\mathbb{D}}
\begin{document}

\title{Connectivity of the Julia set for the Chebyshev-Halley family on degree $n$ polynomials}
\author{\footnote{Instituto de Matem\'aticas y Aplicaciones de Castell\'on (IMAC), Universitat Jaume I. Spain} B. Campos, \footnote{Universit\'e Paris-Est Marne-la-Vall\'ee. France} J. Canela and $^*$P. Vindel \\
{\footnotesize campos@uji.es, canela@maia.ub.es, vindel@uji.es}}
\date{}
\maketitle

\begin{abstract}
We study the Chebyshev-Halley family of root finding algorithms from the point of view of holomorphic dynamics.
 Numerical experiments show that the speed of convergence to the roots may be slower when the basins of attraction are not simply connected.
  In this paper we provide a criterion which guarantees the simple connectivity of the basins of attraction of the roots. 
We use the criterion for the Chebyshev-Halley methods applied to the degree $n$ polynomials $z^{n}+c$, obtaining a characterization of the parameters for which all Fatou components are simply connected and, therefore, the Julia set is connected. We also study how increasing $n$ affects the dynamics.
\end{abstract}

\section{Introduction}

Most of the problems faced by scientists and engineers involve equations that do not have a known
analytical solution.
 Numerical methods are  a good option to tackle and
solve real world problems. In particular,
iterative methods are used to find  approximations of the solutions of  $f(z)=0$.

The best-known  root-finding algorithm is Newton's method, which has  order of convergence 2. Many numerical  methods of order three or more are derived from Newton's scheme:
 Chebyshev method, also known as  super-Newton method (see \cite{Kneisl}, for example),
 Halley's method
and  super-Halley method.
A more detailed study of the construction and evolution of these numerical methods can be seen in \cite{olivo}.
These methods belong to a family of numerical algorithms called the Chebyshev-Halley family, which is given by
\begin{equation}
 x_{n+1} =x_{n}-\left( 1+\frac{1}{2}\;\frac{L_{f}\left(  x_{n}\right) }
 {1-\alpha L_{f}\left(  x_{n}\right) }\right) \frac{f\left(  x_{n}\right) }{f^{\prime
}\left(  x_{n}\right) }, \label{metodo}
\end{equation}
where
\begin{equation*}
L_{f}\left( x_{n}\right) =\frac{f\left( x_{n}\right) f^{\prime \prime }\left(
x_{n}\right) }{\left( f^{\prime }\left( x_{n}\right) \right) ^{2}}
\end{equation*}
and $\alpha\in\mathbb{C}$. Within this family, Chebyshev method is obtained  for $\alpha=0$, Halley's method is obtained for $\alpha=\frac{1}{2}$
and super-Halley method is obtained for $\alpha=1$.  Moreover, as $\alpha$ tends to $\infty$ these algorithms converge to Newton's method.

 Failures in intermediate calculations made by a computer are very difficult to detect. Consequently, one of the common aims of
numerical analysis is to select robust algorithms, that is, algorithms with
a good numerical stability in a wide range of situations. Therefore, it is
usual to consider iterative methods with high order of convergence. However, the radii of convergence which ensure
that the solution of the method is correct may decrease when we increase the order of the method.
One way to address this issue is to study the numerical method from
a dynamical point of view, i.e., to consider the iterative method as a
discrete dynamical system and to study its stability. This is a line of work
that has proven to be especially fruitful in recent years (see, for example, the
papers  \cite{Amat},  \cite{ArgyrosAlberto2}, \cite{ArgyrosAlberto},
\cite{alfac}, \cite{familiac}, \cite{king},  \cite{hernandez}).

In this paper we  study the  Chebyshev-Halley  family from a dynamical point of view in order to find which members of the family have better stability.
To carry out this dynamical study,  the root-finding algorithm is applied to a polynomial $P$.
By doing so, we obtain a rational map $Q:\wcom\rightarrow\wcom$, where $\wcom$ denotes de Riemann sphere, whose dynamics describes how behaves the method  when applied to the polynomial $P$.
Indeed, the points which converge to the roots of $P$ when applying to the numerical method are exactly those which converge to the roots of $P$ when iterating the map $Q$.

We shall give a short introduction to the concepts used in holomorphic dynamics. A more detailed  description of the topic can be found  in \cite{beardon} and \cite{Milnor}.
We consider the discrete dynamical system given by the iterates of
a rational map $Q:\widehat{\mathbb{C}}\rightarrow \widehat{\mathbb{C}}$.
A point $z_{0}\in \widehat{\mathbb{C}}$ is called \emph{fixed} if
$Q\left(z_{0}\right) =z_{0}$ and periodic of period
$p>1$ if $Q^{p}\left( z_{0}\right) =z_{0}$ and $Q^{k}\left( z_{0}\right) \neq
z_{0}$ for $k<p$. In the later case, we say that $\langle z_0 \rangle=\{z_0, Q(z_0), \cdots, Q^{p-1}(z_0)\}$ is a $p$-cycle.  A point is \emph{preperiodic} if it is eventually mapped under iteration of $Q$ onto a periodic point. The multiplier of a fixed point $z_0$ is given by  $\lambda
=Q^{\prime }(z_{0})$. Analogously, the multiplier of a $p$-cycle $\langle z_0\rangle$ is given by $\left(Q^p\right)'(z_0)$.  We say that a fixed point or a cycle is \emph{attracting} if
$|\lambda |<1$  (\emph{superattracting} if $\lambda =0$),  \emph{repelling} if
$|\lambda |>1$, and \emph{indifferent} if $|\lambda |=1$. In the later case $\lambda =e^{2\pi i\theta }$, where $\theta \in [ 0,1)$. We say that an indifferent point or cycle is \emph{rationally
indifferent} or \emph{parabolic} if $\theta \in\rat$.
Any attracting or parabolic point $z_0$  has a \emph{basin of attraction}, an open set of points which converge under iteration of $Q$ to $z_0$, related to it. We denote it by
\begin{equation*}
\mathcal{A}\left( z_{0}\right) =\{z\in \widehat{\mathbb{C}}\ :\ Q^{k}\left(
z\right) {\rightarrow }z_{0}\ \ \text{when}\ \ k{\rightarrow }\infty \}.
\end{equation*}
The basin of attraction of an attracting (or parabolic) $p$-cycle can be defined analogously using that all elements of $\langle z_0 \rangle$ are attracting (or parabolic) fixed points of $Q^p$.

 The dynamics of rational map $Q$ splits the Riemann sphere into two totally invariant subsets.  The \emph{Fatou set}, $\mathcal{F}\left( Q\right) $,  consists of the  $z\in \widehat{\mathbb{C}}$ such that the
family of iterates of $Q$, $\{Q(z),Q^{2}(z),\ldots ,Q^{k}(z),\ldots \}$,
is normal, or equivalently equicontinuous, in some open neighbourhood $U$ of $z$. The Fatou set is open and corresponds to the set of points with stable dynamics. Its complement, the
\emph{Julia set} $\mathcal{J}(Q)$, is  closed and corresponds to the set of points which present chaotic behaviour.
The connected components of the Fatou set, called \emph{Fatou components},
are mapped among themselves under iteration. All Fatou components of a rational map are either periodic or
preperiodic (\cite{Su}). By the Classification Theorem (see e.g.\ \cite{Milnor}), all periodic Fatou components are either basins of attraction of
attracting or parabolic cycles, or simply connected rotation domains (Siegel disks) or doubly connected rotation domains (Herman rings). Moreover, all periodic Fatou components are related to a critical point, that is, a point $z\in\wcom$ such that $Q'(z)=0$. Indeed,  the basin of attraction of an attracting or a parabolic cycle contains, at least, a critical point. Also, the orbit of,
at least, a critical point accumulates on  the
boundary of a Siegel disk or a Herman ring.

When a root-finding algorithm is studied from the point of view of holomorphic
dynamics, it is usual to apply the method to low degree polynomials (see for instance \cite{gato}, \cite{alfac}, \cite{familiac}, \cite{king}).
The reason is that when the degree of the polynomial increases,  the number of critical points of the rational map obtained  also increases.
 This is a serious drawback when analysing the
parameter spaces of the methods applied to high degree polynomials, since the orbits of the critical points are crucial to establish the existence of basins of attraction which do not come from the roots.

In the paper \cite{jordi}, we consider the Chebyshev-Halley family of numerical methods applied to the degree $n$ polynomials $z^n+c$, where $z\in\com$ and $c\in\com\setminus\{0\}$. As far as we know, this is the first time that a study of a family of root-finding algorithms applied to degree $n$ polynomials is considered from the point of view of dynamical systems and the corresponding parameter spaces are provided. In Figure \ref{fig:paramgaton} we show the parameter spaces of this family for different values of $n$.
Despite of the increase in the number of critical points, we use the symmetries of the dynamical system obtained to justify that it is enough to follow the orbit of a single critical point to determine the existence of basins of
attraction other than the ones provided by the roots.
 This property allows us to obtain pictures of the parameter spaces for polynomials of this family.
  We use these numerical studies together with some theoretical results to provide a first analysis of how the sets of parameters with good dynamical behaviour vary as $n$ increases.
  For fixed $n$, we show that  there exists a unique parameter $\alpha$ for which the Chebyshev-Halley method has order of convergence 4.
 However, the dynamics of such points may vary when we increase $n$. For instance, for $\alpha=1$ (super-Halley method) the algorithm has order of convergence 4 for $n=2$, but it presents bad dynamical behaviour for big $n$ (see Figures \ref{fig:dynam3}, \ref{fig:dynam10} and \ref{fig:dynam25}).

The goal of this paper is to continue the research began in \cite{jordi}. We focus on the study of pathological dynamical behaviour which appears both in the dynamical and the parameter planes.
 First, we study the dynamical conditions which lead to non-simply connected basins of attraction of the roots. It has been numerically observed that the basins of attraction may have holes, which seem to lead to slower speed of convergence.
 In Section~\ref{sec_Conectividad} we provide a dynamical condition (Proposition~\ref{propconn}) which can be applied to any holomorphic family of rational maps with a single free critical point (modulo symmetries).
  Using this condition, we prove that the basins of attractions of the roots for the Chebyshev-Halley family are multiply connected  if and only if the immediate
basin of attraction of the root $z=1$ contains another critical point  $c\neq 1$, and no preimage of $z=1$ other than himself. This characterization is used to study the connectivity of their Julia sets.  In Theorem \ref{thmconJulia} we prove that the  Julia set of the Chebyshev-Halley operator is disconnected if and only if the previous condition holds.  These results allow us to locate  the values of the parameter for which the Julia set is disconnected,  and
therefore, the numerical methods are more unstable.

\begin{figure}[p]
\centering
\subfigure[\small{n=2} ]{
    \begin{tikzpicture}
    \begin{axis}[width=210pt, axis equal image, scale only axis,  enlargelimits=false, axis on top]
      \addplot graphics[xmin=-1.4,xmax=4.6,ymin=-2,ymax=2] {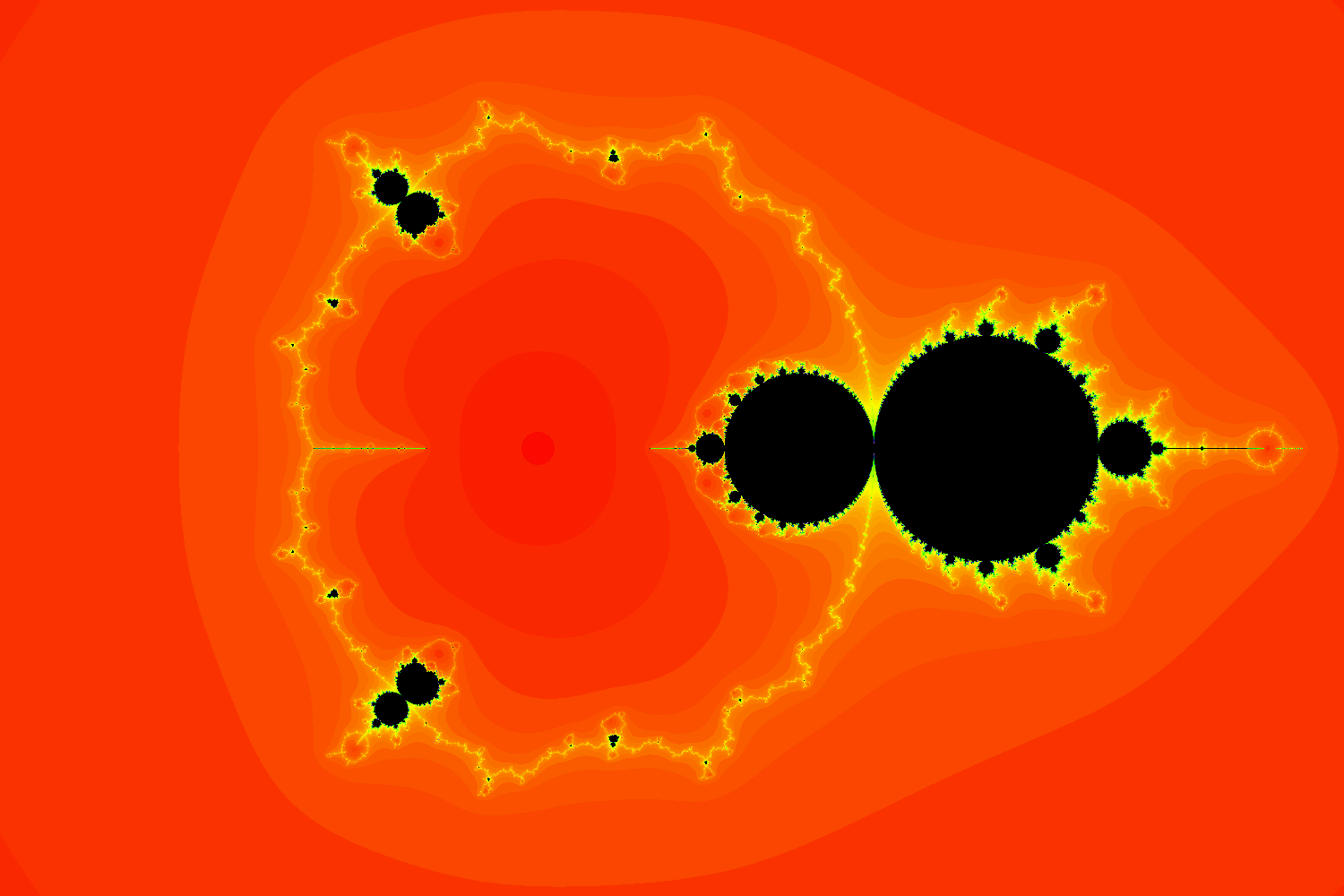};
    \end{axis}
  \end{tikzpicture}
    }
    \hfill
  \subfigure[\small{n=3} ]{
    \begin{tikzpicture}
    \begin{axis}[width=210pt, axis equal image, scale only axis,  enlargelimits=false, axis on top]
      \addplot graphics[xmin=-1.4,xmax=4.6,ymin=-2,ymax=2] {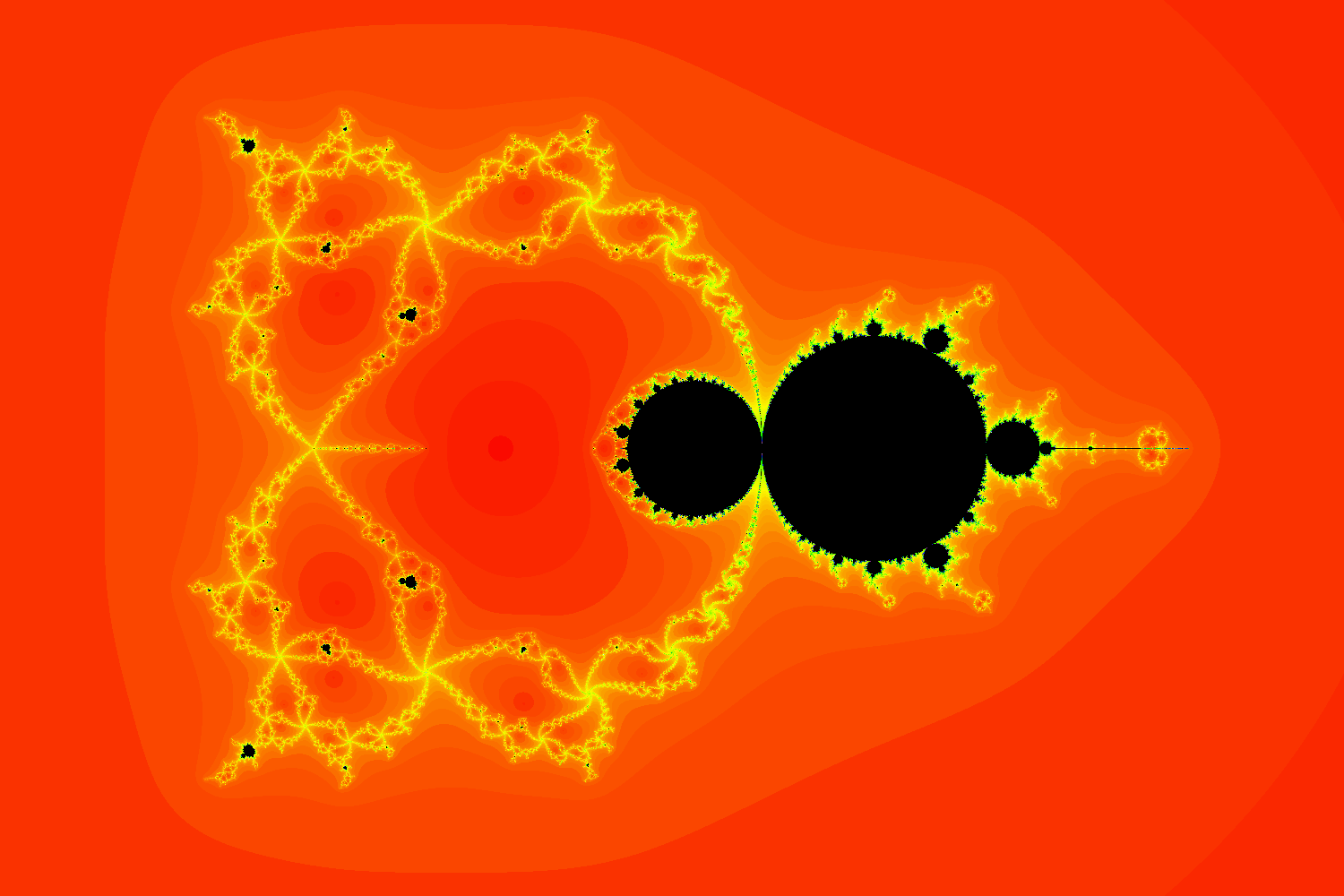};
    \end{axis}
  \end{tikzpicture}
    }

    \subfigure[\small{n=5}]{
	   	
    	\begin{tikzpicture}
    		\begin{axis}[width=210pt, axis equal image, scale only axis,  enlargelimits=false, axis on top]
      			\addplot graphics[xmin=-1.4,xmax=4.6,ymin=-2,ymax=2] {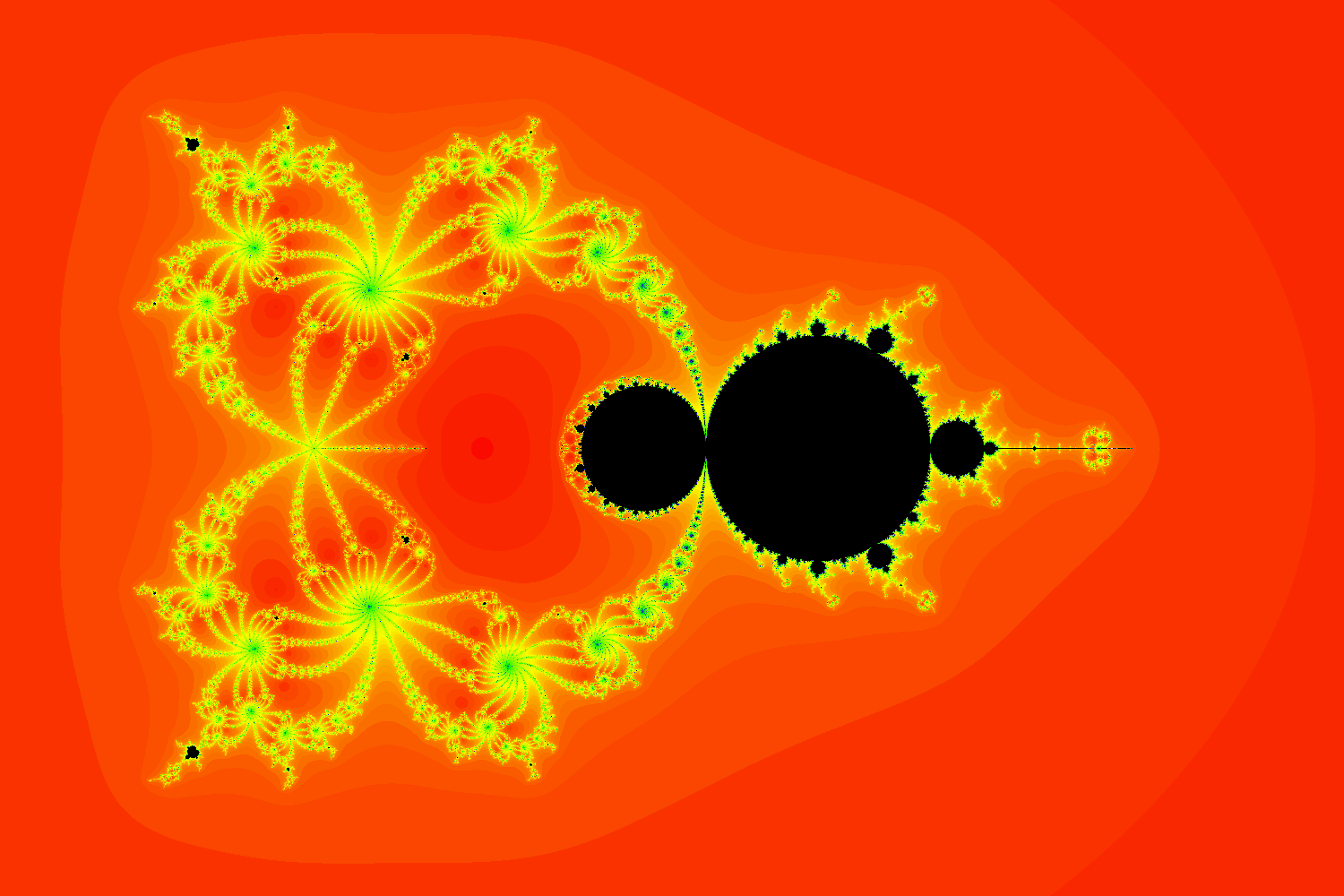};
    		\end{axis}
  		\end{tikzpicture}	
  }
  \subfigure[\small{n=10} ]{
    \begin{tikzpicture}
    \begin{axis}[width=210pt, axis equal image, scale only axis,  enlargelimits=false, axis on top]
      \addplot graphics[xmin=-1.4,xmax=4.6,ymin=-2,ymax=2] {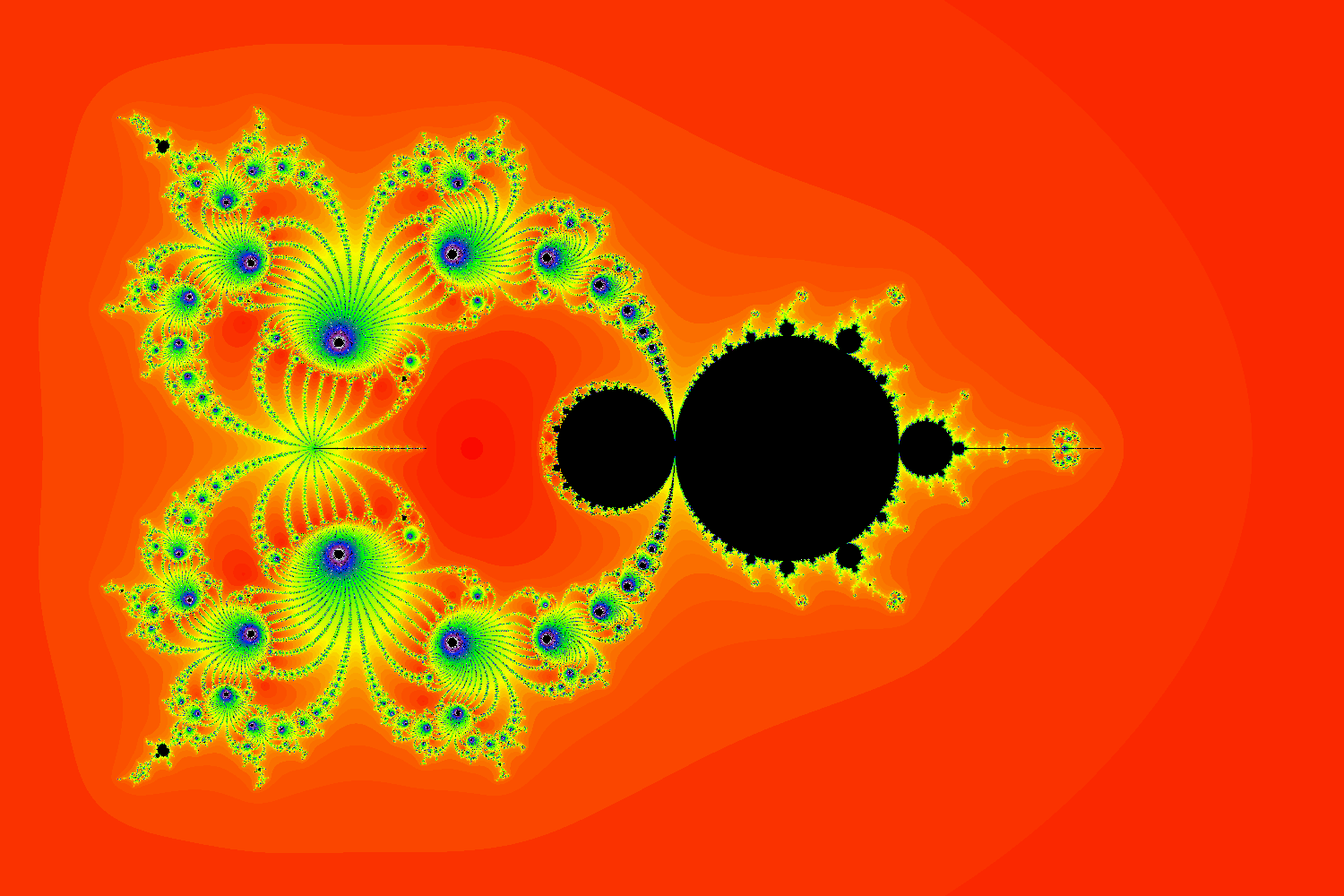};
    \end{axis}
  \end{tikzpicture}
    }

    \subfigure[\small{n=25}]{
    	\begin{tikzpicture}
    		\begin{axis}[width=210pt, axis equal image, scale only axis,  enlargelimits=false, axis on top]
      			\addplot graphics[xmin=-1.4,xmax=4.6,ymin=-2,ymax=2] {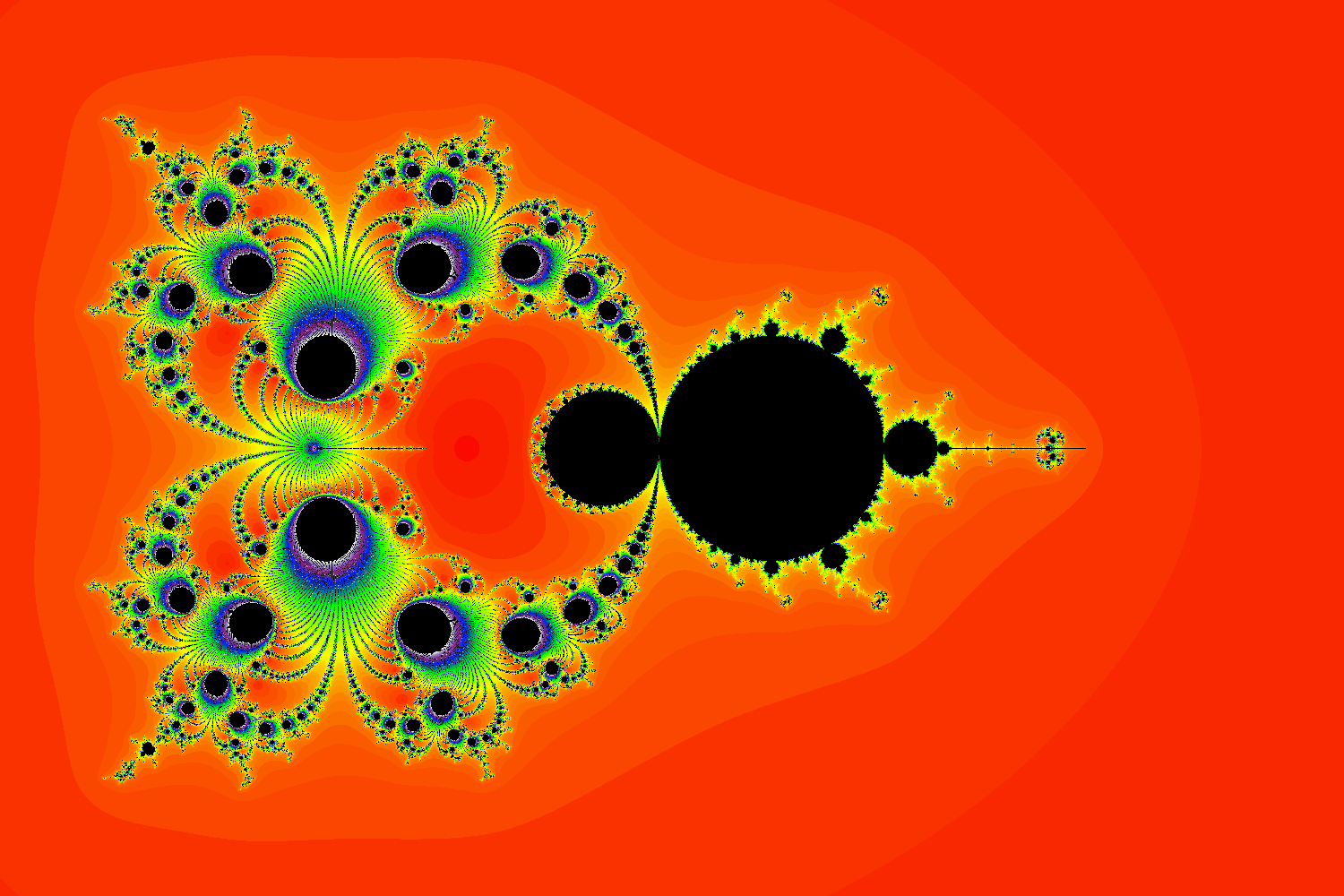};
    		\end{axis}
  		\end{tikzpicture}	
  }
  \subfigure[\small{n=100} ]{
    \begin{tikzpicture}
    \begin{axis}[width=210pt, axis equal image, scale only axis,  enlargelimits=false, axis on top]
      \addplot graphics[xmin=-1.4,xmax=4.6,ymin=-2,ymax=2] {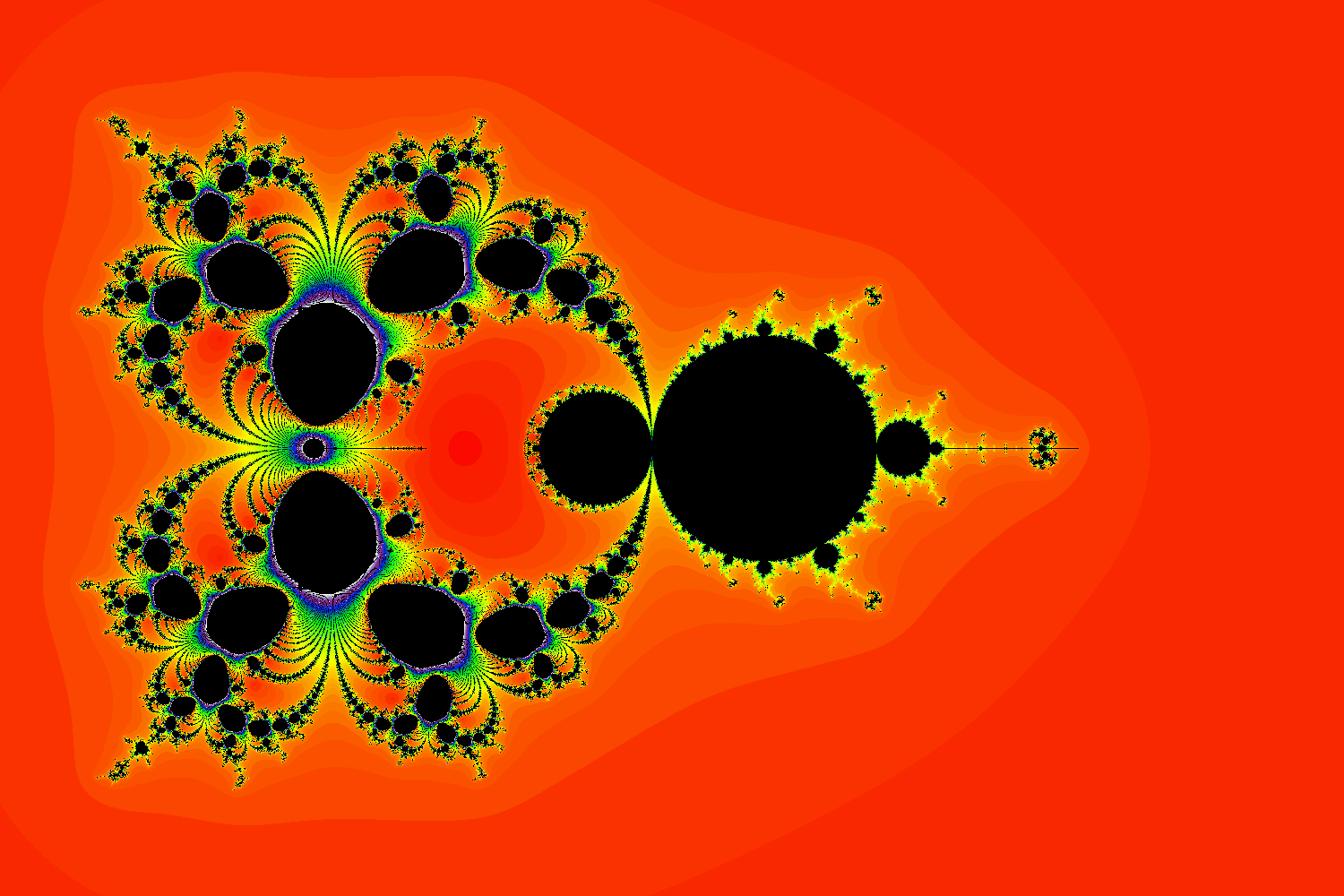};
    \end{axis}
  \end{tikzpicture}
    }
      \caption{\small Parameter spaces of the Chebyshev-Halley family applied to $z^n-1$.% for  $n\in\{2,3,5,10,25,100\}$.% The parameters $\alpha$ are taken so that $\re(\alpha)\in(-1.4, 4.6)$ and $\im(\alpha)\in(-2,2)$.
      \label{fig:paramgaton}}
    \end{figure}

Afterwards, we study the parameters which present bad behaviour when drawing the parameter planes. Parameter planes of the Chebyshev-Halley family applied to $z^n-1$ are shown in Figure~\ref{fig:paramgaton}, for several values of $n$. The drawings are obtained as follows.
 For each parameter $\alpha$ we iterate a critical point up to 150 times.
  If the orbit of the critical point converges to a root (that is, the iterate $w$ satisfies $|w-\xi|<10^{-4}$ for some $n$th-root of the unity $\xi$) in  less than 150 iterates, we plot the corresponding point using a scale from red (fast convergence) to green to purple and to grey (slow convergence).
   If after 150 iterates the critical orbit has not converged to a root, we plot the point in black. See Section \ref{sec_Numerico_param} for a more detailed explanation on how the images of parameter planes are produced. We observe in red parameters for which the corresponding critical points converge fast to the roots.
    We can also observe  the \textit{Cat set}, the set of  parameters for which the critical
 points do not belong to the basins of attraction of the roots. In all the figures we can distinguish two big disks called the head an the body of the Cat. These sets correspond to parameters for which strange fixed points are attracting (see Proposition~\ref{caracterinfty} and Proposition~\ref{caracterExt}).
  The Cat set consists of the head and the body (with their decorations) and a necklace-like structure which surrounds the head, that we call the \textit{Collar}.
  For $n$ small,  we observe  the Collar coloured in yellow. However, for $n\in\{25,100\}$ there appear some black disks which do not correspond to stable behaviour, as is the case for the head and the body. In this paper we analyse around which bifurcation parameters these regions appear (see Section~\ref{sec_bifurcation} and \ref{sec_Numerico_param}), and study the associated  operators from a numerical point of view (Section~\ref{sec_Numerico_dynam}).

The paper is structured as follows. In Section~\ref{sec_polimomios_n} we recall the main properties of the operators obtained applying the Chebyshev-Halley methods to the polynomials $z^n+c$ and analyse the stability of the strange fixed points. In Section~\ref{sec_Conectividad} we provide a dynamical condition for the simple connectivity of the basins of attraction of the roots and the connectivity of the Julia set. In Section~\ref{sec_bifurcation} we study different bifurcation parameters within the Collar of the Cat set. Finally, in Section~\ref{sec_Numerico} we study the parameter and the dynamical planes of the family from a numerical point of view.

\section{Dynamical study on degree \textit{n} polynomials \label{sec_polimomios_n}}

We study is the Chebyshev-Halley family of
numerical methods applied on the polynomials $z^{n}+c$. Results of this
family applied on polynomials of degree two can be seen in \cite{gato}, \cite{bulbosCheby}, \cite{periododobleCheby}.

In \cite{jordi} we show that the operator obtained for $f\left( z\right)
=z^{n}-1$ is conjugate to the one obtained when applying the
Chebyshev-Halley method to $f\left( z\right) =z^{n}+c$, $c\in \mathbb{C}
-\{0\}$. Therefore, it is enough to study the Chebyshev-Halley methods for $
z^{n}-1$ to understand them for all $z^{n}+c$, where $c\in \mathbb{C}-\{0\}$.

We denote by $O_{n,\alpha }(z)$ the fixed point operator  obtained  for $
z^{n}-1.$ By substituting $f\left( z\right) =z^{n}-1$ in (\ref{metodo})\ we have:

\begin{equation*}
O_{n,\alpha }(z)=z-\frac{(z^{n}-1)((-1+2\alpha +n-2\alpha n)+(1-2\alpha
-3n+2\alpha n)z^{n})}{2nz^{n-1}(\alpha (n-1)(z^{n}-1)-nz^{n})}=
\end{equation*}
\begin{equation}
=\frac{(1-2\alpha )(n-1)+(2-4\alpha -4n+6\alpha n-2\alpha
n^{2})z^{n}+(n-1)(1-2\alpha -2n+2\alpha n)z^{2n}}{2nz^{n-1}(\alpha
(1-n)+(-\alpha -n+\alpha n)z^{n})}.  \label{Opn}
\end{equation}

\noindent Except for degenerate cases studied in Section~\ref{sec_bifurcation}, the
degree of this operator is $2n$.  Therefore, it has $2n+1$  fixed points and $4n-2$ critical points.

The fixed points are
obtained by solving $O_{n,\alpha }(z)=z$. On one hand, we obtain the $n$th
-roots of the unity, corresponding to the zeros of the polynomial $z^{n}-1$,
which are superattracting fixed points. The other $n+1$ fixed points are $
z=\infty $ and the $n$ solutions of the equation

\begin{equation}
-1+2\alpha +n-2\alpha n+(1-2\alpha -3n+2\alpha n)z^{n}=0.  \label{fijosExt}
\end{equation}
These points are called \emph{strange fixed points}, because they do not match with the solutions of
the polynomial.
The critical points of the operator $O_{n,\alpha }$ are the solutions of $
O_{n,\alpha }^{\prime }(z)=0$, where
\begin{equation}
O_{n,\alpha }^{\prime }(z)=\frac{(z^{n}\!-\!1)^{2}(n-1)(\alpha (1-2\alpha
)(n-1)^{2}+\left( 1-2n-2\alpha +2\alpha n\right) \left( -\alpha -n+\alpha
n\right) z^{n})}{2nz^{n}(\alpha \left( n-1\right) +\left( \alpha +n-\alpha
n\right) z^{n})^{2}}.  \label{eq:O'}
\end{equation}

\noindent The $n$th-roots of the unity are double critical points and, hence, are
superattracting fixed points of local degree 3. The point $z=0$ is a
critical point of multiplicity $n-2$ since it is mapped with degree $n-1$ to
$z=\infty$. This assertion follows from the term $1/z^{n-1}$ on Equation
(\ref{Opn}). The remaining  $n$  critical points are given by

\begin{equation}
c_{n,\alpha ,\xi }=c_{\xi }=\xi \left( \frac{\alpha (n-1)^{2}(2\alpha -1)}{
n(2n-1)-\alpha (4n-1)(n-1)+2\alpha ^{2}(n-1)^{2}}\right) ^{1/n},
\label{criticalpointsn}
\end{equation}

\noindent where $\xi $ denotes an $n$th-root of the unity, \i .e. $\xi^{n}=1$.
The existence of any stable behaviour of the dynamical system other
than the basins of attraction of the zeros of $z^{n}-1$ is controlled by
the orbits of these $n$ free critical points.

The order of convergence to the roots for all members the Chebyshev-Halley family is at least 3. The next result, which corresponds to \cite[Proposition 6.3]{jordi}, states that there is a single parameter for which the order of convergence increases to 4. In this case, the $n$th-roots of the unity are critical points of multiplicity three and are
superattracting fixed points of local degree 4.

\begin{proposition}\label{grado4}
 For $n\geq2$, the operator $O_{n,\alpha}$ has order of convergence 4 if and only if $$\alpha=\frac{2n-1}{3n-3}.$$
\end{proposition}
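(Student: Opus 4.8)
The plan is to translate the statement about order of convergence into a statement about the order of vanishing of $O_{n,\alpha}'$ at the roots, and then to read off the condition from the explicit factorization recorded in~(\ref{eq:O'}). Recall that for these fixed-point iterations a root $\xi$ (with $\xi^n=1$) is a superattracting fixed point of $O_{n,\alpha}$, and the order of convergence to $\xi$ equals its local degree as a fixed point: if $O_{n,\alpha}(z)-\xi=c(z-\xi)^{d}+\cdots$ with $c\neq 0$, then the order of convergence is $d$, and this is precisely the situation in which $O_{n,\alpha}'$ vanishes to order $d-1$ at $\xi$. Thus order of convergence $4$ is equivalent to $O_{n,\alpha}'$ having a zero of order exactly $3$ at each root $\xi$, while the generic order $3$ corresponds to a zero of order $2$.

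Next I would exploit the explicit expression~(\ref{eq:O'}). Writing its numerator as $(n-1)(z^n-1)^2\,g(z)$ with
\begin{equation*}
g(z)=\alpha(1-2\alpha)(n-1)^2+\bigl(1-2n-2\alpha+2\alpha n\bigr)\bigl(-\alpha-n+\alpha n\bigr)z^n,
\end{equation*}
the factor $(z^n-1)^2$ already accounts for the generic order-$2$ vanishing, and hence for the order-$3$ convergence valid for every $\alpha$. One first checks that the denominator of~(\ref{eq:O'}) does not vanish at $\xi$: substituting $\xi^n=1$ collapses $\alpha(n-1)+(\alpha+n-\alpha n)\xi^n$ to $n$, so the denominator equals $2n^3\neq 0$ there. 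Consequently the order of vanishing of $O_{n,\alpha}'$ at $\xi$ equals that of the numerator, and the extra unit of vanishing needed for order-$4$ convergence occurs exactly when $g(\xi)=0$. Since $g$ depends on $z$ only through $z^n$ and $\xi^n=1$, the value $g(\xi)$ is the same for all $n$ roots; moreover, writing $g(z)=a+bz^n$, the condition $g(\xi)=a+b=0$ forces $g(z)=b(z^n-1)$, so that the numerator becomes $b(n-1)(z^n-1)^3$, which vanishes to order exactly $3$ at each root provided $b\neq 0$.

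It then remains to solve $a+b=0$, that is,
\begin{equation*}
\alpha(1-2\alpha)(n-1)^2+\bigl(1-2n-2\alpha+2\alpha n\bigr)\bigl(-\alpha-n+\alpha n\bigr)=0.
\end{equation*}
The key simplification, and the reason a unique solution appears, is that upon expansion the quadratic terms in $\alpha$ cancel: the $-2\alpha^2(n-1)^2$ coming from $a$ is exactly offset by the $+2\alpha^2(n-1)^2$ coming from the product in $b$. What survives is an affine equation in $\alpha$, which collapses to $-3n(n-1)\alpha+n(2n-1)=0$ and hence to $\alpha=\frac{2n-1}{3n-3}$. The ``if and only if'' is then immediate: this value is the unique root of the linear equation $g(\xi)=0$, so it is the unique parameter for which $O_{n,\alpha}'$ vanishes to order $3$ rather than $2$ at the roots. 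To be fully rigorous one should also confirm that at this $\alpha$ the leftover coefficient $b$ is nonzero, which guarantees that the order of vanishing is exactly $3$ (so the convergence is order $4$ and not higher) and that the operator remains non-degenerate of degree $2n$.

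I anticipate the only real obstacle to be bookkeeping rather than anything conceptual: the expansion of the product in $b$ and the verification that the $\alpha^2$ contributions cancel must be carried out carefully, since it is precisely this cancellation that makes the defining equation linear and the parameter unique. Everything else—identifying the order of convergence with the local degree, isolating the $(z^n-1)^2$ factor in~(\ref{eq:O'}), and checking the denominator at $\xi$—is structural and follows directly from the expressions already recorded.
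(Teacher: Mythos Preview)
Your argument is correct. The paper itself does not prove this proposition; it merely quotes it from \cite[Proposition~6.3]{jordi}, so there is no in-paper proof to compare against. Your approach---reading off the order of vanishing of $O_{n,\alpha}'$ at the roots from the explicit factorization~(\ref{eq:O'}), checking that the denominator equals $2n^3\neq 0$ at each root, and then solving $g(\xi)=0$---is exactly the natural one, and your observation that the $\alpha^2$ terms cancel to leave the linear equation $-3n(n-1)\alpha+n(2n-1)=0$ is the crux. The final check that $b\neq 0$ at $\alpha=\frac{2n-1}{3n-3}$ (so that the order is exactly~$4$) is straightforward: one finds $b=\frac{(2n-1)(n+1)}{9}\neq 0$ for $n\ge 2$.
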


The following lemma, which corresponds to \cite[Lemma 6.2]{jordi}, states that the dynamics of the maps $O_{n,\alpha}$ is symmetric with respect to the $n$th-root of the unity. It follows from the lemma that the $n$ free critical orbits are symmetric and, therefore, it is enough to control one of them.

\begin{lemma}\label{conjucacionn}
Let $n\in\nat$ and let $\xi$ be an $n$th-root of the unity, i.e.\ $\xi^n=1$. Then $I_{\xi}(z)=\xi z$ conjugates $O_{n,\alpha}(z)$ with itself, i.e.\
$$ I_{\xi}\circ O_{n,\alpha}(z)=O_{n,\alpha}\circ I_{\xi}(z).$$
\end{lemma}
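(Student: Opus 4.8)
The plan is to avoid expanding the rational map altogether and instead exploit the symmetry of the underlying polynomial $f(z)=z^n-1$ under the rotation $I_\xi(z)=\xi z$. Since $\xi^n=1$ we have $(\xi z)^n=z^n$, so $f(\xi z)=f(z)$; differentiating, $f'(\xi z)=\xi^{-1}f'(z)$ and $f''(\xi z)=\xi^{-2}f''(z)$. Thus $f,f',f''$ transform homogeneously under $I_\xi$, each acquiring a power of $\xi$ determined by its order of differentiation (using $\xi^{n-1}=\xi^{-1}$ and $\xi^{n-2}=\xi^{-2}$).

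First I would check that the auxiliary quantity $L_f$ is invariant. From $L_f(z)=f(z)f''(z)/(f'(z))^2$, the substitution $z\mapsto\xi z$ contributes a factor $\xi^{-2}$ in the numerator (from $f\cdot f''$) and the same factor $\xi^{-2}$ in the denominator (from $(f')^2$), so they cancel and $L_f(\xi z)=L_f(z)$. Consequently the entire bracketed factor $1+\frac{1}{2}L_f/(1-\alpha L_f)$ appearing in (\ref{metodo}) is unchanged by $I_\xi$.

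Next I would track the Newton quotient: $f(\xi z)/f'(\xi z)=f(z)/(\xi^{-1}f'(z))=\xi\,f(z)/f'(z)$. Substituting these two facts into the definition (\ref{metodo}) of the method gives
$$O_{n,\alpha}(\xi z)=\xi z-\Bigl(1+\tfrac12\tfrac{L_f(z)}{1-\alpha L_f(z)}\Bigr)\xi\,\tfrac{f(z)}{f'(z)}=\xi\,O_{n,\alpha}(z),$$
which is precisely $I_\xi\circ O_{n,\alpha}=O_{n,\alpha}\circ I_\xi$.

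There is no serious obstacle here: the statement is essentially a homogeneity computation, and the only care needed is in tracking the powers of $\xi$ and confirming that they cancel exactly inside $L_f$. As an alternative, and as a consistency check, one may argue directly from the closed form (\ref{Opn}): writing $O_{n,\alpha}(z)=z\,\psi(z^n)$, where $\psi$ is the rational function read off from the numerator and denominator of (\ref{Opn}) after absorbing the factor $z^{n-1}=z^n/z$, the identity $(\xi z)^n=z^n$ immediately yields $O_{n,\alpha}(\xi z)=\xi z\,\psi(z^n)=\xi\,O_{n,\alpha}(z)$.
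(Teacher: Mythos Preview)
Your proof is correct. The paper does not actually prove this lemma; it merely cites it as \cite[Lemma~6.2]{jordi}, so there is no argument in the paper to compare against. Your approach via the homogeneity of $f$, $f'$, $f''$ under $I_\xi$ is the natural one, and the alternative check using the closed form $O_{n,\alpha}(z)=z\,\psi(z^n)$ read off from (\ref{Opn}) is equally valid and perhaps the quickest route.
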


The stability of the fixed point $z=\infty$ was studied in \cite{jordi},
where we proved the following proposition.

\begin{proposition}
\label{caracterinfty} The fixed point $z=\infty$ satisfies the following
statements.

\begin{enumerate}
\item If $\left\vert \alpha -\frac{1-4n+5n^2}{2(n-1)(2n-1)}\right\vert <
\frac{n}{2(2n-1)}$, then $z=\infty $ is an attractor. In particular, it is a
superattracting fixed point if its multiplier is equal to $0$, i.e.\ $\alpha
=\frac{n}{n-1}.$

\item If $\left\vert \alpha -\frac{1-4n+5n^2}{2(n-1)(2n-1)}\right\vert =%
\frac{n}{2(2n-1)}$, then $z=\infty $ is an indifferent fixed point.

\item If $\left\vert \alpha -\frac{1-4n+5n^{2}}{2(n-1)(2n-1)}\right\vert >
\frac{n}{2(2n-1)} $, then $z=\infty $ is a repelling fixed point.
\end{enumerate}
\end{proposition}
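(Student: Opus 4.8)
The plan is to reduce the stability analysis of $z=\infty$ to a single multiplier computation and then to recognize the resulting inequality as the equation of a disk in the $\alpha$-plane. Since $\infty$ is a fixed point, I would first move it to the origin through the change of coordinates $w=1/z$, replacing $O_{n,\alpha}$ by its conjugate $g(w)=1/O_{n,\alpha}(1/w)$, which fixes $w=0$. The multiplier of $z=\infty$ is then $\lambda_\infty=g'(0)$. A short expansion shows that if $O_{n,\alpha}(z)=az+b+O(1/z)$ near infinity then $g(w)=w/a+O(w^2)$, so $\lambda_\infty=1/a$, where $a=\lim_{z\to\infty}O_{n,\alpha}(z)/z$ is the quotient of the leading coefficients of the numerator and denominator in (\ref{Opn}). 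Reading these off (the $z^{2n}$ term upstairs and the $z^{2n-1}$ term downstairs, valid in the non-degenerate case where $\deg O_{n,\alpha}=2n$) gives
\begin{equation*}
\lambda_\infty=\frac{2n\bigl((n-1)\alpha-n\bigr)}{(n-1)\bigl(2(n-1)\alpha-(2n-1)\bigr)}.
\end{equation*}

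Next I would treat $\lambda_\infty$ as a Möbius transformation $\lambda_\infty=(p\alpha+q)/(r\alpha+s)$ of the parameter $\alpha$, with the real coefficients $p=2n(n-1)$, $q=-2n^2$, $r=2(n-1)^2$, $s=-(n-1)(2n-1)$. Because a Möbius map sends the unit circle to a circle or a line, the set $\{|\lambda_\infty|<1\}$ is a disk (or half-plane) in the $\alpha$-plane, and the three cases of the statement correspond to its interior, boundary and exterior. To make this explicit I would clear denominators: the attracting condition $|\lambda_\infty|<1$ is equivalent to $|p\alpha+q|^2<|r\alpha+s|^2$. Expanding both sides and using that all coefficients are real yields the real inequality
\begin{equation*}
(p^2-r^2)|\alpha|^2+2(pq-rs)\operatorname{Re}\alpha+(q^2-s^2)<0,
\end{equation*}
and since $p^2-r^2=4(n-1)^2(2n-1)>0$ for $n\geq2$ this is genuinely the inside of a circle.

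Completing the square then identifies the center and radius. Dividing through by $p^2-r^2$, the center is the real number $-\tfrac{pq-rs}{p^2-r^2}$, which after simplification equals $\tfrac{1-4n+5n^2}{2(n-1)(2n-1)}$, matching the statement; the squared radius is $\bigl(\tfrac{pq-rs}{p^2-r^2}\bigr)^2-\tfrac{q^2-s^2}{p^2-r^2}$. The boundary case $|\lambda_\infty|=1$ gives equality (the indifferent circle) and the reversed inequality gives the repelling exterior, so the three items follow simultaneously from this single computation. The superattracting subcase sits at $\lambda_\infty=0$, i.e.\ at the zero $\alpha=n/(n-1)$ of the numerator, as claimed.

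The main obstacle I anticipate is purely algebraic: checking that the squared radius collapses to the clean value $n^2/\bigl(4(2n-1)^2\bigr)$. Concretely this amounts to verifying the polynomial identity $(5n^2-4n+1)^2-(3n-1)(4n^2-3n+1)(2n-1)=n^2(n-1)^2$, after which the radius is exactly $n/(2(2n-1))$. I would confirm this identity by direct expansion (a cubic-by-quadratic product weighed against two squares, so only a few lines of bookkeeping), and I would separately note that the degenerate parameters excluded above—where a leading coefficient of (\ref{Opn}) vanishes and $\deg O_{n,\alpha}<2n$—are exactly the cases deferred to Section~\ref{sec_bifurcation}, so they do not interfere with the stated characterization.
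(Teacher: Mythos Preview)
Your approach is correct. Note, however, that the present paper does not actually prove Proposition~\ref{caracterinfty}: it is quoted from the companion paper \cite{jordi}, so there is no ``paper's own proof'' here to compare against directly. The closest analogue in this paper is the proof of Proposition~\ref{caracterExt}, where the authors compute the multiplier explicitly, substitute $\alpha=a+ib$, and expand $|O_{n,\alpha}'(z^*_\xi)|=1$ into the Cartesian equation of a circle. Your argument is in the same spirit but packaged differently: you recognise $\lambda_\infty$ as a real M\"obius transformation in $\alpha$ and read off the disk from the quadratic inequality $|p\alpha+q|^2<|r\alpha+s|^2$.

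One small remark that would streamline your final step: the polynomial identity you flag as the ``main obstacle'' can be bypassed entirely. From $(pq-rs)^2-(p^2-r^2)(q^2-s^2)=(ps-rq)^2$, the radius is simply $|ps-rq|/(p^2-r^2)$. With your values $p=2n(n-1)$, $q=-2n^2$, $r=2(n-1)^2$, $s=-(n-1)(2n-1)$ one gets $ps-rq=2n(n-1)^2$ and $p^2-r^2=4(n-1)^2(2n-1)$, hence radius $n/(2(2n-1))$ immediately, with no quartic identity to verify.
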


We finish this section studying the stability of the other strange fixed points, which are given by the solutions of (\ref{fijosExt}).

\begin{proposition}
\label{caracterExt}The $n$ strange fixed points are given by
\begin{equation}\label{extraños}
    z^{\ast }_{\xi}=\xi\left(\frac{1-2\alpha -n+2\alpha n}{1-2\alpha -3n+2\alpha n}\right)^{1/n},
\end{equation}
where $\xi$ denotes an $n$th-root of the unity, and satisfy the following statements.

\begin{enumerate}
\item If $\left\vert \alpha -\frac{2n-1}{n-1}\right\vert <\frac{1}{2}$, then
they are attracting fixed points. In particular, \ they are superattracting fixed points
if \ $\alpha =\frac{2n-1}{n-1}.$

\item If $\left\vert \alpha -\frac{2n-1}{n-1}\right\vert =\frac{1}{2}$, then
they are indifferent fixed  points.

\item If $\left\vert \alpha -\frac{2n-1}{n-1}\right\vert >\frac{1}{2}$, then
they are repelling fixed points.
\end{enumerate}
\end{proposition}

\proof
The derivative of the operator is given in Equation (\ref{eq:O'}). Substituting \ $z^{n}=\frac{1-2\alpha -n+2\alpha n}{(1-2\alpha -3n+2\alpha n)
}$ in (\ref{eq:O'}), we have that
\begin{equation*}
O_{n,\alpha }^{\prime }(z^{\ast }_{\xi})=\frac{-2-2\alpha \left( n-1\right) +4n}{
n-1}.
\end{equation*}
Writing $\alpha =a+ib$ and developing the equation $\left\vert O_{n,\alpha
}^{\prime }(z^{\ast }_{\xi})\right\vert =1$ we obtain
\begin{equation*}
\left( a-\frac{2n-1}{n-1}\right) ^{2}+b^{2}=\frac{1}{4},
\end{equation*}
which is the equation of the circle centred at $\frac{2n-1}{n-1}$ with radius 1/2. The strange fixed points are attracting inside this circle, since
 $\left\vert O_{n,\alpha }^{\prime }(z^{\ast }_{\xi})\right\vert <1.$ They are superattracting if
 $ O_{n,\alpha }^{\prime }(z^{\ast }_{\xi}) =0$, which is satisfied  for
 $\alpha =\frac{2n-1}{n-1}$. Outside this circle they satisfy $\left\vert O_{n,\alpha }^{\prime }(z^{\ast})\right\vert >1$,
 so they are repelling. Finally, on the circle they satisfy  $\left\vert O_{n,\alpha }^{\prime }(z^{\ast }_{\xi})\right\vert =1$, so they are
indifferent fixed points.
\endproof

Proposition  \ref{caracterinfty} explains the evolution
of the location and size of the head of the Cat set. In particular, it shows that  the radius of the corresponding disk decreases to $\frac{1}{4}$ as  $n\rightarrow
\infty$.
On the other hand, in Proposition  \ref{caracterExt} we show that the body of
the Cat set is a disk of radius $\frac{1}{2}$, which does not depend on $n$.

\begin{figure}[h!]
  \centering
  \subfigure[\small{$n=3$} ]{
   \begin{tikzpicture}
    \begin{axis}[width=8cm,  axis equal image, scale only axis,  enlargelimits=false, axis on top]
      \addplot graphics[xmin=-3,xmax=3,ymin=-3,ymax=3] {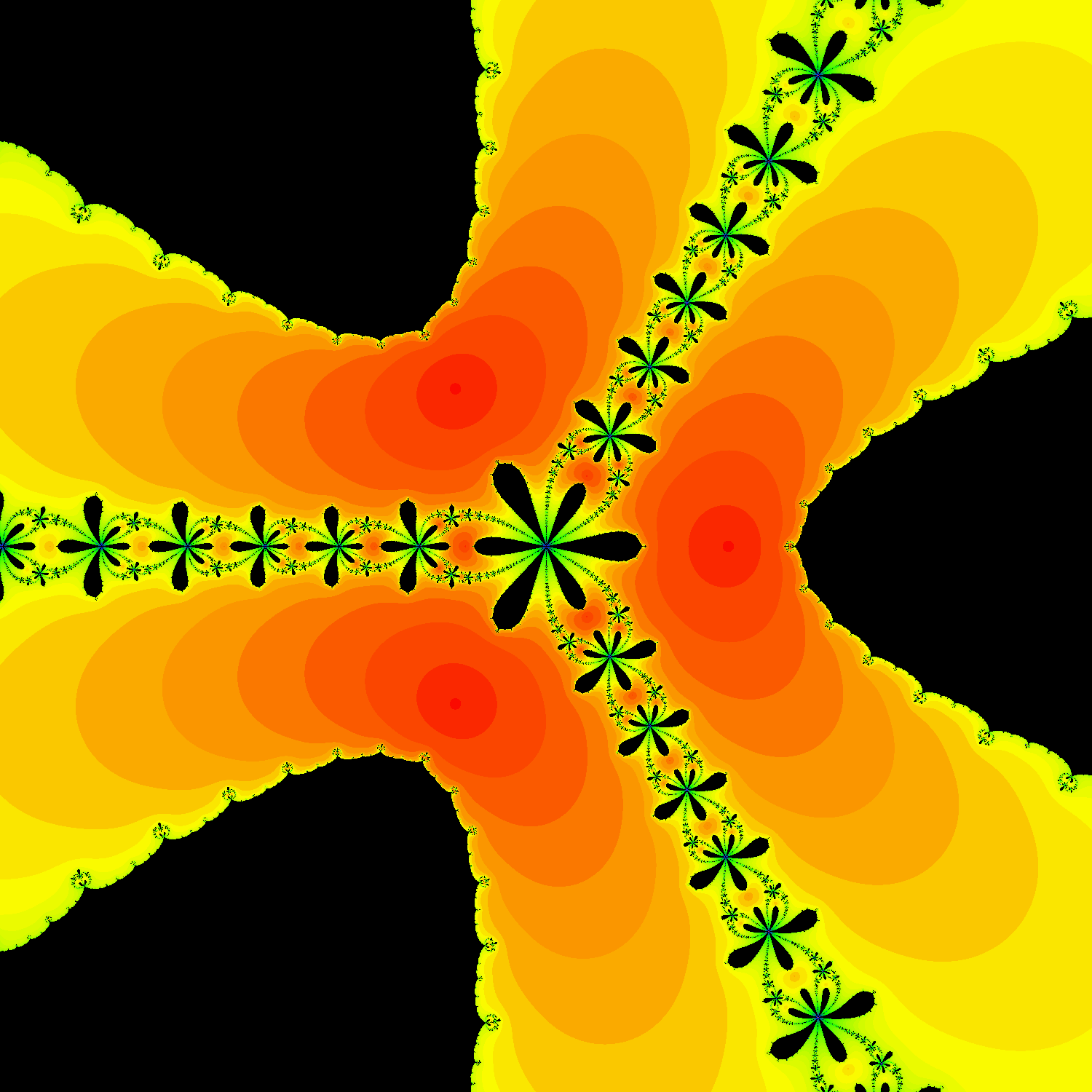};
    \end{axis}
  \end{tikzpicture}
    }
  \subfigure[\small{$n=10$} ]{
 \begin{tikzpicture}
    \begin{axis}[width=8cm,  axis equal image, scale only axis,  enlargelimits=false, axis on top]
      \addplot graphics[xmin=0,xmax=2,ymin=-1,ymax=1] {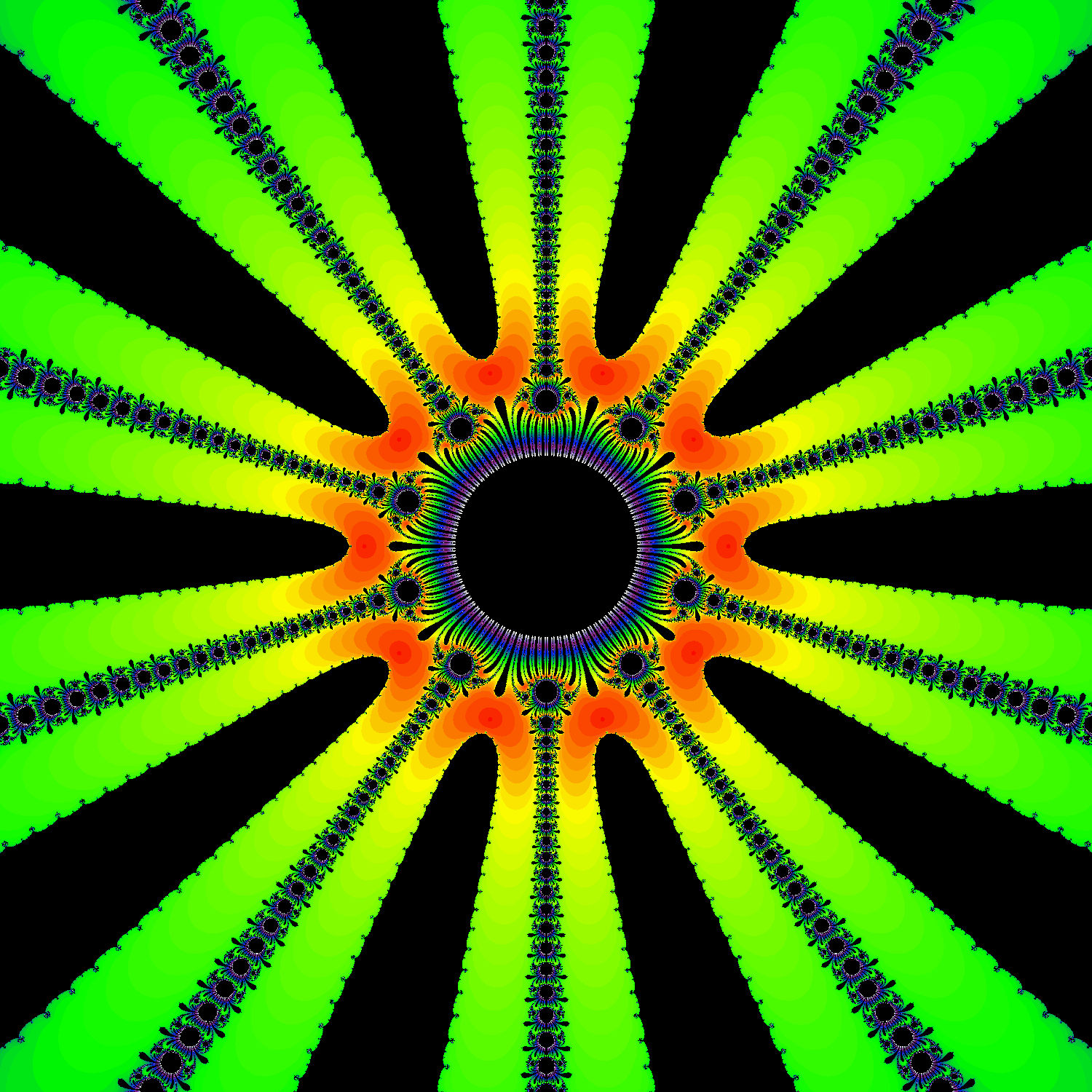};
    \end{axis}
  \end{tikzpicture}
    }
  \caption{Dynamical planes of $O_{n,\alpha}$ for $\alpha=\frac{2n-1}{n-1}$.\label{fig:dynamcuerpo}}
\end{figure}

In Figure~\ref{fig:dynamcuerpo} we show the dynamical planes of  $O_{n,\alpha}$ for $n\in\{3,10\}$ and $\alpha=\frac{2n-1}{n-1}$, which correspond to the superattracting case in Proposition \ref{caracterExt}. We observe with a scaling from red to green to purple and to grey the points which converge to the the roots in 75 iterates, that is, the iterate $w$ satisfies $|w-\xi|<10^{-4}$ for some $n$th-root of the unity $\xi$. We observe in black the points which have  not converged to the roots after 75 iterates. The basins of attraction of the strange fixed points appear as black fingers coming from infinity. Notice that the strange fixed points, and the corresponding basins of attraction,  are symmetrically located. We refer to Section~\ref{sec_Numerico_dynam} for a more detailed explanation on how  the images are produced.

\section{Simple connectivity of Fatou components} \label{sec_Conectividad}

Numerical experiments show that the speed of convergence to the roots may be
slower when the basins of attraction are not simply connected, that is,
there are connected components of the basins of attraction which have holes.
In this section we give a dynamical condition which characterizes whether the
immediate basins of attraction for the operators $O_{n,\alpha}$ are simply
connected. We also prove that the operators $O_{n,\alpha}$ can not have
Herman rings, that is, doubly connected rotation domains. The existence of
such domains would lead to a positive measure set of initial conditions not
converging to the roots. For completeness, we end this section showing how
the previous results lead to a dynamical characterization of the
connectivity of the corresponding Julia sets.

\subsection{Simple connectivity of immediate basins of attraction of superattracting fixed points}

In this subsection we give  a characterization of the simple connectivity of the immediate basin of
attraction of a supperattracting fixed point in the case that it can contain
at most one critical point besides the supperattracting fixed point itself. This result is stated for
a general rational map $f$. For simplicity, we assume that it has a
supperattracting fixed point at $z=0$. We denote by $A_f(0)$ its basin of
attraction and by $A^*_f(0)$ its immediate basin of attraction, that is, the connected component of $A_f(0)$ which contains $z=0$.

\begin{proposition}
\label{propconn} Let $f:\widehat{\mathbb{C}}\rightarrow\widehat{\mathbb{C}}$
be a rational map and let $z=0$ be a superattracting fixed point of $f$.
Assume that $A_f(0)$ contains at most one critical point other than $z=0$.
Then, exactly one of the following statements  holds.

\begin{enumerate}
\item The set $A_f^*(0)$ contains no critical point other than $z=0$. Then,
 $A_f^*(0)$ is simply connected.

\item The set $A_f^*(0)$ contains a critical point $c\neq 0$ and a preimage
$z_0$ of $z=0$, $z_0\neq 0$. Then, $A_f^*(0)$ is simply connected.

\item The set $A_f^*(0)$ contains a critical point $c\neq 0$ and no preimage
of $z=0$ other than $z=0$ itself. Then, $A_f^*(0)$ is multiply connected.
\end{enumerate}
\end{proposition}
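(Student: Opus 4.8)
The plan is to analyse the restriction $f\colon A_f^*(0)\to A_f^*(0)$, which is a proper holomorphic map of some degree $d$ (the restriction of a rational map to an invariant Fatou component is always proper), and to read off the connectivity of $A_f^*(0)$ from the Riemann--Hurwitz formula. Two quantities govern everything. The degree $d$ equals the number of preimages of $0$ lying in $A_f^*(0)$, counted with multiplicity: the fixed point $0$ contributes its local degree $\delta\ge 2$, and every further preimage contributes its own multiplicity. The total ramification $R$ is supported on the critical points inside $A_f^*(0)$, namely $0$ (excess $\delta-1$) and, in statements 2 and 3, the single extra critical point $c$ (excess $k-1\ge 1$, where $k$ is its local degree). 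Since $0$ is superattracting, $R>0$ and $d\ge2$ always. For statement 1 I would invoke B\"ottcher's theorem in its global form: as $A_f^*(0)$ carries no critical point other than $0$, the B\"ottcher coordinate extends to a conformal isomorphism $A_f^*(0)\to\dis$ conjugating $f$ to $w\mapsto w^{\delta}$; in particular $A_f^*(0)$ is simply connected, and $0$ is its only preimage of $0$, so $d=\delta$.

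For the remaining statements the engine is the following dichotomy. If $A_f^*(0)$ were finitely connected, of connectivity $m$, then Riemann--Hurwitz gives $(2-m)(d-1)=R$; since $R>0$ and $d\ge 2$ this forces $m=1$. Hence $A_f^*(0)$ is \emph{either} simply connected (and then $R=d-1$) \emph{or} infinitely connected, with no intermediate finite connectivity. Statement 3 is then immediate: there $0$ is the only preimage of $0$, so $d=\delta$ and $d-1=\delta-1$, whereas $R=(\delta-1)+(k-1)>\delta-1$. Thus $R\neq d-1$, so $A_f^*(0)$ cannot be simply connected, and by the dichotomy it is multiply (in fact infinitely) connected. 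This mirrors the basin of infinity of a polynomial with an escaping critical point, whose Julia set is a Cantor set.

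For statement 2 the presence of a preimage $z_0\neq 0$ of $0$ gives $d\ge \delta+1$, and I would prove that the extra preimages account for \emph{exactly} the excess $k-1$ of $c$, i.e. $d=\delta+(k-1)$, whence $R=d-1$ and $A_f^*(0)$ is simply connected. Concretely I would exhaust $A_f^*(0)$ by the pulled-back B\"ottcher disks $U_n$ (the component containing $0$ of the $f^{-n}$-preimage of a small equipotential disk) and track $\chi(U_n)$ under $f\colon U_{n+1}\to U_n$. While the level stays below the critical value the $U_n$ grow as topological disks; when the level crosses the critical value the equipotential through $c$ becomes a figure-eight, and the loop not containing $0$ is filled precisely by the preimage $z_0$, so the two loops merge into a topological disk and $U_{n+1}$ stays simply connected. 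Passing to the union yields simple connectivity of $A_f^*(0)$.

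The hard part is exactly statement 2: making rigorous that the extra preimage $z_0$ compensates precisely the ramification produced by $c$ (equivalently that $d=\delta+k-1$), so that no hole survives the pull-back. This is where the hypothesis that the whole basin $A_f(0)$ carries at most one free critical point is essential—it rules out uncontrolled branching in the successive preimages and keeps the figure-eight bookkeeping finite and exact. By contrast, the direction used in statement 3 (no extra preimage forces a genuine hole) is a one-line Riemann--Hurwitz count, and statement 1 is a direct appeal to B\"ottcher's theorem.
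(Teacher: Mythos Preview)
Your treatment of statement~1 matches the paper's (global B\"ottcher), and your Riemann--Hurwitz count for statement~3 is correct and in fact cleaner than the paper's route: the paper obtains statement~3 by first analysing the figure-eight geometry and then running a somewhat delicate annuli argument to show that the ``bad'' configuration forces the absence of an extra preimage, whereas you get multiple connectivity in one line from $d=\delta$, $R=(\delta-1)+(k-1)>d-1$, and the finite-connectivity dichotomy.

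The gap is in statement~2. Your sketch asserts that when the equipotential through $c$ becomes a figure-eight, ``the loop not containing $0$ is filled precisely by the preimage $z_0$''. But this presupposes that the figure-eight is in the side-by-side configuration (so that there \emph{is} a loop not containing $0$, and it maps onto $V$). There is a second configuration in which $\partial U$ equals the whole figure-eight (a pinched annulus); in that case both complementary regions map over $\widehat{\mathbb C}\setminus V$ and contain Julia set, and nothing you have written rules out an extra preimage of $0$ sitting inside $A_f^*(0)$ in one of those regions. Your RH argument for statement~3 gives only the implication ``no extra preimage $\Rightarrow$ multiply connected''; for statement~2 you need the \emph{opposite} implication ``multiply connected $\Rightarrow$ no extra preimage'' (equivalently, ``extra preimage $\Rightarrow$ good configuration''). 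Saying you will prove $d=\delta+k-1$ does not help: by your own dichotomy that equation is \emph{equivalent} to simple connectivity, so it is the conclusion, not an independent step.

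The paper fills exactly this gap with its annuli argument: assuming the bad configuration, it builds nested doubly connected pullbacks $\mathcal A_n'$ separating a putative $z_0$ from $0$ and derives a contradiction with $f^N(z_0)=0$. Something of this sort (or an equivalent Green's-function/level-set argument showing that in the infinitely connected case the sublevel sets can never swallow an extra preimage of $0$) is needed to complete your proof of statement~2.
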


The proof of Proposition~\ref{propconn} uses the Riemann-Hurwitz formula
(see \cite{beardon}). Given a proper map $f:U\rightarrow V$, this formula
relates the connectivities of $U$ and $V$. The connectivity of an open set
$U\subset \widehat{\mathbb{C}}$ is given by the number of connected
components of $\partial U$.
A map $f$ is proper if the
preimages of compact sets are compact sets. Given a rational map $f$, an open
set $V$ and a connected component $U$ of $f^{-1}(V)$, the map $f:U\rightarrow V$ is proper.

\begin{theorem}[Riemann-Hurwitz Formula]
\label{riemannhurwitz} Let $U$ and $V$ be two connected domains of
$\widehat{\mathbb{C}}$ of finite connectivity $m_{U}$ and $m_{V}$ and let
$f:U\rightarrow V$ be a degree $k$ proper map branched over $c$ critical
points counted with multiplicity. Then
\begin{equation*}
m_{U}-2=k(m_{V}-2)+c.
\end{equation*}
\end{theorem}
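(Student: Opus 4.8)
The plan is to recast the stated identity as the Euler-characteristic form of Riemann--Hurwitz and to prove that form by lifting a triangulation through $f$. The starting point is the observation that a domain $W\subset\wcom$ of finite connectivity $m_W$, viewed as a surface with boundary, has genus zero and $m_W$ boundary components, so its Euler characteristic is $\chi(W)=2-m_W$ (equivalently, $W$ deformation retracts onto a wedge of $m_W-1$ circles). Under this dictionary the desired equality $m_U-2=k(m_V-2)+c$ is algebraically equivalent to
\begin{equation*}
\chi(U)=k\,\chi(V)-c,
\end{equation*}
so it is enough to establish this relation between Euler characteristics.

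First I would record the branching data. Since $f$ is a proper holomorphic map of degree $k$, its critical points are isolated and, by properness, cannot accumulate on $\partial U$; hence they are finite in number, the set $B\subset V$ of critical values is finite, and the total branching is $c=\sum_p(\mathrm{mult}_p f-1)$, the sum taken over critical points. Away from $B$ the map $f$ is an unbranched covering of degree $k$. I would then choose a finite triangulation of $V$ whose vertex set contains $B$, say with $\mathcal V$ vertices, $E$ edges and $F$ faces, and pull it back by $f$. Over each open face and each open edge the covering is trivial of degree $k$, so they lift to exactly $kF$ faces and $kE$ edges; over a vertex $v$ the number of preimages is $k$ minus the local ramification of $f$ above $v$, so the lifted vertices number $k\mathcal V-c$. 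Hence
\begin{equation*}
\chi(U)=(k\mathcal V-c)-kE+kF=k(\mathcal V-E+F)-c=k\,\chi(V)-c,
\end{equation*}
and substituting $\chi(U)=2-m_U$ and $\chi(V)=2-m_V$ yields the claim.

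The hard part is that $U$ and $V$ are open domains, not compact surfaces, so they cannot be triangulated with finitely many simplices and their frontiers in $\wcom$ need not be smooth curves. I would circumvent this by passing to a compact core: choose a smoothly bounded compact subsurface $K\subset V$ that is a deformation retract of $V$ (so $\chi(K)=\chi(V)=2-m_V$) and whose interior contains the finite set $B$. Properness of $f$ makes $K'=f^{-1}(K)$ a compact subsurface of $U$, and since $f$ is an honest covering over the collar $V\setminus K$, the preimage $U\setminus K'$ is a union of collars over the components of $\partial K'$; pushing these in shows that $K'$ is a deformation retract of $U$, whence $\chi(K')=\chi(U)=2-m_U$. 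The triangulation-lifting count above is then performed honestly on the compact proper map $f:K'\to K$, with no branching over $\partial K$ because every critical value lies in the interior of $K$, giving $\chi(K')=k\,\chi(K)-c$ and therefore the theorem.

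The two technical points that require care, both following from properness together with the covering behaviour of $f$ away from the finitely many critical values, are the existence of a deformation-retract core $K$ with smooth boundary and $B\subset\mathrm{int}(K)$, and the fact that taking $f$-preimages of this core preserves the homotopy type. Both are standard for proper branched covers of finitely connected planar domains, and with them in hand the Euler-characteristic computation completes the proof.
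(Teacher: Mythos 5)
The paper offers no proof of this statement to compare against: Theorem~\ref{riemannhurwitz} is quoted there as a classical result, with a pointer to \cite{beardon}, and is only \emph{used} (in Proposition~\ref{propconn} and Proposition~\ref{preperiodicsimply}). So yours is the only proof on the table, and it is the standard and correct one: converting connectivity to Euler characteristic via $\chi(W)=2-m_W$, reducing the claim to $\chi(U)=k\,\chi(V)-c$, establishing that identity by lifting a triangulation of the target whose vertex set contains the critical values (faces and edges lift $k$-to-one because they are simply connected and avoid the branch set; vertices lose exactly the total ramification, giving $k\mathcal V-c$), and dealing with non-compactness by a compact core $K\subset V$ with smooth boundary and all critical values in $\mathrm{int}(K)$, so that $f$ is an honest covering over the collar region $V\setminus K$, each component of $f^{-1}(V\setminus K)$ is a half-open annulus, and $f^{-1}(K)$ is a deformation retract of $U$ carrying all of the branching. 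This is exactly the textbook route, executed with the right technical safeguards.

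One justification you give is too quick, although it does not ultimately damage the proof: finiteness of the critical set does not follow from the bare assertion that ``by properness the critical points cannot accumulate on $\partial U$.'' Properness as such is compatible with a discrete critical set accumulating on $\partial U$: every compact $L\subset V$ pulls back to a compact set meeting only finitely many critical points, so the critical values could still form an infinite closed discrete subset of $V$ marching out to $\partial V$. Two honest fixes are available. First, the theorem's hypothesis already posits that $f$ is branched over $c$ critical points counted with multiplicity, so finiteness may simply be read off from the statement. Second, if you want to derive it, you need planarity and finite connectivity, not just properness: exhaust $V$ by compact cores $K_j$ and note $\chi\bigl(f^{-1}(K_j)\bigr)=k(2-m_V)-c_j$, while $f^{-1}(K_j)$ is a genus-zero compact surface whose boundary circles number at most $k\,m_V$ (each end of $V$ has total degree at most $k$ over it), so $\chi\bigl(f^{-1}(K_j)\bigr)\geq 2-k\,m_V$ is bounded below and the branching counts $c_j$ cannot grow without bound. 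With that point either assumed from the hypothesis or patched as above, your argument is complete.
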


\begin{proof}[Proof of Proposition~\ref{propconn}]

We consider the B\"ottcher coordinate of the superattracting fixed point $z=0$ (see \cite[Theorem 9.1]{Milnor}).
The B\"ottcher coordinate is a conformal map $\phi:U\rightarrow\dis_r$,
 where $U$ is a simply connected neighbourhood of $z=0$, $0<r\leq1$, and $\dis_r$ is the open disk
 of center 0 and radius $r$. This map conjugates $f$ with $z\rightarrow z^d$, where $d$
 is the local degree of the supearattracting fixed point $z=0$. More specifically,
  for all $z$ with $|z|<r$ we have $\phi \circ f \circ \phi^{-}(z)=z^n$. In particular, $\phi(0)=0$.

We now assume that $U$ is the maximal domain of definition of the B\"ottcher coordinate and, hence, $r$ is maximal.
If $A^*_f(0)$ does not contain any extra critical point, then $r=1$ and $U=A^*_f(0)$ (see \cite[Theorem 9.3]{Milnor}).
 In particular, $A^*_f(0)$ is simply connected. On the other hand, if $A^*_f(0)$ contains an extra critical point $c\neq 0$,
 then $r<1$, $U \subseteq A^*_f(0)$, and $c\in\partial U$ (see \cite[Theorem 9.3]{Milnor}).
 Let $V=f(U)$. Then, $ \gamma_0:= \partial V$ is a simple closed curve
 (it is the preimage under $\phi$ of the circle or radius $r^d$).
  Denote by $\gamma_1$ the connected component of $f^{-1}(\gamma_0)$ which contains $\partial U$.
  Then, $\gamma_1$ is the union of two simple closed curves which intersect at the critical point $c$,
  say  $\gamma_1^1$, and $\gamma_1^2$. Furthermore, either $\partial U= \gamma_1^1$ or
  $\partial U= \gamma_1^1 \cup \gamma_1^2=\gamma_1$ (see Figure~\ref{fig:bottcher1}).

\begin{figure}[hbt!]
\centerline{
\setlength{\unitlength}{13cm}
 \begin{picture}(1,0.34267876)
    \put(0,0){\includegraphics[width=\unitlength,page=1]{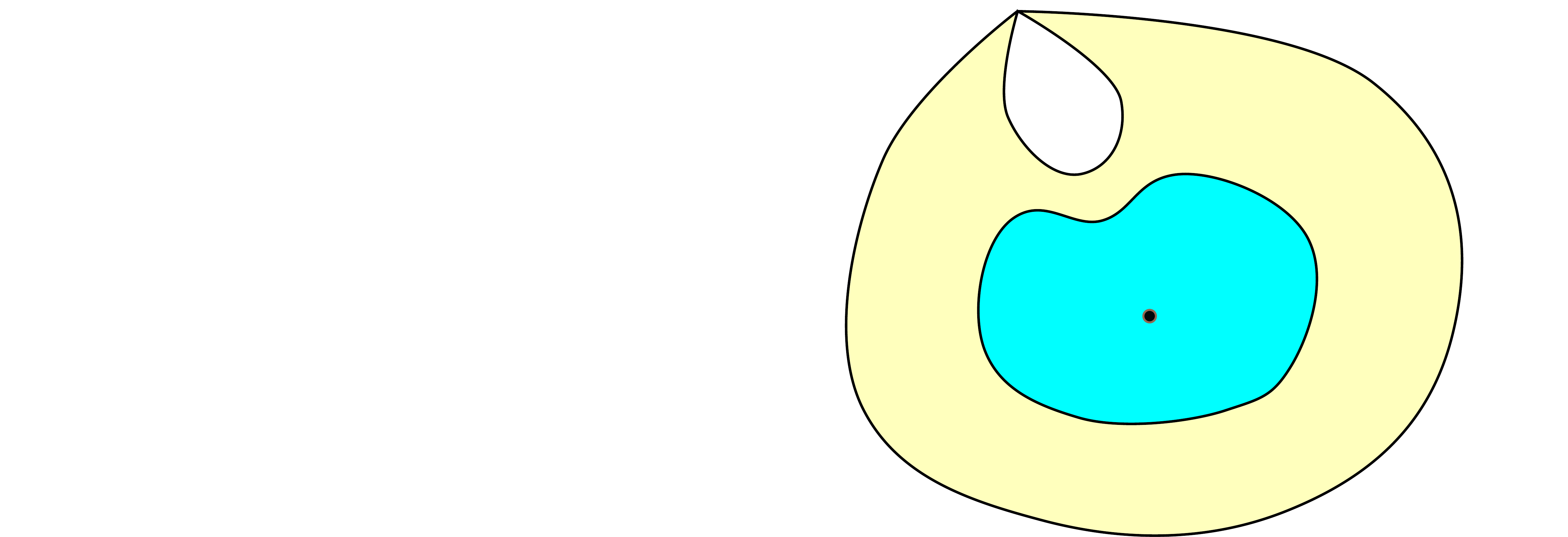}}
    \put(0.71094174,0.14){\color[rgb]{0,0,0}\makebox(0,0)[lb]{\smash{$0$}}}
    \put(0.84296273,0.15288058){\color[rgb]{0,0,0}\makebox(0,0)[lb]{\smash{$\gamma_0$}}}
    \put(0.72015251,0.28183133){\color[rgb]{0,0,0}\makebox(0,0)[lb]{\smash{$\gamma_1^2$}}}
    \put(0.94,0.2){\color[rgb]{0,0,0}\makebox(0,0)[lb]{\smash{$\gamma_1^1$}}}
    \put(0,0){\includegraphics[width=\unitlength,page=2]{maxdombottcher.pdf}}
    \put(0.04623146,0.268){\color[rgb]{0,0,0}\makebox(0,0)[lb]{\smash{$z_0$}}}
    \put(0.0846096,0.315){\color[rgb]{0,0,0}\makebox(0,0)[lb]{\smash{$\gamma_1^2$}}}
    \put(0.37,0.27){\color[rgb]{0,0,0}\makebox(0,0)[lb]{\smash{$\gamma_1^1$}}}
    \put(0.26,0.14827521){\color[rgb]{0,0,0}\makebox(0,0)[lb]{\smash{$0$}}}
    \put(0.33,0.18051289){\color[rgb]{0,0,0}\makebox(0,0)[lb]{\smash{$\gamma_0$}}}
    \put(0.20127933,0.25573415){\color[rgb]{0,0,0}\makebox(0,0)[lb]{\smash{$U$}}}
    \put(0.19820907,0.16669674){\color[rgb]{0,0,0}\makebox(0,0)[lb]{\smash{$V$}}}
    \put(0.77,0.17){\color[rgb]{0,0,0}\makebox(0,0)[lb]{\smash{$V$}}}
    \put(0.57,0.17130213){\color[rgb]{0,0,0}\makebox(0,0)[lb]{\smash{$U$}}}
  \end{picture}
}
\caption{The two possibilities for the configuration of $\gamma_1=\gamma_1^1\cup\gamma_1^2$. \label{fig:bottcher1}}
\end{figure}

Assume that $\partial U= \gamma_1^1$ and denote by $U'$ the connected component of $\wcom\setminus \gamma_1^2$ which does not contain $z=0$. Since $f(\gamma_1^1)= f(\gamma_1^2)=\gamma_0$, it follows that $f(U')=f(U)=V$. In particular $U'$ contains a preimage $z_0$ of $0$. Moreover, $\overline{U\cup U'}\subset A^*_f(0)$ and we can take a simply connected neighbourhood $\mathcal{U}\subset A^*_f(0)$ of $\overline{U\cup U'}$. Denote by $\mathcal{W}$ the connected component of $f^{-1}(\mathcal{U})$ that contains $0$.
Since $f$ does not have any critical point in $A^*_f(0)$ other than $c$ and $0$, it follows from the Riemann-Hurwitz formula (Theorem~\ref{riemannhurwitz}) that $f|_{\mathcal{W}}:\mathcal{W}\rightarrow \mathcal{U}$ has degree $d+1$ and $\mathcal{W}$ is simply connected.
 Notice that the degree of $f|_{\mathcal{W}}$ is at least $d+1$ since $z=0$ is a preimage of multiplicity $d$ of itself and $\mathcal{W}$ contains an extra preimage $z_0$ of $0$. Repeating the process, we obtain a sequence of simply connected domains $U=U_0\subset U_1\subset\cdots \subset U_n \subset \cdots$, where  $U_n$ is a connected component of $f^{-1}(U_{n-1})$  and $\overline{U_{n-1}}\subset U_n$. Moreover, $A^*_f(0)=\bigcup_{n=0}^{\infty} U_n$, from which we conclude that $A^*_f(0)$ is a simply connected set containing a preimage $z_0$ of $z=0$ and a critical point $c\neq 0$.

  Now assume that $\partial U= \gamma_1^1 \cup \gamma_1^2=\gamma_1$. Then, the connected components $W_1$ and $W_2$ of $\wcom\setminus \gamma_1^1$ and $\wcom\setminus \gamma_1^2$, respectively, which do not contain $z=0$ are mapped under $f$ onto open sets which contain $\wcom\setminus V$. In particular, both $W_1$ and $W_2$ contain part of the Julia set $\mathcal{J}(f)$ and, therefore, $A^*_f(0)$ is not simply connected. To finish the proof we have to see that, in this last case, $A^*_f(0)$ does not contain an extra preimage $z_0$ of $z=0$. Assume that this $z_0$ exists. Then,  $z_0\in W_1\cup W_2$.
  Denote by $\mathcal{A}_0$ the doubly connected set bounded by $\gamma_0$ and $\gamma_1$. Define recursively $\mathcal{A}_n$ as the union of all connected components of $f^{-1}(\mathcal{A}_{n-1})$ whose boundary has non-empty intersection with $\partial \mathcal{A}_{n-1}$. In particular, $\mathcal{A}_{1}$ consists of 2 connected components (see Figure~\ref{fig:bottcher2}). Since $A^*_f(0)$ has no critical point other than $c$ and $z=0$, it follows from the Riemann-Hurwitz formula (Theorem~\ref{riemannhurwitz}) that all connected components of $\mathcal{A}_{n}$ are doubly connected.

\begin{figure}[hbt!]
\centerline{
\setlength{\unitlength}{7cm}
\begin{picture}(1,0.79481965)
    \put(0,0){\includegraphics[width=\unitlength,page=1]{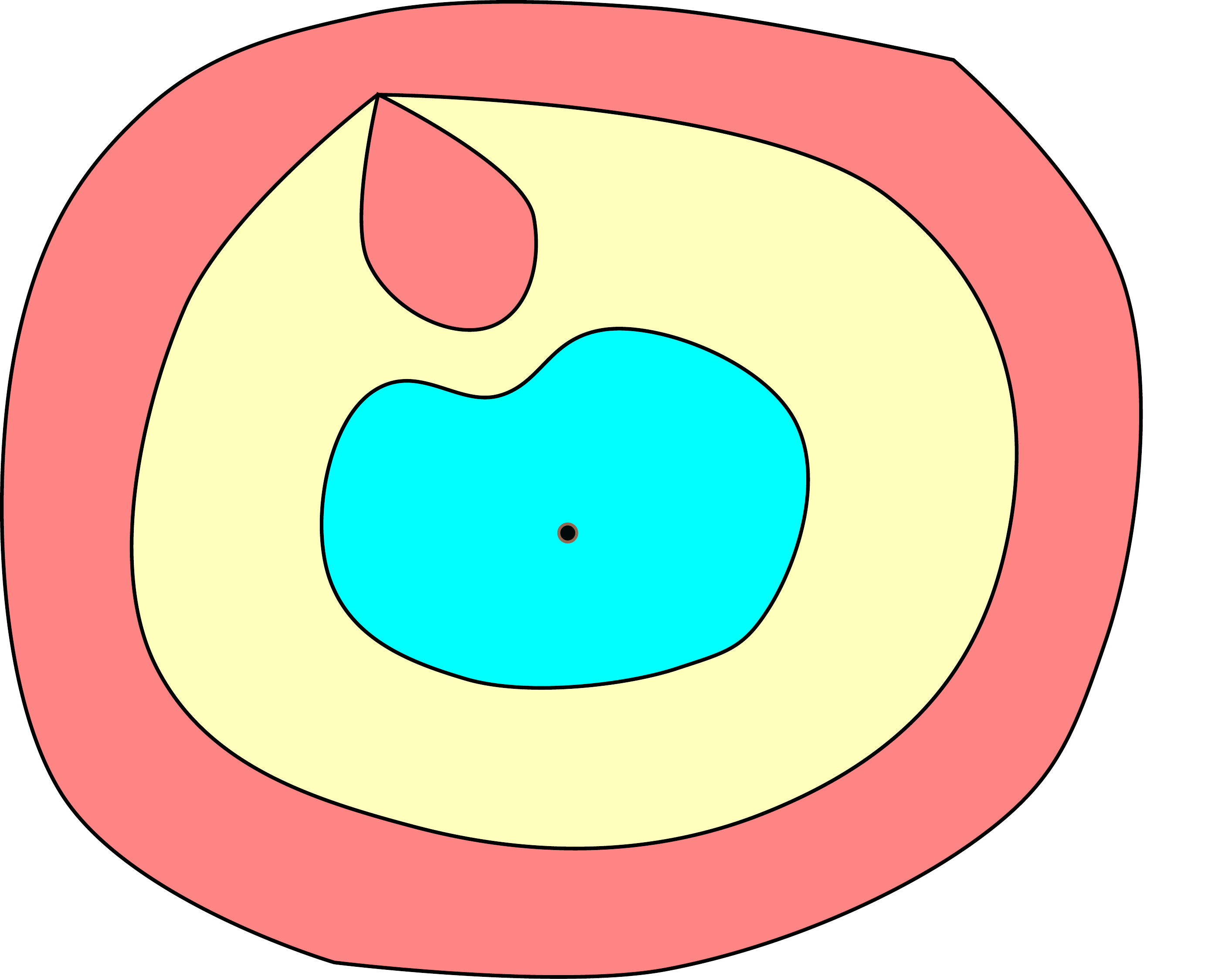}}
    \put(0.59,0.37551246){\color[rgb]{0,0,0}\makebox(0,0)[lb]{\smash{$\gamma_0$}}}
    \put(0.23,0.57){\color[rgb]{0,0,0}\makebox(0,0)[lb]{\smash{$\gamma_1^2$}}}
    \put(0.11694614,0.3811281){\color[rgb]{0,0,0}\makebox(0,0)[lb]{\smash{$\gamma_1^1$}}}
    \put(0.7,0.48501848){\color[rgb]{0,0,0}\makebox(0,0)[lb]{\smash{$\mathcal{A}_0$}}}
    \put(0,0){\includegraphics[width=\unitlength,page=2]{preimanillos.pdf}}
    \put(0.174,0.68){\color[rgb]{0,0,0}\makebox(0,0)[lb]{\smash{$\mathcal{A}_1$}}}
    \put(0.42,0.37){\color[rgb]{0,0,0}\makebox(0,0)[lb]{\smash{$0$}}}
    \put(0,0){\includegraphics[width=\unitlength,page=3]{preimanillos.pdf}}
    \put(0.32472674,0.60575586){\color[rgb]{0,0,0}\makebox(0,0)[lb]{\smash{$z_0$}}}
    \put(0.41,0.57){\color[rgb]{0,0,0}\makebox(0,0)[lb]{\smash{$\mathcal{A}_1'$}}}
  \end{picture}
}
\caption{Configuration of $\mathcal{A}_0$ and its preimage set $\mathcal{A}_1$. \label{fig:bottcher2}}
\end{figure}

  For $n$ small enough we can define $\mathcal{A}'_n$ as the connected component of $\mathcal{A}_n$ such that both connected components of $\partial \mathcal{A}'_n$ separate $z_0$ from $z=0$ (see Figure~\ref{fig:bottcher2}). If this component is well defined, it is unique. By construction, these components satisfy $\partial \mathcal{A}'_n\cap\partial \mathcal{A}'_{n-1}\neq \emptyset$. Indeed, there is a connected component $\zeta_n$ of $\partial \mathcal{A}'_n$ which is a simple closed curve and satisfies $\zeta_n\subset \partial \mathcal{A}'_{n-1}$ (see Figure~\ref{fig:bottcher2}).
  Since the sets $\mathcal{A}_n$ accumulate on $\partial A^*_f(0)$, the sets $\mathcal{A}'_n$ are not well defined for $n$ large enough. Let $N$ be the smallest such $n$.
  Then, there exists a unique connected component $\mathcal{A}'_N$ of $\mathcal{A}_N$ such that one connected component of $\partial \mathcal{A}'$ separates $z_0$ from $z=0$ while the other connected component of $\partial \mathcal{A}'_N$ does not separate them. As before, there is a connected component $\zeta_N$ of $\partial \mathcal{A}'_N$ which is a simple closed curve and satisfies $\zeta_N\subset \partial\mathcal{A}'_{N-1}$. This component $\zeta_N$ is precisely the one that separates $z_0$ from $z=0$. It follows that $z_0$ belongs to the doubly connected domain $\mathcal{A}'_N$. This is a contradiction since $f^N(\mathcal{A}'_N)\subset \mathcal{A}_0$ and $f^N(z_0)=0$ ($f(z_0)=0$).
\end{proof}

\begin{remark}
The immediate basin of attraction of an attracting or parabolic cycle can
only have connectivity 1 or $\infty$ (see \cite{beardon}). Therefore, when
it is multiply connected it has connectivity $\infty$.
\end{remark}

\subsection{Application to the Chebyshev-Halley family: Connectivity of the Julia set}

In this subsection we apply Proposition~\ref{propconn} to the operators $O_{n,\alpha}$.
All the proofs of this section rely on the symmetry property of the
operators $O_{n,\alpha}$ introduced in Lemma~\ref{conjucacionn}.  This lemma
tells us that if $\xi$ is an $n$th-root of the unity and $I_{\xi}(z)=\xi z$,
then $I_{\xi}$ conjugates $O_{n,\alpha}$ with itself. By using this
symmetry, any property which holds for the basin of attraction  of the root $1$ of $z^n-1$ also holds for the basins of attractions
of all the other $n$th-roots of the unity.

Recall that the roots of the unity are
superattracting fixed points of the operators $O_{n,\alpha}$, which
corresponds to the roots of the polynomial $z^n-1$. For simplicity, all
results regarding the basins of attraction of the roots are stated for $z=1$.
We denote by $A_{n,\alpha}(1)$ the basin of attraction of
$z=1$ under $O_{n,\alpha}$ and by $A_{n,\alpha}^*(1)$ its immediate basin
of attraction, that is, the connected component of $A_{n,\alpha}(1)$ which
contains $z=1$.

\begin{lemma}
\label{onecrit} For all $\alpha\in\mathbb{C}$ and $n\geq2$, $A_{n,\alpha}(1)$
contains at most one critical point other than $z=1$.
\end{lemma}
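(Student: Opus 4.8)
The plan is to list all critical points of $O_{n,\alpha}$ and decide, one family at a time, which of them can possibly lie in $A_{n,\alpha}(1)$. From the discussion after \eqref{eq:O'}, besides the $n$th-roots of the unity (each a critical point of multiplicity $2$) and the point $z=0$ (of multiplicity $n-2$), the operator has exactly the $n$ free critical points $c_\xi=\xi c_1$ of \eqref{criticalpointsn}, indexed by the $n$th-roots of the unity $\xi$. I would handle the roots of the unity, the point $z=0$, and the free critical points in turn.

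The first two families are immediate. Each $n$th-root of the unity is a superattracting fixed point and therefore lies in its own immediate basin; since the basins $A_{n,\alpha}(\xi)$ of distinct roots are pairwise disjoint, the only root of the unity in $A_{n,\alpha}(1)$ is $z=1$ itself, which the statement excludes. For $z=0$, I would use that the factor $1/z^{n-1}$ in \eqref{Opn} forces $O_{n,\alpha}(0)=\infty$, with $z=\infty$ a fixed point of $O_{n,\alpha}$. As $\infty\neq 1$, the orbit of $z=0$ is eventually constant equal to $\infty$ and never converges to $z=1$; hence $z=0\notin A_{n,\alpha}(1)$.

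The crux is the free critical points, and here I would exploit the symmetry of Lemma~\ref{conjucacionn}. From $I_\xi\circ O_{n,\alpha}=O_{n,\alpha}\circ I_\xi$ one gets, by induction, $O_{n,\alpha}^k(\xi z)=\xi\,O_{n,\alpha}^k(z)$ for all $k$, so $I_\xi(z)=\xi z$ maps $A_{n,\alpha}(1)$ bijectively onto $A_{n,\alpha}(\xi)$. Writing $c_\xi=I_\xi(c_1)$, this gives the equivalence $c_\xi\in A_{n,\alpha}(1)$ if and only if $c_1\in I_\xi^{-1}(A_{n,\alpha}(1))=A_{n,\alpha}(\xi^{-1})$. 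Since $c_1$ is a single point, it belongs to at most one of the pairwise disjoint basins; thus $c_1\in A_{n,\alpha}(\xi^{-1})$ can hold for at most one root of the unity $\xi$, and consequently at most one of the $n$ free critical points lies in $A_{n,\alpha}(1)$.

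Combining the three cases proves the bound. The step I expect to require the most care is the symmetry bookkeeping in the last paragraph: one must make sure that the free critical point $c_\xi$ is paired with the basin $A_{n,\alpha}(\xi^{-1})$ (and not $A_{n,\alpha}(\xi)$), and that disjointness is applied to the single orbit of $c_1$ rather than naively across all $c_\xi$. I would also check the degenerate parameters where the bound is only easier to satisfy---e.g.\ $\alpha\in\{0,\tfrac12\}$, for which \eqref{criticalpointsn} gives $c_\xi=0$ so that the free critical points collapse onto $z=0$, and the parameters of Section~\ref{sec_bifurcation} where the degree of $O_{n,\alpha}$ drops---none of which can increase the number of critical points in $A_{n,\alpha}(1)$.
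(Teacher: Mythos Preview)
Your argument is correct and follows essentially the same route as the paper: both proofs use the symmetry of Lemma~\ref{conjucacionn} together with the disjointness of the basins $A_{n,\alpha}(\xi)$ to conclude that at most one of the $n$ free critical points can land in $A_{n,\alpha}(1)$. The paper's proof is terser---it jumps straight to the free critical points and counts---whereas you explicitly dispose of the other roots of unity and of $z=0$ first, and phrase the symmetry step as the equivalence $c_\xi\in A_{n,\alpha}(1)\Leftrightarrow c_1\in A_{n,\alpha}(\xi^{-1})$; these are cosmetic differences, not a different method.
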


\proof
The operators $O_{n,\alpha}$ have only $n$ free critical points given by $%
c_{n,\alpha,\xi}$ (see Equation (\ref{criticalpointsn})). Given an $n$th-root of the unity $\xi$, it follows from Lemma~\ref{conjucacionn} that the map $I_{\xi}(z)$
conjugates $O_{n,\alpha}$ with itself. In particular, if $A_{n,\alpha}^*(1)$
contains a critical point $c$, then the critical point $\xi\cdot c$ belongs
to $A_{n,\alpha}^*(\xi\cdot 1)$. Therefore, for all $j\in\{0,1,\cdots,n-1\}$, the immediate basin of attraction $A_{n,\alpha}^*(\xi^j)$ of $\xi^j\cdot 1$ contains the critical point $\xi^j\cdot c$. Since there are only $n$
different free critical points and for all $j,k\in \{0,1,\cdots,n-1\}$, $%
j\neq k$, we have $\xi^j\neq\xi^k$, we can conclude that $A_{n,\alpha}^*(1)$
contains at most one critical point. \endproof

The next result follows directly from Lemma~\ref{onecrit} and Proposition~\ref{propconn}. We use the fact that $A_{n,\alpha}(1)$ can contain at most one critical point (Lemma~\ref{onecrit}) to apply Proposition~\ref{propconn}, obtaining a characterization of the simple connectivity of the basins of attraction of the roots for the operator $O_{n,\alpha}$.

\begin{corollary}
\label{rootsimply} For fixed $n\geq2$ and $\alpha\in\mathbb{C}$, the
immediate basins of attraction of the roots of $z^n-1$ under $O_{n,\alpha}$
are multiply connected if and only if $A_{n,\alpha}^*(1)$ contains a
critical point $c\neq 1$ and no preimage of $z=1$ other than $z=1$ itself.
\end{corollary}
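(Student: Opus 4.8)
The plan is to derive Corollary~\ref{rootsimply} as a direct application of Proposition~\ref{propconn} to the operator $O_{n,\alpha}$, using Lemma~\ref{onecrit} to verify its hypothesis. The key observation is that Lemma~\ref{onecrit} establishes exactly what Proposition~\ref{propconn} requires: the basin $A_{n,\alpha}(1)$ contains at most one critical point other than $z=1$ itself. Since $z=1$ is a superattracting fixed point of $O_{n,\alpha}$ (being an $n$th-root of the unity, hence a zero of $z^n-1$), the proposition applies verbatim with $f=O_{n,\alpha}$ and the superattracting fixed point relocated from $z=0$ to $z=1$.

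First I would invoke Lemma~\ref{onecrit} to conclude that $A_{n,\alpha}(1)$ contains at most one critical point other than $z=1$, so that the hypothesis of Proposition~\ref{propconn} is satisfied. This licenses the trichotomy of statements (1)--(3) in the proposition. Statements (1) and (2) both conclude that $A_{n,\alpha}^*(1)$ is \emph{simply connected}, while statement (3) concludes it is \emph{multiply connected}, and exactly one of the three holds. Therefore $A_{n,\alpha}^*(1)$ is multiply connected if and only if we are in case (3), namely that $A_{n,\alpha}^*(1)$ contains a critical point $c\neq 1$ and no preimage of $z=1$ other than $z=1$ itself. This establishes the characterization for the immediate basin of the single root $z=1$.

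To promote the statement from the single root $z=1$ to \emph{all} $n$th-roots of the unity, I would then appeal to the symmetry provided by Lemma~\ref{conjucacionn}. For any $n$th-root $\xi$, the map $I_{\xi}(z)=\xi z$ conjugates $O_{n,\alpha}$ with itself, so it maps $A_{n,\alpha}^*(1)$ homeomorphically onto $A_{n,\alpha}^*(\xi)$ and preserves both the connectivity of these sets and the property of containing (or not containing) critical points and nontrivial preimages of the respective roots. Consequently the immediate basin of any root is multiply connected if and only if the immediate basin of $z=1$ is, and the condition in case (3) for $z=1$ is equivalent to the corresponding condition for each $\xi$. Thus the characterization stated for $z=1$ captures the behaviour of all roots simultaneously, as the corollary asserts.

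I do not expect any genuine obstacle here, since all the substantive work is carried by the two results being cited. The only point requiring mild care is the translation between the normalization $z=0$ used in Proposition~\ref{propconn} and the actual superattracting fixed point $z=1$; this is harmless because the Böttcher-coordinate argument underlying the proposition is local and invariant under a Möbius change of coordinate sending $1$ to $0$, and the Riemann--Hurwitz bookkeeping is unaffected. The mild subtlety is to phrase the ``if and only if'' cleanly: the forward direction uses that cases (1), (2), (3) are mutually exclusive and exhaustive (so multiply connected forces case~(3)), and the reverse direction is the direct implication in statement~(3).
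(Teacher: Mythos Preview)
Your proposal is correct and follows essentially the same approach as the paper: the paper states that the corollary follows directly from Lemma~\ref{onecrit} and Proposition~\ref{propconn}, and relies on the symmetry remark made at the start of the subsection (via Lemma~\ref{conjucacionn}) to pass from the root $z=1$ to all $n$th-roots of the unity. You have simply spelled out these steps in more detail, including the harmless relocation of the superattracting fixed point from $0$ to $1$.
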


\begin{figure}[h!]
  \centering
  \setlength{\unitlength}{10cm}
     \begin{picture}(1,1)%
     \put(0,0){ \begin{tikzpicture}
    \begin{axis}[width=10cm,  axis equal image, scale only axis,  enlargelimits=false, axis on top]
      \addplot graphics[xmin=-1,xmax=1.5,ymin=-1.25,ymax=1.25] {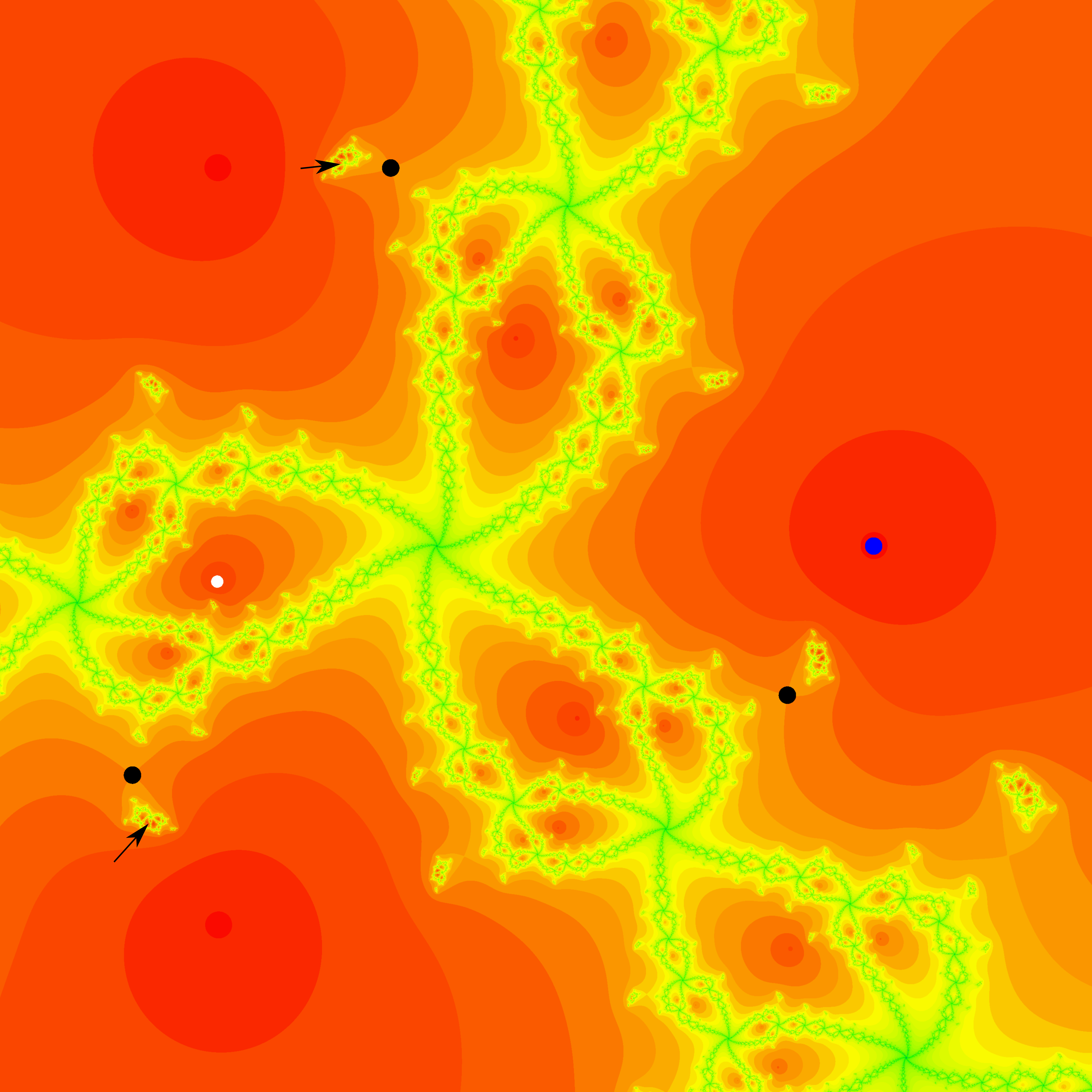};
       \end{axis}
       \end{tikzpicture}}
  % \put(0,0){\includegraphics[width=\unitlength]{disconCritZeros.pdf}}%
    \put(0.29,0.47){\color[rgb]{0,0,0}\makebox(0,0)[lb]{\smash{$w_1$}}}%
    \put(0.30,0.76){\color[rgb]{0,0,0}\makebox(0,0)[lb]{\smash{$w_2$}}}%
    \put(0.15,0.21){\color[rgb]{0,0,0}\makebox(0,0)[lb]{\smash{$w_3$}}}%
  \end{picture}
  \caption{Dynamical plane of $O_{3,\alpha}$ for $\alpha=0.2+1.592i$. \label{fig:dynambottcher}}
\end{figure}

In Figure~\ref{fig:dynambottcher} we show the dynamical plane of the operator $O_{3,\alpha}$ for $\alpha=0.2+1.592i$. The parameter is chosen so that the immediate basins of attraction of the roots are not simply connected. Since $n=3$, the map has degree $6$ and all points have 6 preimages. The roots are superattracting fixed points of local degree 3, so they are triple  preimages of themselves. Therefore, each root has 3 preimages other than itself.  In Figure~\ref{fig:dynambottcher} we show with a scaling from red to green the basins of attraction of the roots. We mark with black dots the locations of the three free critical points, and with a blue dot the location of the root $z=1$. We also mark the location of the 3 preimages of the root $z=1$, indicating them by $w_1$ (a white dot), $w_2$, and $w_3$. We observe that $A^*_{3,\alpha}(1)$ contains one critical point and no preimage of $z=1$. Hence, by Corollary \ref{rootsimply}, we know that the immediate basins of attraction of the roots are not simply connected. See Section~\ref{sec_Numerico_dynam} for a detailed description of how this picture has been produced.

The Julia set of a rational map is connected if and only if all Fatou components of the map are simply connected. We finish this section proving that only the Fatou components corresponding to the basins of attraction of the roots can have connectivity greater than one for the operators $O_{n,\alpha}$. We first prove that the operators $O_{n,\alpha}$ do not have Herman rings, that is, doubly connected rotation domains.

\begin{proposition}
\label{noherman} The maps $O_{n,\alpha}(z)$, where $n\geq 2$ and
$\alpha\in\mathbb{C}$, do not have Herman rings.
\end{proposition}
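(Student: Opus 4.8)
The plan is to argue by contradiction, combining the rotational symmetry of Lemma~\ref{conjucacionn} with the classical fact that the boundary of a rotation domain must be accumulated by critical orbits. Suppose $O_{n,\alpha}$ had a Herman ring; being a periodic Fatou component, it belongs to a cycle of Herman rings $H_0,\dots,H_{p-1}$. Both boundary components of each $H_i$ lie in the Julia set, and by Fatou's theorem (see \cite{beardon,Milnor}) each of them is accumulated by the forward orbit of a critical point; equivalently, each is contained in the closure of the postcritical set. The first task is therefore to decide which critical points of $O_{n,\alpha}$ can have orbits accumulating on the Julia set.

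Recall from (\ref{eq:O'}) and (\ref{criticalpointsn}) that the critical points are the $n$th roots of the unity, the point $z=0$, and the $n$ free critical points $c_\xi=\xi c_1$. The roots of the unity are superattracting, so their orbits remain in the corresponding basins and converge to the roots, which lie in the Fatou set; hence they never accumulate on the Julia set. The point $z=0$ is mapped to $z=\infty$, so its orbit is the orbit of $\infty$: if $\infty$ is attracting (the disk of Proposition~\ref{caracterinfty}) this orbit converges to $\infty$ inside the Fatou set, and otherwise $\infty$ is a repelling or parabolic fixed point and the orbit of $0$ is eventually the single point $\infty$, which cannot accumulate on the boundary continuum of a Herman ring. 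Consequently the only critical orbits that can accumulate on $\partial H_i$ are those of the free critical points $c_\xi$, and by Lemma~\ref{conjucacionn} these satisfy $O_{n,\alpha}^k(c_\xi)=\xi\,O_{n,\alpha}^k(c_1)$, so all $n$ of them are rotations of the single orbit of $c_1$.

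To exploit this, I would pass to the quotient by the rotation symmetry. Since $O_{n,\alpha}(\xi z)=\xi O_{n,\alpha}(z)$, the quantity $O_{n,\alpha}(z)^n$ depends only on $w=z^n$, so there is a rational map $\widetilde O$ with $\pi\circ O_{n,\alpha}=\widetilde O\circ\pi$, where $\pi(z)=z^n$. Because $0,\infty\notin H_i$ (a rotation domain contains no critical point and no fixed point), the map $\pi$ is unbranched over each $H_i$ and carries the cycle $H_0,\dots,H_{p-1}$ onto a cycle of Herman rings of $\widetilde O$. Under $\pi$ all $n$ free critical points collapse to the single point $c_1^n$, while the roots of the unity collapse to the superattracting fixed point $w=1$ and $z=0$ to $w=0$. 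Thus $\widetilde O$ has exactly one free critical orbit able to accumulate on its Julia set, namely the orbit of $c_1^n$.

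The final, and main, step is to derive a contradiction from the existence of a Herman ring of $\widetilde O$ whose two boundary components are accumulated by a single critical orbit. Here I would invoke the standard fact (Shishikura's count of non-repelling cycles together with the two-sidedness of a Herman ring) that each cycle of Herman rings requires at least two independent critical orbits accumulating on its two boundary components. Since $\widetilde O$ has only one such orbit available, it can have no Herman ring, and transporting this conclusion back along $\pi$ shows that $O_{n,\alpha}$ has none either. I expect the delicate points to be, first, verifying rigorously that each projection $\pi(H_i)$ is a genuine doubly connected rotation domain of $\widetilde O$ (rather than a Siegel disk or a degenerate annulus) and that no additional Julia-accumulating free critical orbit of $\widetilde O$ is created at the branch points $0,\infty$; and second, pinning down the precise \emph{two independent critical orbits} statement in a form that applies to a single cycle of Herman rings.
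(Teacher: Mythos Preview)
Your approach is essentially the same as the paper's: both pass to the quotient by $\pi(z)=z^n$ to obtain a rational map with a single free critical orbit, and then invoke Shishikura's result that a cycle of Herman rings requires at least two independent infinite critical orbits accumulating on its boundary. Your preliminary analysis of which critical orbits of $O_{n,\alpha}$ can accumulate on the Julia set is not strictly needed, since the paper carries out the critical-point count directly for the quotient map by differentiating the semiconjugacy relation; as for your ``delicate points'', the paper handles the first by a one-line remark that a rational semiconjugacy sends periodic Fatou components to periodic Fatou components of the same type, and the second by citing \cite{Shi1}.
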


\proof
A cycle of Herman rings has, at least, two different infinite orbits of
critical points which accumulate on its boundary (see \cite{Shi1}). The idea
of the proof is the following: we  show that a map $O_{n,\alpha}$ is
semiconjugate to a rational map $S$ of the same degree which has a single
free critical orbit and, therefore, cannot have Herman rings.

Let $n\geq 2$ and $\alpha\in\mathbb{C}$ be fixed. Let $Q_n(z)=z^n$. It is
not difficult to see that there exists a rational map $S_{n,\alpha}$ such
that $Q_n\circ O_{n,\alpha}(z)=S_{n,\alpha}\circ Q_n(z)$. Therefore, the
maps $O_{n,\alpha}$ and $S_{n,\alpha}$ are semiconjugate.  Two points $z_1$ and $z_2$
and mapped to the same point under $Q_n(z)$ if and only if $z_1=\xi z_2$,
where $\xi^n=1$, and therefore the dynamics of $z_1$ and $z_2$ under $%
O_{n,\alpha}$ are conjugate (see Lemma~\ref{conjucacionn}). It follows that
$Q_n$ sends periodic points to periodic points (possibly of different period)
and preperiodic points to preperiodic points.
Moreover, a rational semiconjugacy between rational maps
sends (periodic) Fatou components onto (periodic) Fatou components. In
particular, if $H$ is a Herman ring of $O_{n,\alpha}$, then $Q_n(H)$ is a Herman ring of  $S_{n,\alpha}$.

To finish the proof we show that $S_{n,\alpha}$ can not have Herman rings.
Deriving the equation $Q_n\circ O_{n,\alpha}(z)=S_{n,\alpha}\circ Q_n(z)$ we
obtain

\begin{equation*}
n (O_{n,\alpha}(z))^{n-1}\cdot
O^{\prime}_{n,\alpha}(z)=S^{\prime}_{n,\alpha}(z^n)\cdot (n z^{n-1}).
\end{equation*}

Therefore, the critical points of $S_{n,\alpha}$ are given by:

\begin{itemize}
\item the points 0 and $\infty$;

\item the images under $Q_n(z)$ of the zeros and poles of $O_{n,\alpha}$ (notice that the only critical points of $Q_n(z)$ are $z=0$ and $z=\infty$);

\item points of the form $c^n$ where $c$ is a critical point of $O_{n,\alpha}(z)$.
\end{itemize}

Since $Q_n$ sends periodic points to periodic points and preperiodic points
to preperiodic points, the point $\infty$ is fixed under $S_{n,\alpha}$
while $0$ is a preperiodic point (it is mapped to $\infty$). Therefore, all
free critical points of $S_{n,\alpha}$ are of the form $c^n$ where $c$ is a
critical point of $O_{n,\alpha}(z)$. There is a
$c_{n,\alpha}\in\widehat{\mathbb{C}}$ such that all free critical points of $O_{n,\alpha}(z)$
have the form $\xi \cdot c_{n,\alpha}$, where $\xi^n=1$
(see Equation~(\ref{criticalpointsn})). In particular, $S_{n,\alpha}$ has a single free
critical point given by $c_{n,\alpha}^n$. Therefore, $S_{n,\alpha}$ cannot
have Herman rings since a cycle of Herman rings has, at least, two different
infinite orbits of critical points accumulating on its boundary (see \cite{Shi1}).
\endproof

The next proposition deals
with the simple connectivity of  periodic basins of attraction. Notice that a cycle of Fatou components
is considered to be simply connected if all its connected components are
simply connected. The proof is analogous to the one of
\cite[Proposition 3.6]{CFG1} using the symmetry in the dynamics described in Lemma~\ref{conjucacionn}.

\begin{proposition}
\label{periodicsimply} Given a map $O_{n,\alpha}$, where $n\geq 2$ and
$\alpha\in\mathbb{C}$, the immediate basin of attraction of any attracting,
superattracting or parabolic cycle other than the $n$th-roots of the unity is
simply connected.
\end{proposition}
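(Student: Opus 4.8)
The plan is to reduce the statement to the semiconjugate map $S_{n,\alpha}$ introduced in the proof of Proposition~\ref{noherman}, where simple connectivity is almost immediate, and then to transport the conclusion back through the semiconjugacy $Q_n(z)=z^n$. Recall that $Q_n\circ O_{n,\alpha}=S_{n,\alpha}\circ Q_n$, that $Q_n$ is branched only over $0$ and $\infty$, and that $S_{n,\alpha}$ has a \emph{single} free critical point $c_{n,\alpha}^n$, the fixed point $z=1$ being the common image of the $n$th-roots of the unity. I will also use, as in the proof of Proposition~\ref{noherman}, that a rational semiconjugacy carries periodic Fatou components onto periodic Fatou components and periodic points onto periodic points, together with the fact (Classification Theorem) that the immediate basin of an attracting or parabolic cycle always contains a critical point.

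First I would take an attracting, superattracting or parabolic cycle $\mathcal{C}$ of $O_{n,\alpha}$ different from the roots of the unity and push it forward to the cycle $\widetilde{\mathcal{C}}=Q_n(\mathcal{C})$ of $S_{n,\alpha}$, whose immediate basin $\widetilde{B}$ is the $Q_n$-image of the immediate basin $B$ of $\mathcal{C}$. The first key point is that $\widetilde{\mathcal{C}}\neq\{1\}$: if some point $z_0\in\mathcal{C}$ satisfied $z_0^n=1$ it would be an $n$th-root of the unity, hence a superattracting fixed point, forcing $\mathcal{C}$ to be a single root of the unity, contrary to assumption.

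Next I would bound the critical points inside $\widetilde{B}$. Since $S_{n,\alpha}$ has exactly one free critical point, and its remaining critical points ($0$, $\infty$, the point $z=1$ and the images of the zeros and poles of $O_{n,\alpha}$) are all (pre)periodic and do not lie in $\widetilde{B}$, the immediate basin of a cycle different from $\{1\}$ and from $\{\infty\}$ can contain at most the free critical point $c_{n,\alpha}^n$; as it must contain one, it contains exactly one. Feeding this into the Riemann--Hurwitz formula (Theorem~\ref{riemannhurwitz}) applied to the first-return map on a periodic component of $\widetilde{B}$, the presence of a single critical point leaves simple connectivity as the only compatible solution, exactly as in the superattracting computation at the end of the proof of Proposition~\ref{propconn}; hence every component of $\widetilde{B}$ is simply connected. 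To transport this back, note that for a finite cycle $\mathcal{C}$ different from the roots one has $0,\infty\notin\widetilde{B}$ (the points $0$ and $\infty$ are (pre)periodic to the fixed point $\infty$ and lie in a different invariant region), so $Q_n$ restricts to an \emph{unbranched} covering over the simply connected set $\widetilde{B}$. Such a covering is trivial, hence the connected components of $Q_n^{-1}(\widetilde{B})$ are Fatou components of $O_{n,\alpha}$ each mapped homeomorphically onto a component of $\widetilde{B}$; in particular every periodic component of $B$ is simply connected.

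The remaining case $\mathcal{C}=\{\infty\}$, which forces $\widetilde{\mathcal{C}}=\{\infty\}$ and places the branch point of $Q_n$ inside the relevant basin, is the step I expect to be the main obstacle, since the counting above fails: the basin of $\infty$ contains the critical point $z=0$ (of multiplicity $n-2$, mapped to $\infty$) in addition to any free critical point it may absorb. Here I would argue directly with Riemann--Hurwitz on $A^*_{O_{n,\alpha}}(\infty)$, exploiting that this basin is invariant under every rotation $I_{\xi}$ of Lemma~\ref{conjucacionn} (because $I_{\xi}$ fixes $\infty$): the rotational invariance forces the free critical points it contains to occur in full symmetric orbits and thereby pins down the degree--branching balance. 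Equivalently, one may push to $A^*_{S_{n,\alpha}}(\infty)$ and compute with the branched Riemann--Hurwitz formula for $Q_n$ at $\infty$, where $Q_n\colon A^*_{O_{n,\alpha}}(\infty)\to A^*_{S_{n,\alpha}}(\infty)$ has degree $n$ and is branched only over $\infty$ with multiplicity $n-1$, yielding $m-2=n(1-2)+(n-1)=-1$ and hence simple connectivity of $A^*_{O_{n,\alpha}}(\infty)$ once that of $A^*_{S_{n,\alpha}}(\infty)$ is established.
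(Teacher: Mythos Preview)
The paper itself does not give a proof: it only states that the argument is analogous to \cite[Proposition~3.6]{CFG1} using the symmetry of Lemma~\ref{conjucacionn}. Your route through the semiconjugate map $S_{n,\alpha}$ is a natural repackaging of that symmetry (it is precisely the quotient by the $I_\xi$-action), and for any cycle $\mathcal{C}$ of $O_{n,\alpha}$ with $\mathcal{C}\neq\{\infty\}$ and $\mathcal{C}$ not a root, your argument is correct: the pushed-forward cycle $\widetilde{\mathcal{C}}$ avoids $\{1\}$ and $\{\infty\}$, all the non-free critical points of $S_{n,\alpha}$ are pre-fixed to $1$ or to $\infty$ and hence stay out of $\widetilde{B}$, so $\widetilde{B}$ meets a single critical point and the standard pull-back gives simple connectivity, which then lifts through the unbranched $Q_n$.

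The genuine gap is the case $\mathcal{C}=\{\infty\}$. Your final Riemann--Hurwitz computation presupposes two things you have not established. First, that $A^*_{S_{n,\alpha}}(\infty)$ is simply connected: unlike the previous case, this basin can contain several critical points of $S_{n,\alpha}$ (the point $0$, the image under $Q_n$ of the finite poles of $O_{n,\alpha}$, and the free critical point all map to $\infty$ or are pre-images of $\infty$), so the ``single critical point'' argument does not apply and you give no alternative. Second, that $Q_n$ is branched over $A^*_{S_{n,\alpha}}(\infty)$ only at $\infty$: this fails exactly when $0\in A^*_{O_{n,\alpha}}(\infty)$, in which case $Q_n$ also branches at $0$ with multiplicity $n-1$ and your count $m-2=n(1-2)+(n-1)$ is off by $n-1$. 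You need to either rule out $0\in A^*_{O_{n,\alpha}}(\infty)$ or treat that subcase separately; the phrase ``pins down the degree--branching balance'' does not do this. A workable fix is to run the pull-back argument directly on $O_{n,\alpha}$ using $I_\xi$-invariant disks $D_k$ around $\infty$ and to pass to the quotient $D_k/\langle I_\xi\rangle$ at each step, keeping track of whether and when $0$ enters; this is presumably what the reference to \cite{CFG1} is meant to cover.
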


Together with the fact that Siegel disks are always simply connected,
Proposition~\ref{periodicsimply} finishes the description of the
connectivity of periodic Fatou components of $O_{n,\alpha}$. In the next
proposition we characterize the simple connectivity of all preperiodic Fatou
components.

\begin{proposition}
\label{preperiodicsimply}

Let $U$ be a preperiodic Fatou component of $O_{n,\alpha}$, where $n\geq 2$
and $\alpha\in\mathbb{C}$. Then, $U$ is simply connected if and only if it
is eventually mapped under $O_{n,\alpha}$ onto a simply connected periodic
Fatou component of $O_{n,\alpha}$ .
\end{proposition}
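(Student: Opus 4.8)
The plan is to prove both implications with the Riemann--Hurwitz formula (Theorem~\ref{riemannhurwitz}), using as input the classification of periodic Fatou components obtained above (Propositions~\ref{periodicsimply} and~\ref{noherman}, together with the fact that Siegel disks are always simply connected) and the explicit list of critical points of $O_{n,\alpha}$ from Section~\ref{sec_polimomios_n}. Recall that for a proper map $f\colon U\to V$ of degree $k$ branched over $c$ critical points one has $m_U-2=k(m_V-2)+c$; in particular, whenever $U$ is a connected component of $O_{n,\alpha}^{-j}(V)$ the restriction $O_{n,\alpha}^{j}|_U\colon U\to V$ is proper and the formula applies.

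For the implication that simple connectivity of $U$ forces simple connectivity of the periodic component it lands on, I would argue by contraposition. Suppose the periodic component $V$ with $O_{n,\alpha}^{k}(U)=V$ is multiply connected. By Proposition~\ref{periodicsimply}, Proposition~\ref{noherman} and the simple connectivity of Siegel disks, the only periodic Fatou components that can fail to be simply connected are the immediate basins of the roots of unity; and by the Remark following Proposition~\ref{propconn} such a basin has connectivity $\infty$, so $m_V=\infty$. Applying Riemann--Hurwitz to the proper map $O_{n,\alpha}^{k}|_U\colon U\to V$ with $m_V=\infty$ and degree $\ge 1$ immediately yields $m_U=\infty$. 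Hence $U$ is multiply connected, which is exactly the contrapositive of the desired implication.

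For the converse I would assume $V$ simply connected and prove that $U$ is simply connected by downward induction along the forward orbit $U=U_0\to U_1\to\cdots\to U_k=V$, chosen with $k$ minimal so that $U_k$ is periodic (so $U_0,\dots,U_{k-1}$ are strictly preperiodic). The inductive step reduces to a single one-step statement: if $W$ is a strictly preperiodic component and $O_{n,\alpha}(W)$ is simply connected, then $W$ is simply connected. By Riemann--Hurwitz this is equivalent to $W$ carrying exactly $\deg(O_{n,\alpha}|_W)-1$ critical points, i.e.\ to $W$ having no excess critical points. The bookkeeping is controlled by the structure of the critical set: since $W$ is strictly preperiodic it is disjoint from the (fixed) immediate root basins, so it contains none of the roots of unity, and the only critical points it can contain are the free critical points $c_{n,\alpha,\xi}$ (Equation~(\ref{criticalpointsn})) and the point $z=0$ (of multiplicity $n-2$, mapped with local degree $n-1$ onto the fixed point $\infty$). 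Using the symmetry $I_\xi$ of Lemma~\ref{conjucacionn}, two distinct free critical points in $W$ would force $W$ to be invariant under a nontrivial rotation $I_{\xi_2\xi_1^{-1}}$, while $z=0\in W$ forces invariance under every $I_\zeta$; when neither occurs $W$ contains at most one critical point and a direct Riemann--Hurwitz computation (as in the proof of Proposition~\ref{propconn}) gives $m_W=1$.

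The hard part will be closing this one-step statement in the remaining symmetric configurations, which is also where the only genuine obstacle lies: a priori Riemann--Hurwitz allows a multiply connected (for instance annular, rotationally symmetric) strictly preperiodic component to map properly onto a simply connected one, provided it absorbs enough free critical points. Ruling this out is the crux. I expect to handle the component containing $z=0$ by the explicit local degree $n-1$ computation, checking that the Riemann--Hurwitz budget closes exactly, and to exclude the rotationally symmetric annular case by showing, via the exact count $4n-2$ of critical points together with Lemma~\ref{conjucacionn}, that such a component would be forced to have a multiply connected image, contradicting the hypothesis that $O_{n,\alpha}(W)$ is simply connected. Once the one-step statement is established, the downward induction finishes the proof.
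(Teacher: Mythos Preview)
Your argument for the first implication is correct and matches the paper's. For the converse, however, you leave a genuine gap: the ``hard part'' --- strictly preperiodic components that are rotationally symmetric and might carry several free critical points, or that contain $z=0$ --- is only sketched, and the sketch (closing Riemann--Hurwitz budgets exactly, excluding annuli via the global count $4n-2$) is neither complete nor clearly workable.

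The paper sidesteps this hard case entirely by a dichotomy you did not use. If the strictly preperiodic component $W$ lies in a root basin $A_{n,\alpha}(\eta)$, then Lemma~\ref{onecrit} (together with $0\mapsto\infty$, so $0\notin A_{n,\alpha}(\eta)$) already gives at most one critical point in all of $A_{n,\alpha}(\eta)\setminus\{\eta\}$, hence in $W$. If instead $W$ eventually lands on a periodic component that is \emph{not} a root basin, then that cycle must carry a free critical point in its immediate basin (attracting/parabolic case) or have a free critical orbit accumulating on its boundary (Siegel case); the symmetry of Lemma~\ref{conjucacionn} then forces \emph{all} $n$ free critical points to lie in periodic components, respectively in the Julia set. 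Either way no free critical point can sit in the strictly preperiodic $W$, which therefore contains at most the single critical point $z=0$. In both branches $W$ has at most one critical point, and the Riemann--Hurwitz observation ``a multiply connected preimage of a simply connected set needs at least two distinct critical points'' finishes the job at once. Your rotationally symmetric scenario simply never arises, and recognizing this is the missing idea.
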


\proof
The preimage of a multiply connected Fatou component under a rational map is
also multiply connected. Therefore, if $U$ is eventually mapped onto a
multiply connected periodic Fatou component, then $U$ is multiply connected.

We know from the Riemann-Hurwitz formula (Theorem~\ref{riemannhurwitz}) that
if a rational map $f$ maps a multiply connected Fatou component $U$ onto a
simply connected Fatou component $V$, then $U$ contains, at least, two
distinct critical points. It follows that the preimages of all simply
connected periodic Fatou components of $O_{n,\alpha}$ are simply connected.
Indeed, from Lemma~\ref{onecrit} we know that the basins of attraction of
the roots of $z^n-1$ contain, at most, one critical point and, therefore, if
the immediate basin of attraction of the root is simply connected, then so
are its preimages. On the other hand, any periodic Fatou component other
than the basins of attraction of the roots are related to the free critical
points. Attracting, superattracting and parabolic cycles contain, at least,
one critical point while Siegel disks have the orbit of, at least, a
critical point accumulating on its boundary (see, for instance \cite{beardon, Milnor}). By Lemma~\ref{conjucacionn}, if one free critical point
is related to a periodic Fatou component, then so are all of them. In
particular, no preimage of a periodic Fatou component other than the
immediate basin of attraction of a root can contain a critical point and,
hence, all preperiodic Fatou components which do not belong to the basin of
attraction of a root are simply connected.

\endproof

The next result follows directly from Corollary~\ref{rootsimply},
Proposition~\ref{noherman}, Proposition~\ref{periodicsimply}, and
Proposition~\ref{preperiodicsimply} since the Julia set of a rational map is
connected if and only if all its Fatou components are simply connected.

\begin{theorem}
\label{thmconJulia}

For fixed $n\geq 2$ and $\alpha \in \mathbb{C}$, the the Julia set
$\mathcal{J}(O_{n,\alpha })$ of $O_{n,\alpha }$ is disconnected if and only if
 $A_{n,\alpha }^{\ast }(1)$ contains a critical point $c\neq 1$ and no
preimage of $z=1$ other than $z=1$ itself.
\end{theorem}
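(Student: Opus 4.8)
The plan is to use the standard fact that the Julia set of a rational map is connected if and only if all of its Fatou components are simply connected; consequently $\mathcal{J}(O_{n,\alpha})$ is disconnected precisely when $O_{n,\alpha}$ possesses at least one multiply connected Fatou component. The strategy is therefore to classify all Fatou components and to isolate exactly which of them can fail to be simply connected, showing that this failure is governed entirely by the stated condition on $A_{n,\alpha}^*(1)$.

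First I would reduce to periodic and preperiodic components via Sullivan's theorem (no wandering domains). For the periodic ones, the Classification Theorem says each is either the basin of an attracting, superattracting or parabolic cycle, a Siegel disk, or a Herman ring. Siegel disks are always simply connected, and Proposition~\ref{noherman} rules out Herman rings, so the only periodic components that could be multiply connected are basins of cycles. Proposition~\ref{periodicsimply} then shows that every immediate basin of a cycle other than the $n$th-roots of the unity is simply connected. Combining these facts, the sole candidates for a multiply connected periodic component are the immediate basins of attraction of the roots. By the symmetry of Lemma~\ref{conjucacionn} it suffices to treat $A_{n,\alpha}^*(1)$, and Corollary~\ref{rootsimply} tells us this basin is multiply connected exactly when it contains a critical point $c\neq 1$ and no preimage of $z=1$ besides $z=1$ itself.

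For the preperiodic components I would apply Proposition~\ref{preperiodicsimply}: a preperiodic Fatou component is multiply connected if and only if it is eventually mapped onto a multiply connected periodic component. By the previous paragraph, such a periodic target must be an immediate basin of a root, so a multiply connected preperiodic component exists if and only if the same condition on $A_{n,\alpha}^*(1)$ holds. Putting the two cases together, $O_{n,\alpha}$ has a multiply connected Fatou component --- equivalently, $\mathcal{J}(O_{n,\alpha})$ is disconnected --- if and only if $A_{n,\alpha}^*(1)$ contains a critical point $c\neq 1$ and no preimage of $z=1$ other than itself, which is the desired equivalence.

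The genuine content of the argument lives in the supporting results already established, so the proof itself is essentially careful bookkeeping. The step I expect to require the most attention is ensuring the case analysis of Fatou components is exhaustive and that no multiply connected component can appear outside the basins of the roots: in particular, verifying that Proposition~\ref{periodicsimply} and Proposition~\ref{noherman} between them eliminate every periodic alternative, so that the equivalence is driven solely by Corollary~\ref{rootsimply} together with the preperiodic reduction of Proposition~\ref{preperiodicsimply}.
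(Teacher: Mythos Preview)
Your proposal is correct and follows essentially the same approach as the paper: the theorem is stated there as an immediate consequence of Corollary~\ref{rootsimply}, Proposition~\ref{noherman}, Proposition~\ref{periodicsimply}, and Proposition~\ref{preperiodicsimply}, together with the fact that the Julia set is connected if and only if every Fatou component is simply connected. Your write-up simply makes this bookkeeping explicit.
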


It follows directly from the previous theorem that the parameters of the Cat set correspond to operators with a connected Julia set. Recall that the Cat set is defined as the set of parameters for which the free critical points of the operator do not belong to the basins of attraction of the roots. In particular, we obtain the next result.

\begin{corollary}\label{corollary_con}
 Let  $O_{n,\alpha}$ be an operator with a parabolic cycle, a Siegel disk, or an attracting cycle other than the $n$th-roots of the unity. Then, its Julia set is connected.
\end{corollary}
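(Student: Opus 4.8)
The plan is to deduce the corollary from Theorem~\ref{thmconJulia} by proving that the hypothesis forces $\alpha$ to lie in the \emph{Cat set}, meaning that none of the $n$ free critical points of $O_{n,\alpha}$ converges to a root of $z^n-1$. Granting this, I would first note that the only critical points which can possibly lie in $A_{n,\alpha}^*(1)$ are the free ones: each root $\xi\neq 1$ is a superattracting fixed point lying in its own immediate basin $A_{n,\alpha}^*(\xi)$, and $z=0$ is mapped onto the fixed point $z=\infty$, so it does not belong to $A_{n,\alpha}(1)$. Hence, if no free critical point belongs to $A_{n,\alpha}(1)$, then $A_{n,\alpha}^*(1)$ contains no critical point other than $z=1$, and Theorem~\ref{thmconJulia} immediately gives that $\mathcal{J}(O_{n,\alpha})$ is connected.

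To prove that the hypothesis puts $\alpha$ in the Cat set I would argue by contradiction, assuming that some free critical point converges to a root. By Equation~(\ref{criticalpointsn}) the free critical points form a single orbit $\{\xi\,c_{n,\alpha}:\xi^n=1\}$ under the symmetries $I_{\xi}(z)=\xi z$, and by Lemma~\ref{conjucacionn} each $I_{\xi}$ conjugates $O_{n,\alpha}$ with itself and maps the basin $A_{n,\alpha}(1)$ onto $A_{n,\alpha}(\xi)$; consequently, if one free critical point converges to a root, then \emph{all} of them do. On the other hand, the hypothesis provides a structure $\Gamma$ --- a parabolic cycle, a Siegel disk, or an attracting cycle distinct from the roots of the unity --- which, by the Classification Theorem, is related to a critical point: in the attracting or parabolic case its immediate basin contains a critical point, and in the Siegel case the orbit of some critical point accumulates on its boundary. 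Since $\Gamma$ is distinct from the root cycles, this critical point does not converge to any root; being neither a root nor $z=0$, it must be a free critical point. This contradicts the conclusion that every free critical point converges to a root, so $\alpha$ lies in the Cat set and the Julia set is connected.

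The delicate point is precisely the claim that $\Gamma$ is fed by a \emph{free} critical point and not by one of the non-free critical points. For a parabolic cycle, a Siegel disk, or an attracting cycle different from $\{\infty\}$ this is clear, because the roots lie in their own basins and $z=0$ lands on the fixed point $z=\infty$, so none of these can be related to $\Gamma$. The only genuinely problematic case is $\Gamma=\{\infty\}$ attracting, where a priori the critical point $z=0$ could occupy the immediate basin of $\infty$ in place of a free critical point. To rule this out I would use the semiconjugacy from the proof of Proposition~\ref{noherman}: the map $S_{n,\alpha}$ defined by $Q_n\circ O_{n,\alpha}=S_{n,\alpha}\circ Q_n$, with $Q_n(z)=z^n$, has a single free critical point $c_{n,\alpha}^{\,n}$, so that tracking this unique free critical orbit of $S_{n,\alpha}$ determines the fate of the free critical orbit of $O_{n,\alpha}$ and prevents the free critical points from converging to the roots while $\infty$ is attracting. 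This is the step I expect to require the most care.
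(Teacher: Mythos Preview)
Your argument mirrors the paper's proof almost exactly: the paper also invokes the classification theorem to find a critical point attached to the extra stable structure, then uses the $I_\xi$-symmetry (Lemma~\ref{conjucacionn}) to conclude that no free critical point lies in a root basin, and finishes with Theorem~\ref{thmconJulia}. In fact you are \emph{more} careful than the paper, which simply asserts that ``at least one free critical point is related to the domain'' without discussing why this critical point cannot be $z=0$.

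The only genuine problem is the fix you propose for the case $\Gamma=\{\infty\}$ attracting. Passing to the semiconjugate map $S_{n,\alpha}$ does not resolve the difficulty: by the computation in the proof of Proposition~\ref{noherman}, the point $z=0$ is again a critical point of $S_{n,\alpha}$ and is again mapped to the fixed point $\infty$, so exactly the same obstruction (that the immediate basin of $\infty$ might be fed by $0$ rather than by the free critical point) recurs verbatim for $S_{n,\alpha}$. A cleaner way to close this case is a direct Riemann--Hurwitz count. Suppose, for contradiction, that $\infty$ is attracting while every free critical point lies in a root basin; then the only critical point available to $A_{n,\alpha}^{*}(\infty)$ is $z=0$, of multiplicity $n-2$. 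By Proposition~\ref{periodicsimply} the basin $A_{n,\alpha}^{*}(\infty)$ is simply connected, and by the $I_\xi$-symmetry the $n$ non-zero poles of $O_{n,\alpha}$ lie either all inside it or all outside, so $O_{n,\alpha}|_{A_{n,\alpha}^{*}(\infty)}$ has degree $d\in\{n,2n\}$. Theorem~\ref{riemannhurwitz} then forces $d-1\ge n-1$ critical points (with multiplicity) in $A_{n,\alpha}^{*}(\infty)$, contradicting $n-2<n-1$. For $n=2$ the point $z=0$ is not critical and the issue does not arise.
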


\proof
Every attracting or parabolic cycle has a critical point on its basin of attraction. Also, given a Siegel disk there exists a critical point whose orbit accumulates on its boundary (see \cite{beardon}). Therefore, if there is an stable domain different
from the basins of attraction of the roots, at least one free critical point is related to the domain and, hence, cannot lie inside the basin of attraction of any of the roots. It follows from the symmetry of the family (see Lemma~\ref{conjucacionn}) that no critical point belongs to the basin of attraction of a root and, by Theorem~\ref{thmconJulia}, the Julia set is connected.
 \endproof

The head and the body of the Cat set are examples of sets  of parameters for which we can apply Corollary~\ref{corollary_con}. If $\left\vert \alpha -\frac{1-4n+5n^2}{2(n-1)(2n-1)}\right\vert <\frac{n}{2(2n-1)}$,
the fixed point $z=\infty$ is an attracting
fixed point (Proposition \ref{caracterinfty}). So, at least one free
critical point must be in its basin of attraction. As the symmetry of the
problem is preserved, all the free critical points must be in this basin of
attraction and there are no free critical points in the basins of the roots. Similarly, if
 $\left\vert \alpha -\frac{2n-1}{n-1}\right\vert < \frac{1}{2}$ the $n$
strange fixed points are attracting  (Proposition
\ref{caracterExt}) and, hence, each of them has a critical point in its basin of
attraction. Therefore, each free critical point belongs to the basin of
attraction of one of the strange fixed points and there are no extra critical points in the basins
of the roots.

 We can analyse the connectivity of the Julia set for the hyperbolic component which contains the parameter $\alpha=\frac{2n-1}{3n-3}$. Recall that this parameter corresponds to an operator $O_{n,\alpha}$ with order of convergence 4 (see Proposition~\ref{grado4}).  A hyperbolic component in the parameter plane is an open set of parameters for which the orbits of all critical points converge to attracting or superattracting cycles.

\begin{corollary} \label{corolario_collar}
For fixed $n\geq2$, the Julia set of the operator $O_{n,\alpha}$ is connected
for all the parameters in the hyperbolic component which contains $\alpha=\frac{2n-1}{3n-3}$.
\end{corollary}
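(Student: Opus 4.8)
The plan is to first determine the dynamics at the centre $\alpha_0=\frac{2n-1}{3n-3}$ of the component and then to propagate connectivity to the whole component using the stability of the dynamics on a hyperbolic component. For the centre, I would start from Proposition~\ref{grado4}: at $\alpha_0$ the $n$th-roots of the unity are superattracting fixed points of local degree $4$, i.e.\ critical points of multiplicity three. Since the operator has degree $2n$ and hence $4n-2$ critical points counted with multiplicity, and since $z=0$ always contributes multiplicity $n-2$, the $n$ free critical points must merge with the roots at $\alpha_0$. Indeed, a direct substitution of $\alpha_0$ into Equation~(\ref{criticalpointsn}) gives $c_{\xi}^{\,n}=1$, so that $c_{\xi}=\xi$ for every $n$th-root of the unity $\xi$.

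Consequently $A_{n,\alpha_0}^{*}(1)$ contains no critical point other than $z=1$ itself, so the disconnectedness condition of Theorem~\ref{thmconJulia} fails and $\mathcal{J}(O_{n,\alpha_0})$ is connected. Equivalently, one is in case~1 of Proposition~\ref{propconn}: $A_{n,\alpha_0}^{*}(1)$ is the full B\"ottcher domain of the degree-$4$ superattracting fixed point, hence simply connected, with $z=1$ as its only preimage of $z=1$. This settles the conclusion at the single parameter $\alpha_0$.

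The second step is to spread connectivity over the hyperbolic component $H$ containing $\alpha_0$. The clean route is to invoke the stability of hyperbolic dynamics: on $H$ the orbit of the free critical point converges to the superattracting cycle formed by the roots, the maps $O_{n,\alpha}$ depend holomorphically on $\alpha$, and by the standard theory of $J$-stability they are quasiconformally (hence topologically) conjugate on a neighbourhood of their Julia sets. Since connectivity of $\mathcal{J}$ is a conjugacy invariant and is connected at $\alpha_0$, it is connected for every $\alpha\in H$. A more hands-on variant argues directly through Theorem~\ref{thmconJulia}: as $\alpha$ leaves $\alpha_0$ the degree-$4$ root splits into a superattracting fixed point of local degree $3$ together with the detached free critical point $c\neq 1$; whenever $c$ enters $A_{n,\alpha}^{*}(1)$, the lost unit of local degree reappears as a new preimage $z_0\neq 1$ of $z=1$ inside $A_{n,\alpha}^{*}(1)$, placing us in case~2 of Proposition~\ref{propconn} (simply connected, with a preimage) rather than the disconnected case~3. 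Reaching case~3 would require $z_0$ to escape $A_{n,\alpha}^{*}(1)$ while $c$ remains inside, a bifurcation that forces the free critical orbit out of the basin of the roots and hence out of $H$.

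The main obstacle is precisely this propagation step: one must control how the open set $A_{n,\alpha}^{*}(1)$, the free critical point $c_{\xi}$, and the extra preimages of $z=1$ move as $\alpha$ varies over $H$, and conclude that the value of the connectivity cannot jump inside $H$. Making this rigorous is exactly where the stability of hyperbolic maps (a holomorphic motion of the Julia set, or a global quasiconformal conjugacy) is needed, rather than a bare continuity argument; the degree bookkeeping of the previous paragraph should be regarded as the mechanism underlying that stability, with the delicate point being the behaviour in the multiply connected case, where the Riemann--Hurwitz count must be handled with care.
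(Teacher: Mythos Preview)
Your proposal is correct and follows essentially the same approach as the paper: establish connectivity at the centre $\alpha_0=\frac{2n-1}{3n-3}$ via Proposition~\ref{grado4} and Theorem~\ref{thmconJulia} (the free critical points merge with the roots, so the disconnectedness criterion fails), and then propagate to the whole hyperbolic component by invoking $J$-stability. The paper does exactly this, citing \cite{MSS} for the fact that connectivity of the Julia set is constant on a hyperbolic component; your ``clean route'' is this argument spelled out, and your hands-on variant is an unnecessary but not incorrect elaboration.
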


\proof
 The Julia set is connected for one parameter of a hyperbolic component if and only if it is connected for all the  parameters of the component (see \cite{MSS}). Hence, it is enough to prove that the Julia set is connected for $\alpha=\frac{2n-1}{3n-3}$.

 By Proposition~\ref{grado4},  the local degree of the roots for the operator $O_{n,\alpha}$ is 4  for $\alpha=\frac{2n-1}{3n-3}$. In particular, for $\alpha=\frac{2n-1}{3n-3}$ the free critical points coincide with the roots and the basins of attraction cannot contain any critical point other than the roots. Therefore, the Julia set is connected by Theorem~\ref{thmconJulia}.

 \endproof

%If $\alpha\neq \frac{2n-1}{3n-3}$ is close to $\frac{2n-1}{3n-3}$,  the roots have local degree 3. However, for all $\alpha$ in the same hyperbolic component than $\frac{2n-1}{3n-3}$, the immediate basins of attraction of the roots are still mapped with degree 4 onto themselves. Since the roots are triple preimages of themselves, it follows that the immediate basin of attraction of each root contains an extra preimage of the root.  We conclude  by Theorem~\ref{thmconJulia} that the Julia set is connected.

\section{Bifurcation parameters}\label{sec_bifurcation}

The goal of this section is to study parameters which are in the bifurcation locus of the parameter planes of the Chebyshev-Halley family operator $O_{n,\alpha }$. We focus on two types of such parameters.
  On one hand, we study the degeneracy parameters for which the operator decreases its degree. On the other hand, we search parameters for which the free critical points are eventually map to the fixed point $z=\infty$ and, hence, they are preperiodic.

We first study the parameters for which the operator  $O_{n,\alpha }$ decreases its degree.

\begin{lemma}\label{lemmahalley}
For $\alpha =\frac{1}{2}$ and $\alpha =\frac{2n-1}{2n-2},$ the family $
O_{n,\alpha }(z)$ degenerates to a rational function of lower degree.
\end{lemma}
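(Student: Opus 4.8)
The plan is to substitute each of the two parameter values directly into the explicit expression (\ref{Opn}) and read off how the degree of the resulting rational map changes. For a generic $\alpha$ the numerator of (\ref{Opn}) is a polynomial of degree $2n$ with nonzero constant term $(1-2\alpha)(n-1)$, while the denominator is $z^{n-1}$ times a polynomial of degree $n$ in $z$; since the numerator is not divisible by $z$, no cancellation with the factor $z^{n-1}$ occurs and the map has degree $2n$. The degeneracy will follow by exhibiting, at each of the two parameters, the vanishing of a leading or a trailing coefficient, which forces extra cancellation and lowers the degree below $2n$.

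First I would treat the case $\alpha=\tfrac12$. The constant coefficient $(1-2\alpha)(n-1)$ of the numerator vanishes precisely when $\alpha=\tfrac12$, so the numerator acquires a factor $z^{n}$. Writing the numerator (up to a nonzero constant) as $z^{n}\bigl((n+1)+(n-1)z^{n}\bigr)$ and the denominator as $z^{n-1}$ times a degree-$n$ polynomial in $z^{n}$, the common factor $z^{n-1}$ cancels and one is left with
\[
O_{n,1/2}(z)=\frac{z\bigl((n+1)+(n-1)z^{n}\bigr)}{(n-1)+(n+1)z^{n}},
\]
a rational map of degree $n+1$. Since $n+1<2n$ for $n\geq2$, the degree has dropped.

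Next I would treat $\alpha=\tfrac{2n-1}{2n-2}$. Here the relevant coefficient is the leading one, the coefficient $(n-1)(1-2\alpha-2n+2\alpha n)$ of $z^{2n}$ in the numerator. Rewriting its inner factor as $1-2n+2\alpha(n-1)$ and substituting $\alpha(n-1)=\tfrac{2n-1}{2}$ shows it vanishes, so the numerator collapses to degree $n$ while the denominator retains its full degree $2n-1$. Simplifying gives
\[
O_{n,\alpha}(z)=\frac{1+(2n-1)z^{n}}{z^{n-1}\bigl(2n-1+z^{n}\bigr)},
\]
a rational map of degree $2n-1<2n$, so again the degree drops.

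The computations are routine; the only point requiring care, and the step I expect to be the main obstacle, is confirming that the reduced degrees $n+1$ and $2n-1$ are exact, i.e.\ that beyond the obvious factor $z^{n-1}$ there is no further coincidental cancellation between numerator and denominator. For this I would check that the surviving numerator and denominator share no zero: in each case the numerator vanishes on a set of the form $z^{n}=\beta$ (and possibly at $z=0$), while the denominator vanishes at $z=0$ and on $z^{n}=\beta'$, with $\beta\neq\beta'$ and the denominator nonvanishing at the extra zeros of the numerator. Hence the two have no common root and the stated degrees are attained exactly, which establishes the claimed degeneration to lower degree for both parameters.
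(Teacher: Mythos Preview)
Your proof is correct and cleaner for the lemma as literally stated: you substitute each parameter directly, exhibit the vanishing coefficient, and read off the reduced degree, verifying there is no further common factor. The paper instead works in the opposite direction: it asks for \emph{which} $\alpha$ the degree can drop, by setting the leading coefficient to zero and by matching the roots of the numerator against those of the denominator (including the separate treatment of $\alpha=n/(n-1)$), and finds that the only solutions are $\alpha=\tfrac12$ and $\alpha=\tfrac{2n-1}{2n-2}$. Thus the paper actually proves the converse as well --- that these are the \emph{only} degeneracy parameters --- which is used implicitly elsewhere (e.g.\ the remark in Section~\ref{sec_polimomios_n} that ``except for degenerate cases studied in Section~\ref{sec_bifurcation}, the degree of this operator is $2n$''). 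Your direct substitution is the right argument for the stated direction and even yields the explicit reduced forms (which the paper only records later in Lemmas~\ref{0'1/2} and~\ref{lemmasingular}); just be aware that it does not by itself give the exhaustiveness that the paper's longer computation provides.
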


\proof Consider the rational function $O_{n,\alpha }(z)$ given in (\ref{Opn}):
\begin{equation*}
O_{n,\alpha }(z)=\frac{(1-2\alpha )(n-1)+(2-4\alpha -4n+6\alpha n-2\alpha
n^{2})z^{n}+(n-1)(1-2\alpha -2n+2\alpha n)z^{2n}}{z^{n-1}\left( 2\alpha
n(1-n)+2n(-\alpha -n+\alpha n)z^{n}\right) }.
\end{equation*}

The degree of a rational function decreases if the higher degree term vanishes or if the roots of the numerator and denominator match.
Making the higher degree term $(1-2\alpha -2n+2\alpha n)$ equal to to zero, we obtain the value
\begin{equation*}
\alpha =\frac{2n-1}{2n-2}.
\end{equation*}

For $\alpha \neq \frac{2n-1}{2n-2}$, we calculate the roots of numerator and
denominator.
The roots of the numerator are given by

\begin{equation}
z=\left( \frac{\left( 2n-1+\alpha \left( n-1\right) \left( n-2\right)
\right) \pm n\sqrt{\left( \alpha \left( n-1\right) -1\right) ^{2}+2\left(
n-1\right) }}{(1-2n+2\alpha (n-1))(n-1)}\right) ^{1/n}.  \label{r_num}
\end{equation}

The roots of the denominator are $z=0$ \ and
\begin{equation}
z=\left( \frac{\left( 1-n\right) \alpha }{n+\alpha \left( 1-n\right) }\right) ^{1/n},
\text{ \ for }\alpha \neq \frac{n}{n-1}.  \label{r_den}
\end{equation}

The case $\alpha=n/(n-1)$ is studied separately. The values of $\alpha $ for which the roots given in (\ref{r_num})
coincide with $z=0$ are solutions of the equation
\begin{equation*}
\left( 2n-1+\alpha \left( n-1\right) \left( n-2\right) \right) \pm
n\sqrt{\left( \alpha \left( n-1\right) -1\right) ^{2}+2\left( n-1\right) }=0.
\end{equation*}
 Operating on the expression we obtain
\begin{equation*}
\left( n\sqrt{\left( \alpha \left( n-1\right) -1\right) ^{2}+2\left(
n-1\right) }\right) ^{2}=\left( 2n-1+\alpha \left( n-1\right) \left(
n-2\right) \right) ^{2},
\end{equation*}
and \bigskip $\left( 2\alpha -1\right) \left( n-1\right) ^{2}\left(
2n+2\alpha -2n\alpha -1\right) =0.$ Thus, the solutions are
\begin{equation*}
\alpha =\frac{1}{2}\text{ \ and \ }\alpha =\frac{2n-1}{2n-2}.
\end{equation*}

Since we are considering the case $\alpha \neq \frac{2n-1}{2n-2},$ we obtain $\alpha =\frac{1}{2}.$
Secondly, we look for values of $\alpha $ so that roots given in (\ref{r_num})
coincide with roots given in (\ref{r_den}). They are solutions of the equation
\begin{equation*}
\frac{\left( 2n-1+\alpha \left( n-1\right) \left( n-2\right) \right) \pm n
\sqrt{\left( \alpha \left( n-1\right) -1\right) ^{2}+2\left( n-1\right) }}
{(1-2n+2\alpha (n-1))(n-1)}=\frac{\left( 1-n\right) \alpha }{n+\alpha \left(
1-n\right) }.
\end{equation*}

Operating on the expression we obtain

\begin{equation*}
\left( \frac{\left( 1-n\right) \alpha (1-2n+2\alpha (n-1))(n-1)}{n+\alpha
\left( 1-n\right) }-\left( 2n-1+\alpha \left( n-1\right) \left( n-2\right)
\right) \right) ^{2}=
\end{equation*}

\begin{equation*}
=\left( n\sqrt{\left( \alpha \left( n-1\right) -1\right) ^{2}+2\left(
n-1\right) }\right) ^{2},
\end{equation*}
which implies
\begin{equation*}
\left( 1+2\alpha \left( n-1\right) -2n\right) \left( n-1\right) ^{2}n^{2}=0.
\end{equation*}
The root  of the last equation is
\begin{equation*}
\alpha =\frac{2n-1}{2n-2}.
\end{equation*}
However, this is not a valid solution since we are considering the case $\alpha \neq \frac{2n-1}{2n-2}$.

Finally,  we consider the value $\alpha =\frac{n}{n-1}$ discarded in Equation (\ref{r_den}). In this case, the degree of the operator remains $2n$:
\begin{equation*}
O_{n,\frac{n}{n-1}}(z)=\frac{\left( n+1\right) +2(n^{2}-1)z^{n}-\left(
n-1\right) z^{2n}}{2n^{2}z^{n-1}}.
\end{equation*}

Then, the only values for which the operator degenerates to a rational
function of lower degree are $\alpha =\frac{1}{2}$ and $\alpha =\frac{2n-1}{2n-2}.$
\endproof

In the following we analyse the dynamical behaviour of the numerical methods  corresponding to the bifurcation parameters obtained above.

\begin{lemma}  \label{0'1/2}
For $\alpha =\frac{1}{2}$ the strange fixed points are $z=0$ and $z=\infty$. Moreover, the only critical points are the roots of the polynomial, which have multiplicity $2$
\end{lemma}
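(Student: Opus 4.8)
The plan is to substitute $\alpha=\frac12$ directly into the explicit formulas (\ref{Opn}), (\ref{fijosExt}) and (\ref{eq:O'}) and read off the fixed and critical points from the resulting lower-degree expressions, rather than from the generic degree-$2n$ count of Section~\ref{sec_polimomios_n}, which no longer applies once the operator degenerates (cf.\ Lemma~\ref{lemmahalley}).

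For the strange fixed points I would start from their defining equation (\ref{fijosExt}). Setting $\alpha=\frac12$, the constant term $-1+2\alpha+n-2\alpha n=-1+1+n-n$ vanishes and the coefficient of $z^n$ becomes $1-2\alpha-3n+2\alpha n=-2n$, so (\ref{fijosExt}) collapses to $-2nz^n=0$, whose only solution is $z=0$. Thus the $n$ finite strange fixed points all coalesce at $z=0$; together with $z=\infty$, which is fixed for every $\alpha$, these are the two strange fixed points. To keep the statement self-contained I would also simplify (\ref{Opn}) at $\alpha=\frac12$ to
$$O_{n,1/2}(z)=\frac{z\left((n+1)+(n-1)z^n\right)}{(n-1)+(n+1)z^n},$$
a rational map of degree $n+1$ (consistent with Lemma~\ref{lemmahalley}), and check directly that $O_{n,1/2}(0)=0$ and that $O_{n,1/2}(z)\to\infty$ as $z\to\infty$ (indeed $O_{n,1/2}(z)$ behaves like $\frac{n-1}{n+1}z$), so that both $z=0$ and $z=\infty$ are genuinely fixed.

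For the critical points I would substitute $\alpha=\frac12$ into (\ref{eq:O'}). The term $\alpha(1-2\alpha)(n-1)^2$ vanishes because $1-2\alpha=0$, and after cancelling the common factor $z^n$ the derivative simplifies to
$$O'_{n,1/2}(z)=\frac{(n-1)(n+1)(z^n-1)^2}{\left((n-1)+(n+1)z^n\right)^2}.$$
The numerator and denominator share no zeros, since $z^n=1$ and $z^n=-\frac{n-1}{n+1}$ are incompatible for $n\geq2$, so the critical points are exactly the zeros of $(z^n-1)^2$, namely the $n$th-roots of the unity, each of multiplicity $2$.

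The only delicate point is to guarantee that no further critical points are hidden and that the free critical points of the generic case do not survive. For the latter I would note that the factor $2\alpha-1$ in (\ref{criticalpointsn}) forces $c_{n,1/2,\xi}=0$, so the free critical points collapse onto $z=0$; but after the degree drop $z=0$ is a fixed point rather than a pole, and $O'_{n,1/2}(0)=\frac{n+1}{n-1}\neq0$, so $z=0$ is not critical. The cleanest way to rule out everything else is a degree count: by Lemma~\ref{lemmahalley} the degree of $O_{n,1/2}$ is $n+1$, so by the Riemann--Hurwitz formula (Theorem~\ref{riemannhurwitz}, applied to $O_{n,1/2}\colon\wcom\rightarrow\wcom$) it has $2(n+1)-2=2n$ critical points counted with multiplicity; the $n$ roots of unity, each of multiplicity $2$, already exhaust all $2n$ of them, leaving none elsewhere (in particular $z=\infty$ is not critical, as confirmed by $O'_{n,1/2}(z)\to\frac{n-1}{n+1}$). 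This bookkeeping, reconciling the degenerate operator with the generic descriptions of Section~\ref{sec_polimomios_n}, is the main obstacle; the rest is direct substitution.
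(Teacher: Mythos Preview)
Your proof is correct and follows essentially the same approach as the paper: substitute $\alpha=\frac12$ into (\ref{Opn}) and (\ref{eq:O'}), obtain the simplified operator and its derivative, and read off the fixed and critical points. You add some extra care the paper omits---the Riemann--Hurwitz count to confirm the $2n$ critical points are exhausted, and the explicit checks that $z=0$ and $z=\infty$ are not critical---but the core argument is the same direct simplification.
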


\begin{proof}
The rational function for $\alpha =\frac{1}{2}$ is given by
\begin{equation}
O_{n,\frac{1}{2}}(z)=\frac{(n+1)z+(n-1)z^{n+1}}{(n-1)+(n+1)z^{n}}.
\label{ope1}
\end{equation}
The fixed points of $O_{n,\frac{1}{2}}(z)$ are the roots,  $z=0$ (the $n$ strange
fixed points collide at $0$) and $z=\infty$.
The derivative is given by
\begin{equation}
O_{n,\frac{1}{2}}^{\prime }(z) =\frac{\left( n^{2}-1\right) \left(
z^{n}-1\right) ^{2}}{((n-1)+(n+1)z^{n})^{2}}. \label{eq_0'1/2}
\end{equation}
The only critical points, which are the zeros of $O_{n,\frac{1}{2}}^{\prime }(z)$, are the roots of the polynomial $z^n-1$ and have multiplicity $2$.
\end{proof}

\begin{lemma}\label{lemmasingular}
For $\alpha =\frac{2n-1}{2n-2}$, the roots of the polynomial are critical points of multiplicity $2$. Moreover, the points $\{0,\infty \}$ form a cycle of period 2, which is superattracting if $n>2$.
\end{lemma}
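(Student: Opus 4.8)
The plan is to substitute $\alpha=\frac{2n-1}{2n-2}$ into the closed forms already available for the operator (\ref{Opn}) and its derivative (\ref{eq:O'}), and then read off the two assertions. First I would record the elementary identities produced by this choice of $\alpha$: writing $\alpha=\frac{2n-1}{2(n-1)}$ one finds $1-2\alpha=\frac{-n}{n-1}$, $\alpha(n-1)=\frac{2n-1}{2}$, $\alpha+n-\alpha n=\frac{1}{2}$ and, crucially,
\[
1-2n-2\alpha+2\alpha n=1-2n+2\alpha(n-1)=0 .
\]
This last identity is exactly the vanishing of the leading coefficient noted in Lemma~\ref{lemmahalley}; substituting into (\ref{Opn}) and cancelling the common factor $-n$ yields the degenerate operator
\[
O_{n,\frac{2n-1}{2n-2}}(z)=\frac{1+(2n-1)z^n}{z^{n-1}\left(2n-1+z^n\right)},
\]
a rational map of degree $2n-1$.

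For the first assertion I would exploit the derivative formula (\ref{eq:O'}). The factor $(1-2n-2\alpha+2\alpha n)$ multiplies the $z^n$ term inside the numerator bracket, so the identity above annihilates that term and the bracket collapses to the constant $\alpha(1-2\alpha)(n-1)^2=-\frac{n(2n-1)}{2}$. Simplifying the denominator with $\alpha(n-1)=\frac{2n-1}{2}$ and $\alpha+n-\alpha n=\frac{1}{2}$, one obtains
\[
O'_{n,\frac{2n-1}{2n-2}}(z)=\frac{-(n-1)(2n-1)(z^n-1)^2}{z^n\left(z^n+2n-1\right)^2}.
\]
Since each $n$th-root of unity $\xi$ satisfies $\xi\neq 0$ and $\xi^n+2n-1=2n\neq 0$, the denominator is non-zero at $\xi$ while $(z^n-1)^2$ vanishes there to order exactly $2$; hence each root of $z^n-1$ is a critical point of multiplicity $2$.

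For the second assertion I would evaluate the degenerate operator near the two candidate points. Near $z=0$ the numerator tends to $1$ while the denominator vanishes, giving $O(0)=\infty$, and near $z=\infty$ the leading behaviour is $O(z)\sim (2n-1)z^{-(n-1)}$, giving $O(\infty)=0$; thus $\{0,\infty\}$ is a $2$-cycle. To decide when it is superattracting I would read off the local degrees from the expansions $O(z)\sim\frac{1}{(2n-1)z^{n-1}}$ near $0$ and $O(z)\sim\frac{2n-1}{z^{n-1}}$ near $\infty$: both maps $0\mapsto\infty$ and $\infty\mapsto 0$ have local degree $n-1$. Consequently $z=0$ (equivalently $z=\infty$) is a critical point of $O$ precisely when $n-1\geq 2$, in which case $z=0$ is a critical point of $O^2$, its multiplier as a fixed point of $O^2$ vanishes, and the $2$-cycle is superattracting exactly for $n>2$.

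The only genuinely delicate point is the superattracting claim, and the care it requires is to analyse $0$ and $\infty$ through local coordinates rather than through the derivative formula, which has a pole at $z=0$. The clean organising principle is that the cycle is superattracting if and only if one of its points is a critical point of $O$, and that a point mapping with local degree $n-1$ is critical exactly when $n-1\geq 2$; this isolates the threshold $n>2$ and explains why $n=2$ is excluded (there both points are regular and the cycle has non-zero multiplier). As a consistency check one may count critical points: the $2n$ roots counted with multiplicity together with the multiplicity-$(n-2)$ branch points at $0$ and $\infty$ give $2n+2(n-2)=4(n-1)=2(2n-1)-2$, matching the count expected for a map of degree $2n-1$.
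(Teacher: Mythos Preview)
Your proof is correct and follows essentially the same approach as the paper: you substitute $\alpha=\frac{2n-1}{2n-2}$ to obtain the explicit degenerate operator and its derivative, then read off the multiplicity-$2$ criticality at the roots and the local degree $n-1$ at $0$ and $\infty$ to conclude the $2$-cycle is superattracting for $n>2$. Your exposition is somewhat more detailed (recording the intermediate identities and adding the Riemann--Hurwitz count as a sanity check), but the argument is the same.
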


\begin{proof}

The operator for $\alpha =\frac{2n-1}{2n-2}$ is
\begin{equation}
O_{n,\frac{2n-1}{2n-2}}(z)=\frac{1+(2n-1)z^{n}}{\left( 2n-1+z^{n}\right)
z^{n-1}}.  \label{ope2}
\end{equation}
The fixed points are the roots of the polynomial and  the $n$th-roots of $-1$.
The derivative is given by
\begin{equation}
O_{n,\frac{2n-1}{2n-2}}^{\prime }(z) =-\frac{\left( 2n^{2}-3n+1\right)
\left( z^{n}-1\right) ^{2}}{z^{n}\left( 2n-1+z^{n}\right) ^{2}}.
\end{equation}
The roots of the polynomial are  critical points with multiplicity $2$.
Since the numerator of $O_{n,\frac{2n-1}{2n-2}}(z)$ does not vanish at $z=0$ and  $z=0$ is a root of multiplicity $n-1$ of the denominator, we conclude that $z=0$ is mapped to $z=\infty$ with degree $n-1$.
Therefore, $z=0$ is a critical point if $n-1>1$, that is, $n>2$. Similarly, $z= \infty$ is mapped to $z=0$ with degree $n-1$ and is a critical point if $n>2$. In particular $\{0,\infty\}$ is a period two cycle. Since a periodic cycle is superattracting when any of the points of the cycle is critical, we conclude that $\{0,\infty\}$ is a superattracting cycle if $n>2$.
\end{proof}

Notice that the point $z=\infty$ is a fixed point for all $\alpha\neq\frac{2n-1}{2n-2}$ but it becomes a period 2 superattracting periodic point if $n>2$ and $\alpha=\frac{2n-1}{2n-2}$ (see Lemma~\ref{lemmasingular}). This may lead to very unstable dynamics (compare Figure~\ref{fig:paramzoomsing}).

We now look for bifurcation parameters for which the free critical points coincide with $z=0$ or are mapped onto it. Notice that $z=0$ is  a preimage of the  fixed point $z=\infty$. In that case, the free critical points are preperiodic and, therefore, there is no other stable behaviour than  the basins of attraction of the roots.

\begin{lemma}\label{solofijos}
 The free critical points collapse with $z = 0$ if and only if $\alpha =0$.
\end{lemma}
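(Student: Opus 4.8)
The plan is to read the answer directly off the closed formula (\ref{criticalpointsn}) for the free critical points $c_{n,\alpha,\xi}$. All $n$ of these points have the form $c_{n,\alpha,\xi}=\xi\cdot (z_0)^{1/n}$ with $\xi^n=1$, where
\[
z_0=\frac{\alpha(n-1)^2(2\alpha-1)}{n(2n-1)-\alpha(4n-1)(n-1)+2\alpha^2(n-1)^2}.
\]
Hence the free critical points all collapse to $z=0$ if and only if $z_0=0$, which (provided the denominator does not simultaneously vanish) happens exactly when the numerator $\alpha(n-1)^2(2\alpha-1)$ is zero. This reduces the lemma to an elementary analysis of when a single factor vanishes.

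First I would note that, since $n\geq 2$, the factor $(n-1)^2$ never vanishes, so the numerator is zero precisely when $\alpha(2\alpha-1)=0$, i.e.\ for $\alpha=0$ or $\alpha=\tfrac12$. For the backward implication I would check the case $\alpha=0$ directly: here the denominator equals $n(2n-1)\neq 0$, so $z_0=0$ is genuine and every $c_{n,0,\xi}=0$. Moreover $\alpha=0$ is the Chebyshev method and is not one of the degeneracy values $\alpha=\tfrac12,\ \alpha=\frac{2n-1}{2n-2}$ of Lemma~\ref{lemmahalley} (the latter would force $2n-1=0$), so the operator has the generic degree $2n$ and formula (\ref{criticalpointsn}) legitimately applies. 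This settles that $\alpha=0$ does make the free critical points collapse with $z=0$.

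The only point requiring care, and the real obstacle, is the spurious solution $\alpha=\tfrac12$. This value is one of the degeneracy parameters of Lemma~\ref{lemmahalley}, for which $O_{n,\alpha}$ drops in degree so that the derivation of (\ref{criticalpointsn}) no longer holds; one cannot conclude anything about free critical points from the formula there. I would therefore invoke Lemma~\ref{0'1/2}, which states that for $\alpha=\tfrac12$ the only critical points of $O_{n,1/2}$ are the $n$th-roots of unity, each of multiplicity $2$. Thus at $\alpha=\tfrac12$ there are no free critical points at all, so the assertion ``the free critical points collapse with $z=0$'' is vacuous and this value must be discarded. Ruling out $\alpha=\tfrac12$ in this way leaves $\alpha=0$ as the unique parameter for which the free critical points coincide with $z=0$, which establishes the equivalence.
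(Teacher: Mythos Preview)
Your proof is correct and follows essentially the same approach as the paper: both read off the candidate values $\alpha=0$ and $\alpha=\tfrac12$ from the numerator of the formula (\ref{criticalpointsn}) and then discard $\alpha=\tfrac12$ by invoking Lemma~\ref{0'1/2}. Your version is slightly more careful in explicitly checking that the denominator does not vanish at $\alpha=0$ and that the formula genuinely applies there, but the argument is the same.
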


\proof
In the expression of the critical points (Equation (\ref{criticalpointsn}))  we observe that critical points become equal to 0 for $\alpha =0$ and $\alpha =\frac{1}{2}$.
Nevertheless, $\alpha =\frac{1}{2}$ is a degenerate case;  in this case, as we have seen in Lemma \ref{0'1/2}, $z=0$ is not a critical point. So, the only value for which free critical points collapse with $z = 0$ is  $\alpha =0$.
\endproof

Now, we calculate  the values of the parameter such that critical points are preimages of $z=0$. As before, for such parameters the critical points are preperiodic since
 $z=0$ maps to the  fixed point $z=\infty$.

\begin{lemma}\label{lemmaprecrit}
Critical points are preimages of $z=0$ if and only if  $\alpha =\frac{1\pm \sqrt{2\left(
1-n\right) }}{n-1}.$
\end{lemma}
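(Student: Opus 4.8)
The plan is to reduce everything to a condition on the single quantity $w_{c}:=c_{\xi}^{n}$, where $c_{\xi}$ is a free critical point. By Equation~(\ref{criticalpointsn}) all $n$ free critical points share the same $n$th power,
$$w_{c}=\frac{\alpha(n-1)^{2}(2\alpha-1)}{n(2n-1)-\alpha(4n-1)(n-1)+2\alpha^{2}(n-1)^{2}},$$
and by the symmetry $I_{\xi}$ of Lemma~\ref{conjucacionn} one has $O_{n,\alpha}(\xi z)=\xi\,O_{n,\alpha}(z)$, so that $O_{n,\alpha}(c_{\xi})=\xi\,O_{n,\alpha}(c_{1})$. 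Hence either all $n$ free critical points are preimages of $z=0$ or none of them is, and it suffices to analyse one. Writing $O_{n,\alpha}(z)=N(z^{n})/M(z)$ as in~(\ref{Opn}), where $N$ is the numerator viewed as a quadratic in $w=z^{n}$ and $M$ its denominator, a finite non-pole point $c\neq 0$ is a preimage of $0$ precisely when $N(w_{c})=0$. The excluded cases $c=0$, which occur only for $\alpha\in\{0,\tfrac12\}$, are governed by Lemma~\ref{solofijos} and Lemma~\ref{0'1/2} and do not yield preimages of $0$.

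The key step for the forward implication is to combine $N(w_{c})=0$ with the fact that $c$ is a critical point. Differentiating $O_{n,\alpha}=N(z^{n})/M$ gives, at $z=c$,
$$N'(w_{c})\,n\,c^{\,n-1}M(c)=N(w_{c})\,M'(c).$$
If $c$ is a preimage of $0$, then $N(w_{c})=0$ while $M(c)\neq0$ and $c^{n-1}\neq0$; the displayed identity then forces $N'(w_{c})=0$, so $w_{c}$ is a \emph{double} root of the quadratic $N$ and the discriminant of $N$ must vanish. A direct computation identifies this discriminant as a nonzero multiple of $\big(\alpha(n-1)-1\big)^{2}+2(n-1)$, which is exactly the radicand appearing in the root formula~(\ref{r_num}). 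Setting it to zero gives $\alpha(n-1)-1=\pm\sqrt{2(1-n)}$, that is, $\alpha=\tfrac{1\pm\sqrt{2(1-n)}}{n-1}$.

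For the converse I would show that when the discriminant vanishes the unique double root of $N$ coincides with $w_{c}$, so that $N(w_{c})=0$ and $c$ is indeed a preimage of $0$. When the radical in~(\ref{r_num}) is zero the double root equals $w_{*}=\frac{2n-1+\alpha(n-1)(n-2)}{(n-1)(1-2n+2\alpha(n-1))}$, and writing $u=\alpha(n-1)$ one verifies the polynomial identity
$$\big[(2n-1)+(n-2)u\big](u-n)-(n-1)u(2u-n+1)=-n\big(u^{2}-2u+2n-1\big),$$
whose right-hand side is precisely $-n\big((u-1)^{2}+2(n-1)\big)$. This shows $w_{*}=w_{c}$ exactly under the condition $(u-1)^{2}+2(n-1)=0$, completing the equivalence.

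The hard part will be the converse: the forward direction is clean once one observes that a critical point landing on $0$ must be a double root of the numerator, but the converse requires the explicit verification above that the double root of $N$ really is $w_{c}$. I would also check, at the two candidate values of $\alpha$, that $c$ is a genuine finite non-pole point (so $M(c)\neq0$ and $c\neq0$), ruling out the possibility that $c$ is instead mapped to $\infty$; and since the degenerate parameters $\alpha=\tfrac12$ and $\alpha=\tfrac{2n-1}{2n-2}$ of Lemmas~\ref{0'1/2} and~\ref{lemmasingular} do not satisfy $(u-1)^{2}+2(n-1)=0$, no spurious solutions arise.
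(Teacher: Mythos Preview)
Your proof is correct and takes a genuinely different route from the paper's. The paper proceeds by brute force: it writes down the $n$th power $w_c$ of the free critical points and the two roots (in $z^n$) of the numerator $N$ from~(\ref{r_num}), sets them equal, and after squaring and simplifying obtains the factorisation
\[
(2\alpha-1)\,(1+2\alpha(n-1)-2n)^2\,(n-1)^2\,\big(\alpha^2(n-1)^2-2\alpha(n-1)+2n-1\big)=0,
\]
then discards the spurious factors $\alpha=\tfrac12$ and $\alpha=\tfrac{2n-1}{2n-2}$ by appealing to Lemmas~\ref{lemmahalley}--\ref{lemmasingular}.

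Your argument replaces this with the structural observation that, writing $O=N(z^n)/M$, a critical point $c\neq0$ with $O(c)=0$ and $M(c)\neq0$ forces both $N(w_c)=0$ and $N'(w_c)=0$, so $w_c$ must be a \emph{double} root of the quadratic $N$ and its discriminant must vanish. This produces the quadratic factor $(u-1)^2+2(n-1)$ directly, with no extraneous solutions to discard. The price you pay is the explicit verification for the converse, namely that when the discriminant vanishes the double root of $N$ really coincides with $w_c$; your polynomial identity in $u$ handles this cleanly, and the non-degeneracy checks you defer ($c\neq0$, $M(c)\neq0$, and that the candidate $\alpha$'s are not among $\{0,\tfrac12,\tfrac{n}{n-1},\tfrac{2n-1}{2n-2}\}$) are routine. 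Overall your approach is more conceptual and explains \emph{why} precisely two values of $\alpha$ arise, whereas the paper's direct equating is more mechanical but makes the degenerate cases appear as genuine factors that then have to be argued away.
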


\proof  For $\alpha \neq \{\frac{n}{n-1},\frac{2n-1}{2n-2}\}$, the free critical points are given by

\begin{equation}
c_{n,\alpha, \xi }=\xi\left( \frac{\left( n-1\right) ^{2}\left( -1+2\alpha \right)
\alpha }{n\left( 2n-1\right) -\left( 4n-1\right) \left( n-1\right) \alpha
+2\left( n-1\right) ^{2}\alpha ^{2}}\right) ^{1/n},
\end{equation}
where $\xi$ denotes an $n$th-root of the unity. We discard the parameter $\alpha=\frac{n}{n-1}$ since the critical points coincide with $z=\infty$ (see Proposition~\ref{caracterinfty}). We also discard the parameter $\frac{2n-1}{2n-2}$ since the degree of $O_{n,\alpha}$ decreases and there are no free critical points (see Lemma~\ref{lemmahalley} and Lemma~\ref{lemmasingular}). Preimages of $z=0$ are obtained from $O_{n,\alpha }\left( z\right) =0$,
that is
\begin{equation*}
(1-2\alpha )(n-1)+(2-4\alpha -4n+6\alpha n-2\alpha
n^{2})z^{n}+(n-1)(1-2\alpha -2n+2\alpha n)z^{2n}=0.
\end{equation*}
The solutions satisfy
\begin{equation*}
z^{n}=\frac{-1+2n+\alpha \left( n-1\right) \left( n-2\right) \pm
 n\sqrt{2\left( n-1\right) +\left( 1+\alpha \left( 1-n\right) \right) ^{2}}}{\left(
n-1\right) \left( 1-2n+2\alpha \left( n-1\right) \right) }.
\end{equation*}
The solutions coincide with critical points if

\begin{equation*}
\frac{-1+2n+\alpha \left( n-1\right) \left( n-2\right) \pm n\sqrt{2\left(
n-1\right) +\left( 1+\alpha \left( 1-n\right) \right) ^{2}}}{\left(
n-1\right) \left( 1-2n+2\alpha \left( n-1\right) \right) }=
\end{equation*}

\begin{equation*}
=\frac{\left( n-1\right) ^{2}\ \left( 2\alpha -1\right) \alpha }{n\left(
2n-1\right) -\left( 4n-1\right) \left( n-1\right) \alpha +2\left( n-1\right)
^{2}\alpha ^{2}}.
\end{equation*}

Operating and simplifying we obtain
\begin{equation*}
\left( 2\alpha -1\right) \left( 1+2\alpha \left( n-1\right) -2n\right)
^{2}\left( n-1\right) ^{2}\left( \alpha ^{2}\left( n-1\right) ^{2}-2\alpha
\left( n-1\right) +2n-1\right) =0.
\end{equation*}
The solutions of the last equation are
\begin{equation*}
\alpha =\frac{1}{2},\qquad \alpha =\frac{2n-1}{2\left( n-1\right) }, \qquad
\alpha =\frac{1\pm \sqrt{2\left( 1-n\right) }}{n-1}.
\end{equation*}
The parameters  $\alpha =\frac{1}{2}$ and $\alpha =\frac{2n-1}{ 2n-2 }$ are degenerated cases with no free critical points (see Lemma~\ref{lemmahalley}).

We conclude that critical points are the preimages of $z=0$ if and only if
$\alpha =\frac{1\pm \sqrt{2\left( 1-n\right) }}{n-1}$.
\endproof

Notice that the
values of $\alpha $ introduced in Lemma~\ref{lemmaprecrit} coincide, in the parameter space, with  two symmetric
points of the Collar of the Cat set, where the main  ramifications appear (see Figure~\ref{fig:paramzoombif}).

\section{Numerical studies and conclusions}\label{sec_Numerico}

The goal of this section is to perform a numerical study on the Chebyshev-Halley family. We first analyse the parameter planes near the bifurcation parameters described in Section~\ref{sec_bifurcation}. Afterwards we study the dynamical planes of the family. We focus on the dynamics of maps with a disconnected Julia set  and the evolution when $n$ grows of the  dynamics of some relevant members of the Chebyshev-Halley family.

\subsection{Parameter planes}\label{sec_Numerico_param}

 The drawings of parameter planes of the operators $O_{n,\alpha}$ along the paper are done using a program written in C which works as follows. We take a grid of points ($1500\times1000$ points for Figure~\ref{fig:paramgaton}, $1200\times2000$ points for Figure~\ref{fig:paramzoombif}, and $1500\times1500$ points for Figure~\ref{fig:paramzoomsing}). Then, we associate a parameter $\alpha\in\com$ to each point of the grid.
  The range of the real and the imaginary part of the parameters $\alpha$ is indicated in the horizontal and vertical axes of the images, respectively. For fixed $\alpha$ we compute one of the $n$ free critical points and iterate it up to 150 times. At each iteration we verify if the iterated point $w$ has converged to any of the $n$th-roots of the unity (we verify if $|w-\xi|<10^{-4}$, for any $n$th-root of the unity $\xi$). If the critical orbit converges to a root of the unity, we colour the corresponding point with an scaling from red (fast convergence) to yellow,  green, blue, purple and to grey (slow convergence). If after 150 iterations the orbit of the critical point has not converged to a root, we plot the point in black. In Figure~\ref{fig:paramgaton} we show the parameter planes of the operator $O_{n,\alpha}$ for different values of $n$.

\begin{figure}[hbt!]
\centering
\subfigure[\small{$n=10$} ]{
    \begin{tikzpicture}
    \begin{axis}[width=235pt, axis equal image, scale only axis,  enlargelimits=false, axis on top, %xtick={-3,-2,-1,0,1,2,3}, ytick={-3,-2,-1,0,1,2,3}
    ]
      \addplot graphics[xmin=-0.5,xmax=0.7,ymin=-1,ymax=1] {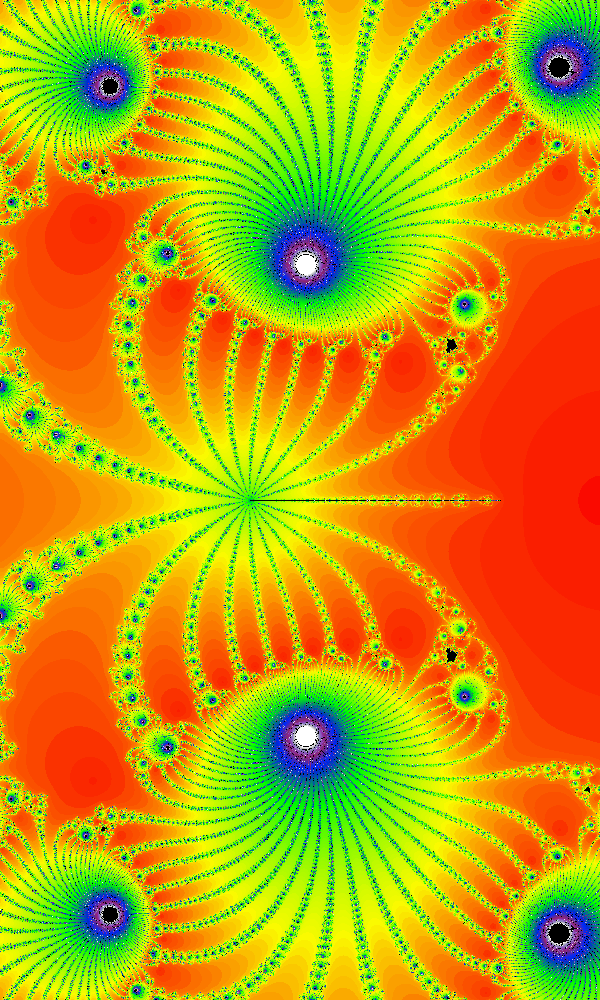};
    \end{axis}
  \end{tikzpicture}
    }
  \subfigure[\small{$n=25$} ]{
    \begin{tikzpicture}
    \begin{axis}[width=235pt, axis equal image, scale only axis,  enlargelimits=false, axis on top, %xtick={-3,-2,-1,0,1,2,3}, ytick={-3,-2,-1,0,1,2,3}
    ]
      \addplot graphics[xmin=-0.5,xmax=0.7,ymin=-1,ymax=1]{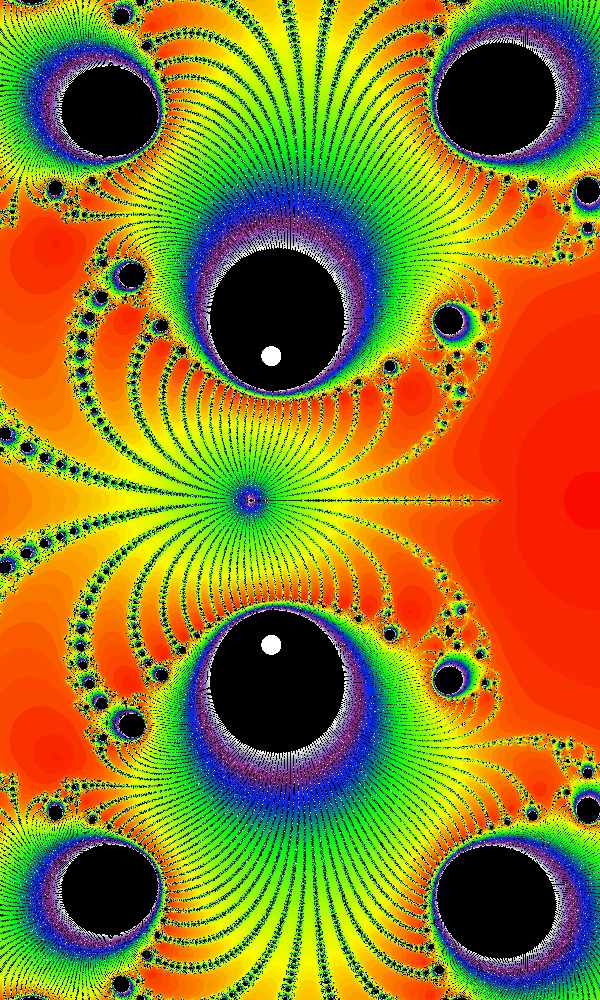};
    \end{axis}
  \end{tikzpicture}
    }
    \subfigure[\small{$n=100$}]{
    	\begin{tikzpicture}
    		\begin{axis}[width=235pt, axis equal image, scale only axis,  enlargelimits=false, axis on top, %xtick={-3,-2,-1,0,1,2,3}, ytick={-3,-2,-1,0,1,2,3}
    ]
      \addplot graphics[xmin=-0.5,xmax=0.7,ymin=-1,ymax=1] {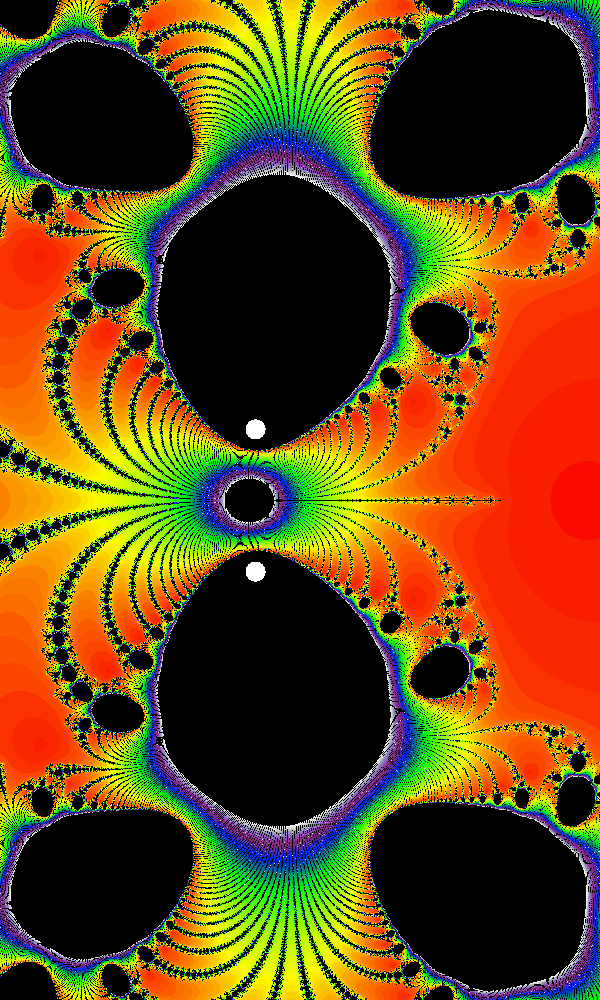};
    		\end{axis}
  		\end{tikzpicture}	
  }
      \caption{\small Zooms in on the parameter planes near the main bifurcation points of the Collar.\label{fig:paramzoombif}}
    \end{figure}

The aim of this subsection is to analyse the parameter planes of the family near the bifurcation parameters found in Section~\ref{sec_bifurcation}. In Figure~\ref{fig:paramzoombif} we show  zooms in on the parameter planes for several $n$. The ranges of $\alpha$  include the bifurcation parameters $\alpha=0$ (Lemma~\ref{solofijos}), $\alpha=1/2$ (Lemma~\ref{lemmahalley}) and  $\alpha_{\pm} =\frac{1\pm \sqrt{2\left(
1-n\right) }}{n-1}$ (Lemma~\ref{lemmaprecrit}). The parameter $\alpha=1/2$ (Halley's method) appears in the figures as a tip of an antenna which joins it with $\alpha=0$ (Chebyshev's method). We observe that the bifurcation structure at $\alpha=1/2$ is rather simple, which could indicate stability of the family for parameters near  Halley's  value. On the other hand, a more complex bifurcation structure appears at Chebyshev's parameter ($\alpha=0$). Recall that for this parameter all critical points collide at $z=0$, which is a preimage of the fixed point $z=\infty$. Indeed, for $n=25$ we observe a black disk of parameters for which the orbit of the critical point has not converged to a root after 150 iterates.
 This black disk does not correspond to any stable behaviour, but to parameters for which the critical orbit would require more iterates to converge to a root. Indeed,  this disk of parameters decreases when we increase the number of iterations. A similar situation occurs near the parameters $\alpha_{\pm}$, which are marked with  white points in Figure~\ref{fig:paramzoombif}. These parameters correspond to operators for which the critical points are preimages of $z=0$.
  Several of these black regions not corresponding to stable behaviour appear when drawing the parameter planes for big $n$, being the bigger ones around $\alpha_{\pm}$. Numerical experiments seem to indicate that the biggest black regions not related to stable behaviour appear around parameters for which the free critical points are eventually mapped under iteration of the corresponding operator onto $z=0$. In Section~\ref{sec_Numerico_dynam} we show the dynamics of $O_{n,\alpha}$ for all the bifurcation parameters discussed in this paragraph in order to analyse if they have to be avoided or if they present good dynamical behaviour.

\begin{figure}[hbt!]
\centering
\subfigure[\small{$n=10$} ]{
    \begin{tikzpicture}
    \begin{axis}[width=200pt, axis equal image, scale only axis,  enlargelimits=false, axis on top, %xtick={-3,-2,-1,0,1,2,3}, ytick={-3,-2,-1,0,1,2,3}
    ]
      \addplot graphics[xmin=1,xmax=1.10,ymin=-0.05,ymax=0.05] {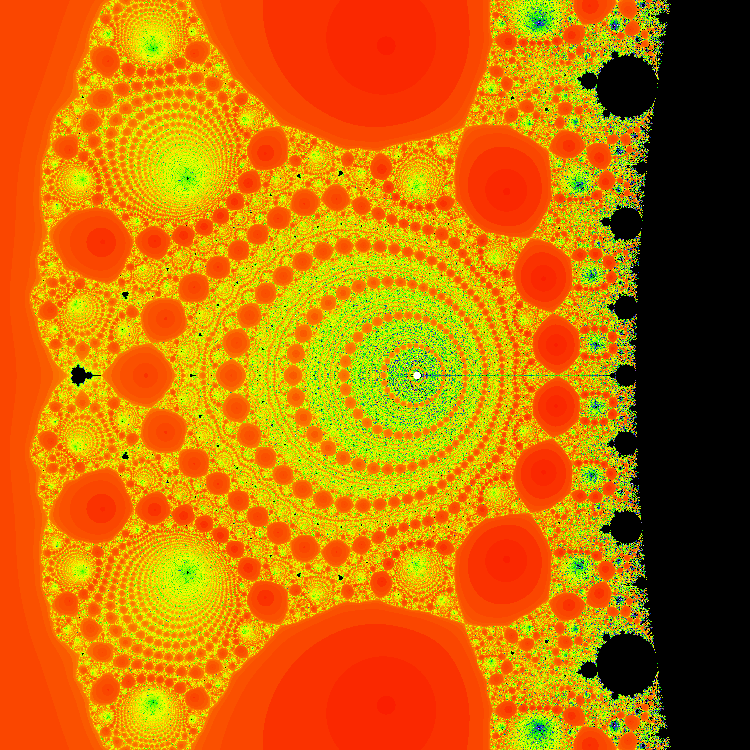};
    \end{axis}
  \end{tikzpicture}
    }
  \subfigure[\small{$n=25$} ]{
    \begin{tikzpicture}
    \begin{axis}[width=200pt, axis equal image, scale only axis,  enlargelimits=false, axis on top, %xtick={-3,-2,-1,0,1,2,3}, ytick={-3,-2,-1,0,1,2,3}
    ]
      \addplot graphics[xmin=0.95,xmax=1.05,ymin=-0.05,ymax=0.05]{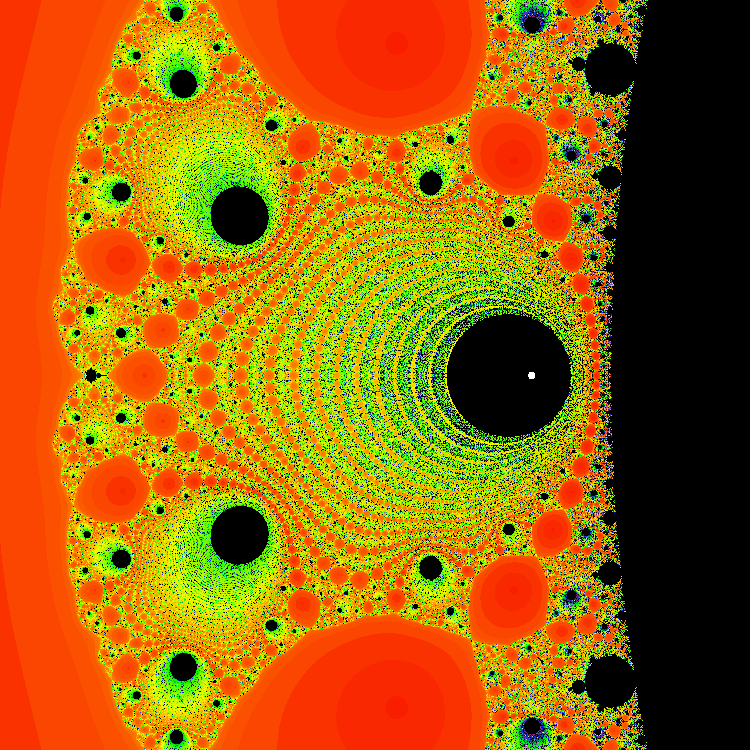};
    \end{axis}
  \end{tikzpicture}
    }
      \caption{\small Zooms in on the parameter planes near the degeneracy parameter $\alpha=\frac{2n-1}{2n-2}$.\label{fig:paramzoomsing}}
    \end{figure}

In Figure~\ref{fig:paramzoomsing} we show zooms in on the parameter plane near the parameter $\alpha=\frac{2n-1}{2n-2}$. If $n>2$, this bifurcation parameter corresponds to a lower degree operator for which $\{0,\infty\}$ is a superattracting cycle. Therefore, this is a parameter to be avoided. In Figure~\ref{fig:paramzoomsing} we indicate it with a small white point.
We observe how, for $n=10$ and $n=25$, this parameter is the center of a cascade of bifurcations. Similarly to what happens in the parameter plane near the other bifurcation parameters, this cascade of bifurcations may lead to a black region which does not correspond to stable behaviour (see Figure~\ref{fig:paramzoomsing} (b)). This cascade of bifurcations  takes place in a rather small region which is adjacent to the hyperbolic component of parameters which contains $\alpha=\frac{2n-1}{3n-3}.$ For fixed $n$, this parameter corresponds to the single operator $O_{n,\alpha}$ with order of convergence 4 to the roots (see Proposition~\ref{grado4}). Moreover, the parameters within this hyperbolic component have connected Julia set (see Corollary~\ref{corolario_collar}), which a priori makes of them desirable parameters. However, the hyperbolic component seems to become smaller and to move slightly to the left when we increase $n$ (see Figure~\ref{fig:paramgaton}). In particular, the parameter $\alpha=1$, which has order of convergence 4 to the roots for $n=2$, falls into the cascade of bifurcations for $n=25$ (see Figure~\ref{fig:paramzoomsing}). In Section~\ref{sec_Numerico_dynam} we show how the dynamics of $\alpha=1$ evolves when we increase $n$ and it approaches and enters the cascade of bifurcations.

\subsection{Dynamical planes}\label{sec_Numerico_dynam}

In this section we provide numerical drawings of dynamical planes of the operators $O_{n,\alpha}$. The drawings of dynamical planes are done using a program written in C which works as follows. We take a grid of  $1500\times1500$ and we associate a point $z\in\com$ to each point of the grid. The range of the real and the imaginary part of the points $z$ is indicated in the horizontal and vertical axes of the images, respectively. Then, we iterate the point $z$ up to 75 times. At each iteration we verify if the iterated point $w$ has converged to any of the $n$th-roots of the unity (we verify if $|w-\xi|<10^{-4}$, for any $n$th-root of the unity $\xi$). If it converges to any root of the unity, we colour the corresponding point in the grid with an scaling from red (fast convergence) to yellow,  green, blue, purple and to grey (slow convergence). If after 75 iterations the orbit has not converged to a root, we plot the point in black.

\begin{figure}[hbt!]
\centering
\subfigure[\small{$n=3, \alpha=0.2+1.4i$} ]{
    \begin{tikzpicture}
    \begin{axis}[width=195pt, axis equal image, scale only axis,  enlargelimits=false, axis on top, xtick={-3,-2,-1,0,1,2,3}, ytick={-3,-2,-1,0,1,2,3} ]
      \addplot graphics[xmin=-3,xmax=3,ymin=-3,ymax=3] {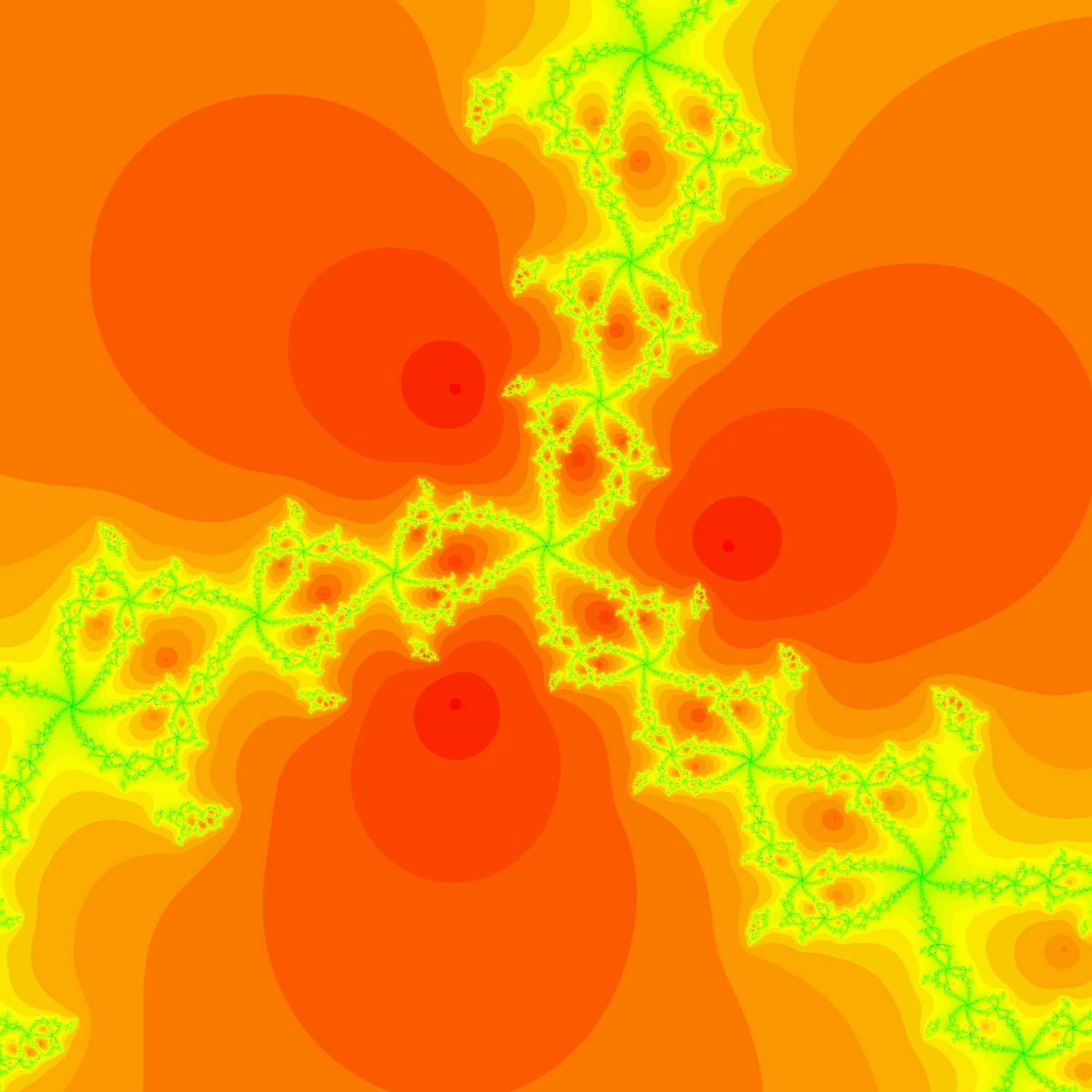};
    \end{axis}
  \end{tikzpicture}
    }
  \subfigure[\small{$n=3, \alpha=2i$} ]{
    \begin{tikzpicture}
    \begin{axis}[width=195pt, axis equal image, scale only axis,  enlargelimits=false, axis on top, xtick={-3,-2,-1,0,1,2,3}, ytick={-3,-2,-1,0,1,2,3} ]
      \addplot graphics[xmin=-3,xmax=3,ymin=-3,ymax=3] {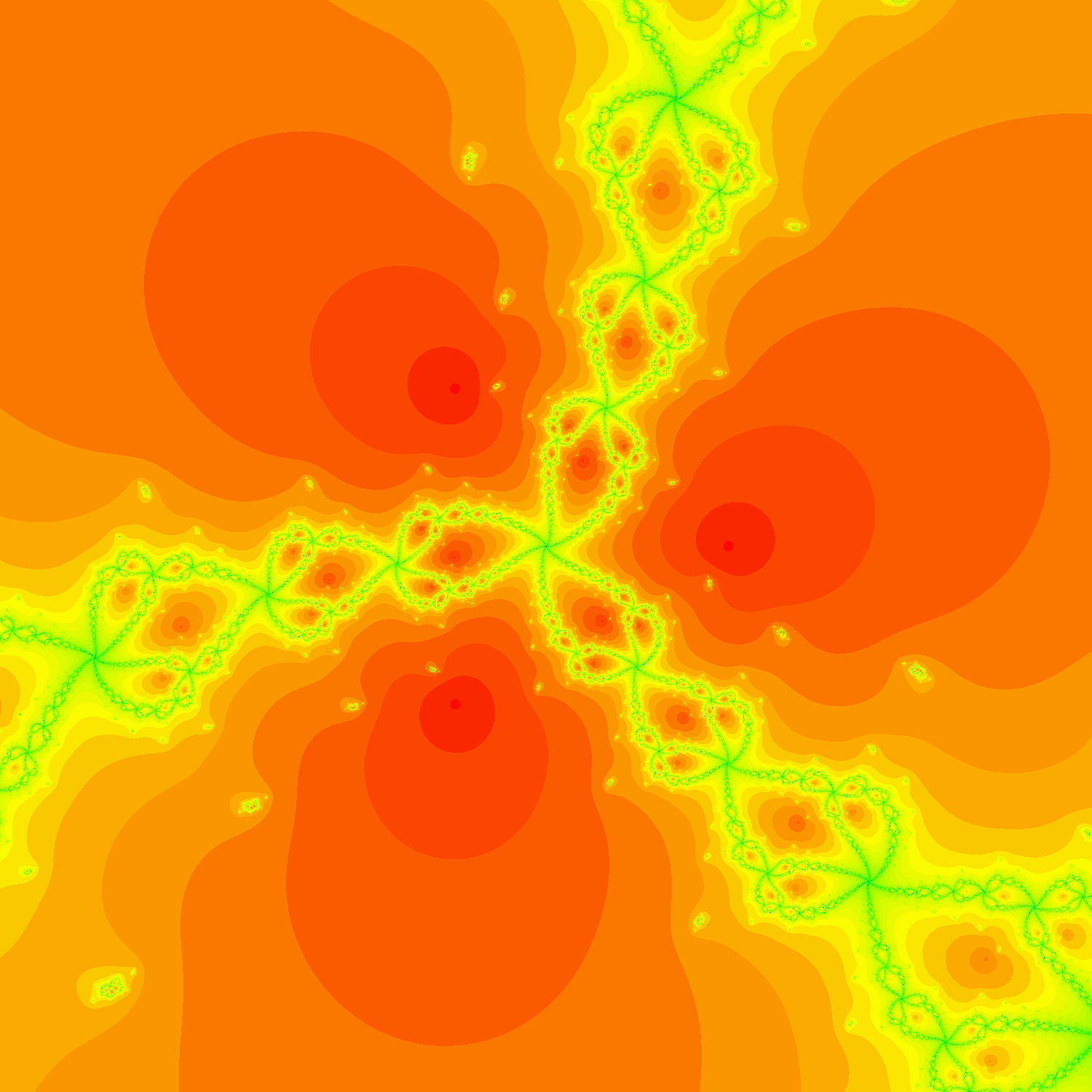};
    \end{axis}
  \end{tikzpicture}
    }
  \subfigure[\small{$n=25, \alpha=0.2+1.4i$} ]{
    \begin{tikzpicture}
    \begin{axis}[width=195pt, axis equal image, scale only axis,  enlargelimits=false, axis on top, xtick={-3,-2,-1,0,1,2,3}, ytick={-3,-2,-1,0,1,2,3} ]
      \addplot graphics[xmin=-3,xmax=3,ymin=-3,ymax=3] {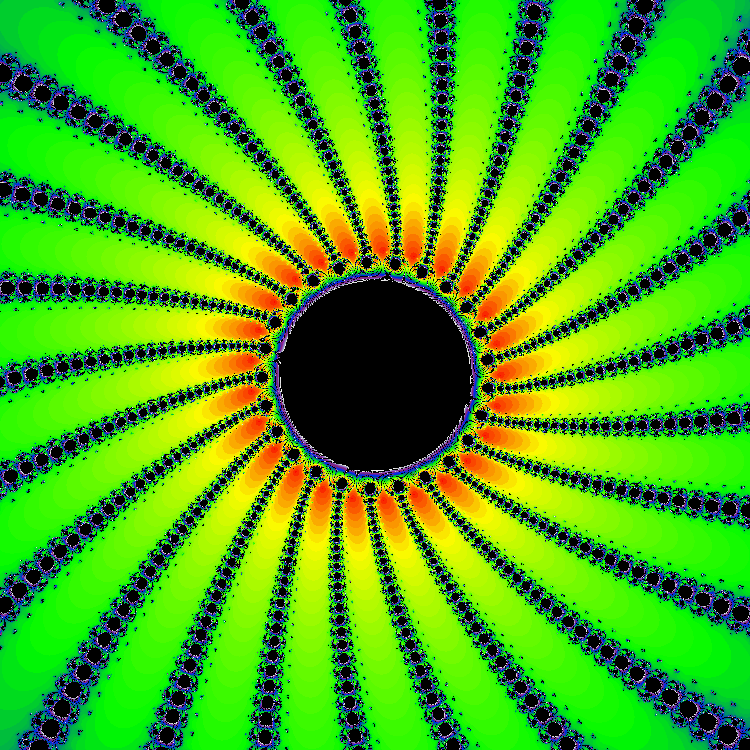};
    \end{axis}
  \end{tikzpicture}
    }
    \subfigure[\small{$n=25, \alpha=2i$}]{
    	\begin{tikzpicture}
    		\begin{axis}[width=195pt, axis equal image, scale only axis,  enlargelimits=false, axis on top, xtick={-3,-2,-1,0,1,2,3}, ytick={-3,-2,-1,0,1,2,3} ]
      			\addplot graphics[xmin=-3,xmax=3,ymin=-3,ymax=3] {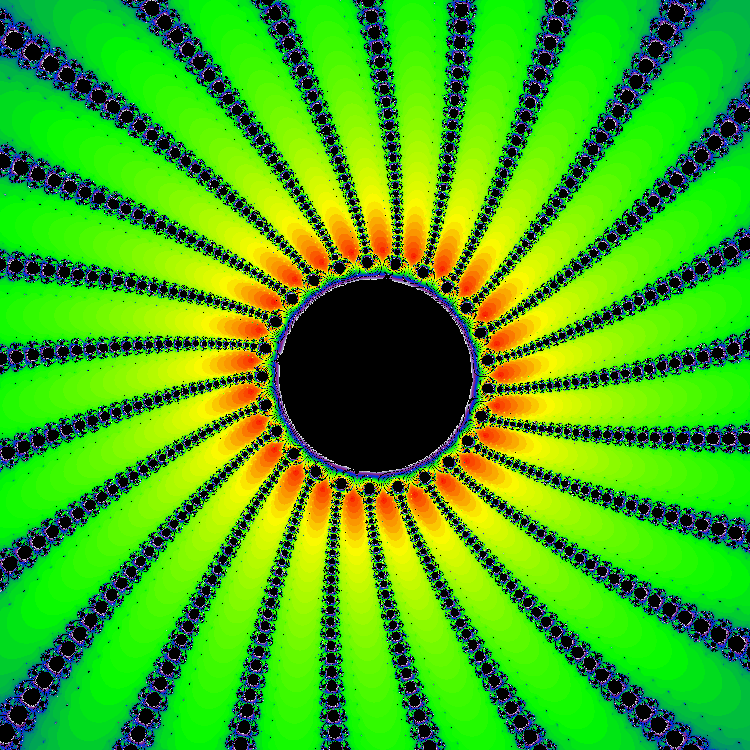};
    		\end{axis}
  		\end{tikzpicture}	
  }
      \caption{\small Dynamical planes of several $O_{n, \alpha}(z)$ with a disconnected Julia set.\label{fig:dynamdiscon}}
    \end{figure}

The goal of this section is to analyse the dynamical behaviour of the operators $O_{n,\alpha}$ for several parameters in order to have a better idea of which parameters provide a good dynamical behaviour. We first investigate numerically for which parameters the Julia set is disconnected and we analyse their dynamics. Before doing so, we recall the concept of hyperbolic component of parameters. A hyperbolic component is a connected set of parameters for which all critical orbits accumulate on attracting or superattracting cycles. The Julia set is stable within hyperbolic components, i.e.\ the Julia set is connected for one parameter of the component if and only if it is connected for all parameters of the component. Hyperbolic components of parameters for which the free critical points belong to the basins of attraction of the roots appear in red in Figure~\ref{fig:paramgaton}.
 In Corollary~\ref{corolario_collar} we have proven that the operators corresponding to the hyperbolic component which contains $\alpha=\frac{2n-1}{3n-3}$ have connected Julia set. Numerical simulations seem to indicate that the other red hyperbolic components bounded by the Collar of the Cat set correspond to parameters for which the free critical points belong to the basins of attraction of the roots but not to their immediate basins of attraction and, hence, correspond to operators with a connected Julia set (see Theorem~\ref{thmconJulia}).
  The only hyperbolic component corresponding to operators with disconnected Julia set seems to be the red unbounded component on the complement of the Collar (see Figure~\ref{fig:paramgaton}). In Figure~\ref{fig:dynamdiscon}, Figure~\ref{fig:dynam3} (f), and Figure~\ref{fig:dynam25} (f) we show  the dynamical planes of several parameters within this hyperbolic component for $n=3$ and $n=25$. We take the parameter $\alpha=0.2+1.4i$, which is close to the Collar, and the parameters $\alpha=2i$ and $\alpha=4i$, which are farther away from the Collar. For $\alpha=0.2+1.4i$ the holes in the basins of attraction are relatively big and are easy to observe. For the parameters $\alpha=2i$ and $\alpha=4i$ these holes are much more difficult to see since they become smaller. However, the convergence to the roots within this last parameters is slower. Indeed, for $n=3$ we observe how the basins of attractions to the roots have a more intense red tonality for $\alpha=0.2+1.4i$, which indicates fast convergence. For $n=25$ we can observe how the basins of attraction of the roots acquire a blueish tonality for $\alpha=2i$ and, particularly, for $\alpha=4i$, which indicates slower convergence to the roots. We may conclude that, within this hyperbolic component, parameters close to the Collar have bigger holes on the basins of attraction of the roots, but the convergence to the roots is faster.

\begin{figure}[p]
\centering
\subfigure[\small{$\alpha=5/6$} ]{
    \begin{tikzpicture}
    \begin{axis}[width=195pt, axis equal image, scale only axis,  enlargelimits=false, axis on top, xtick={-3,-2,-1,0,1,2,3}, ytick={-3,-2,-1,0,1,2,3} ]
      \addplot graphics[xmin=-3,xmax=3,ymin=-3,ymax=3] {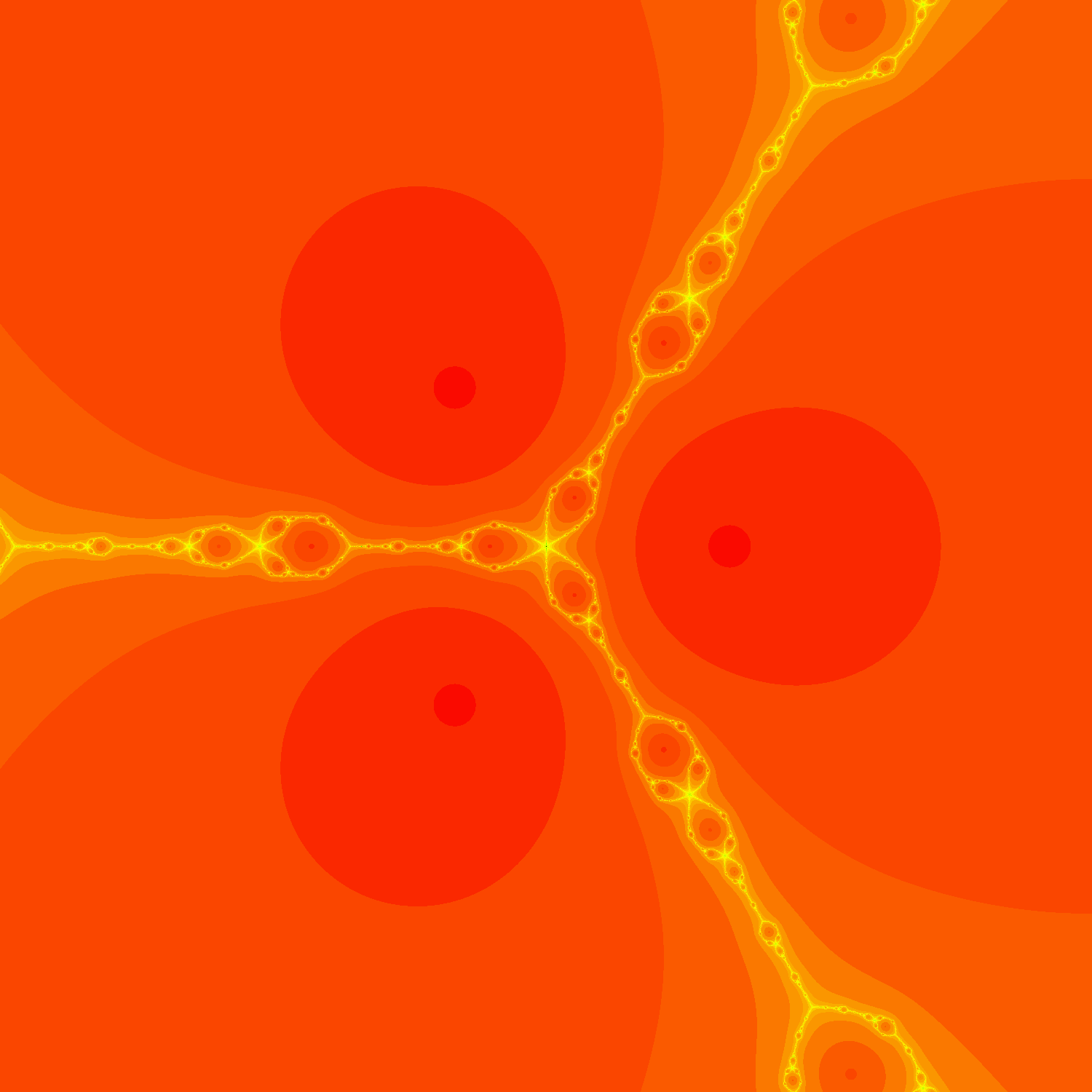};
    \end{axis}
  \end{tikzpicture}
    }
  \subfigure[\small{$\alpha=0.5$, Halley's method} ]{
    \begin{tikzpicture}
    \begin{axis}[width=195pt, axis equal image, scale only axis,  enlargelimits=false, axis on top, xtick={-3,-2,-1,0,1,2,3}, ytick={-3,-2,-1,0,1,2,3} ]
      \addplot graphics[xmin=-3,xmax=3,ymin=-3,ymax=3] {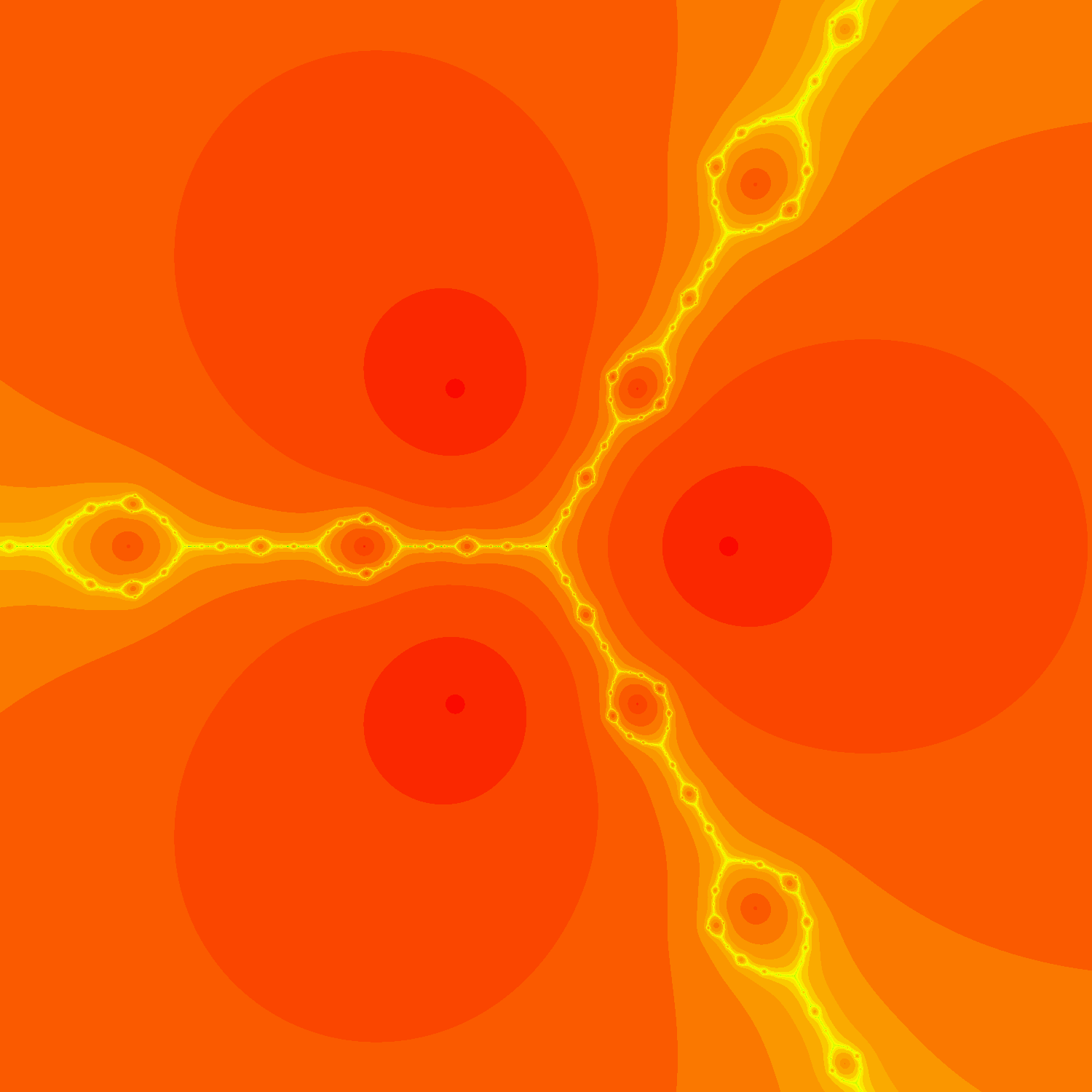};
    \end{axis}
  \end{tikzpicture}
    }
    \subfigure[\small{$\alpha=0$, Chebyshev's method}]{
    	\begin{tikzpicture}
    		\begin{axis}[width=195pt, axis equal image, scale only axis,  enlargelimits=false, axis on top, xtick={-3,-2,-1,0,1,2,3}, ytick={-3,-2,-1,0,1,2,3} ]
      			\addplot graphics[xmin=-3,xmax=3,ymin=-3,ymax=3] {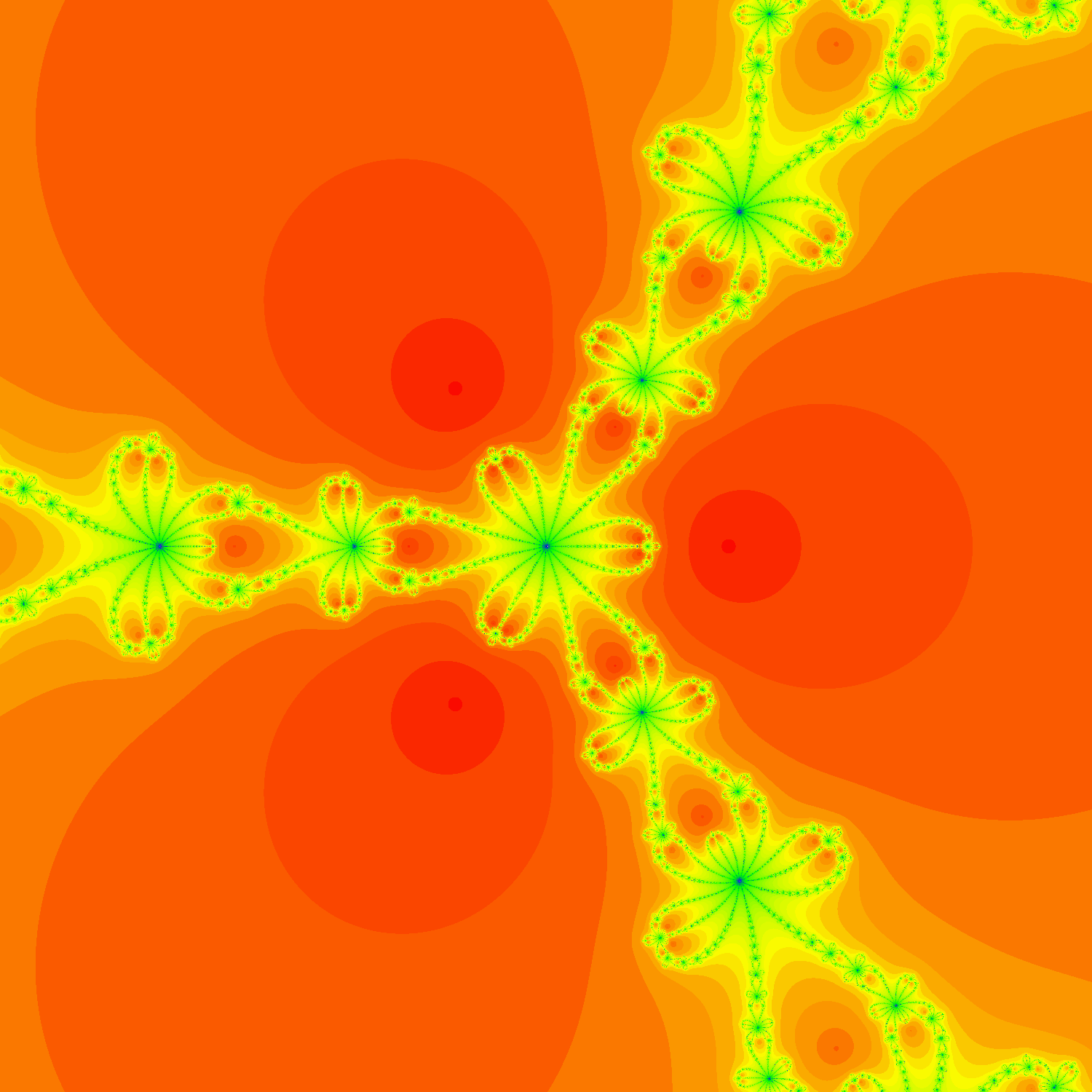};
    		\end{axis}
  		\end{tikzpicture}	
  }
  \subfigure[\small{$\alpha=1$, Super-Halley method} ]{
    \begin{tikzpicture}
    \begin{axis}[width=195pt, axis equal image, scale only axis,  enlargelimits=false, axis on top, xtick={-3,-2,-1,0,1,2,3}, ytick={-3,-2,-1,0,1,2,3} ]
      \addplot graphics[xmin=-3,xmax=3,ymin=-3,ymax=3] {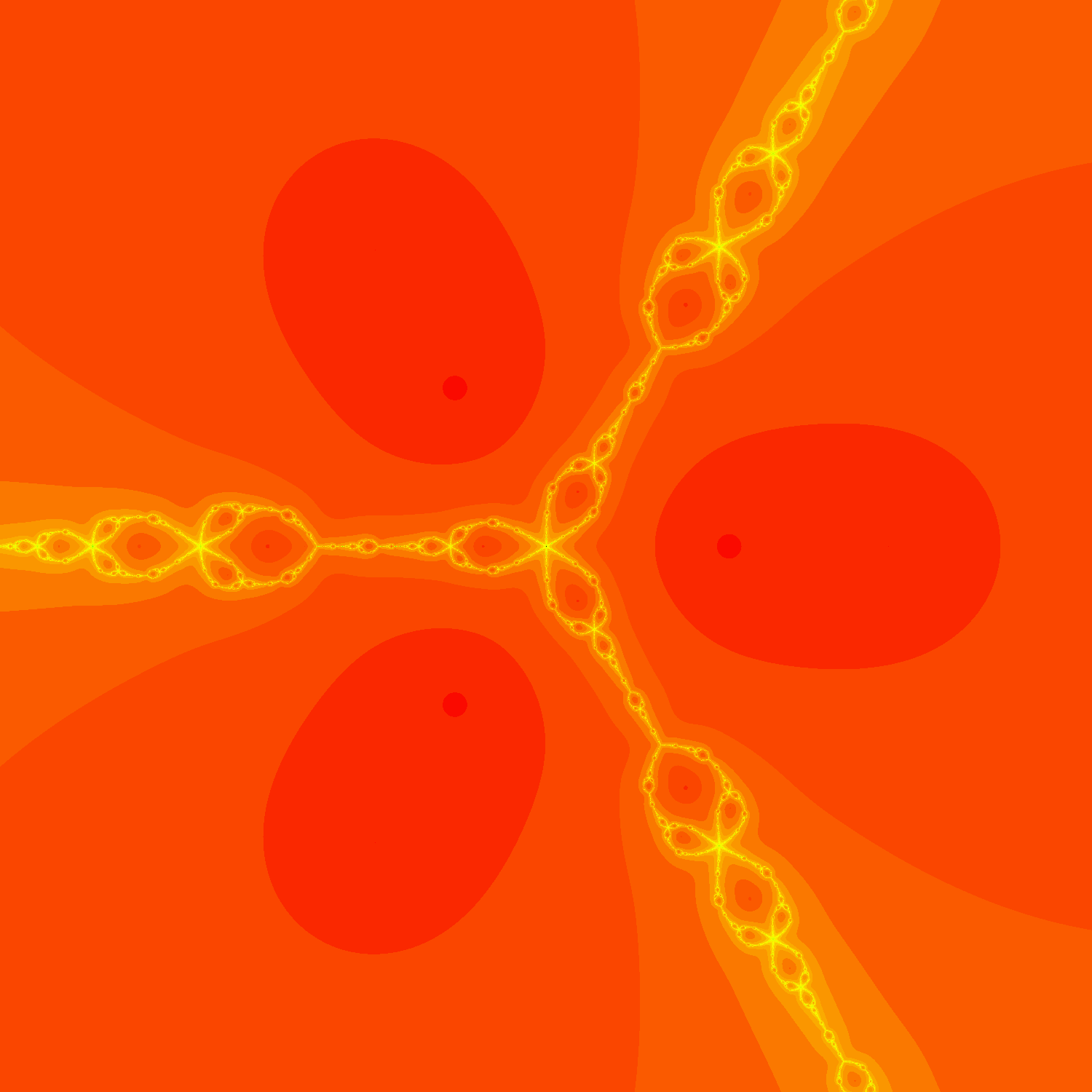};
    \end{axis}
  \end{tikzpicture}
    }
    \subfigure[\small{$\alpha=(1+i\sqrt{4})/2$}]{
    	\begin{tikzpicture}
    		\begin{axis}[width=195pt, axis equal image, scale only axis,  enlargelimits=false, axis on top, xtick={-3,-2,-1,0,1,2,3}, ytick={-3,-2,-1,0,1,2,3} ]
      			\addplot graphics[xmin=-3,xmax=3,ymin=-3,ymax=3] {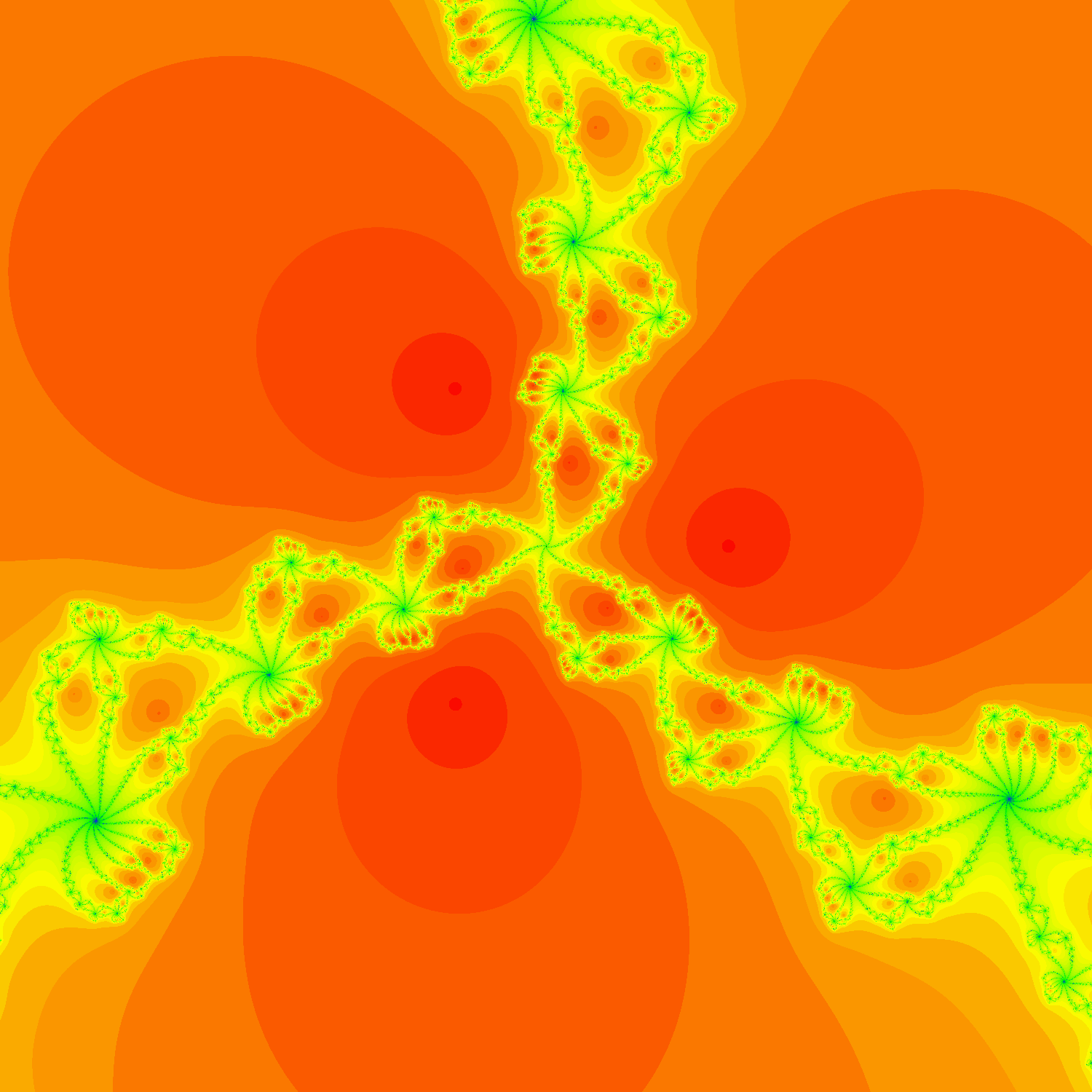};
    		\end{axis}
  		\end{tikzpicture}	
  }
  \subfigure[\small{$\alpha=4i$} ]{
    \begin{tikzpicture}
    \begin{axis}[width=195pt, axis equal image, scale only axis,  enlargelimits=false, axis on top,  xtick={-3,-2,-1,0,1,2,3}, ytick={-3,-2,-1,0,1,2,3}  ]
      \addplot  graphics[xmin=-3,xmax=3,ymin=-3,ymax=3] {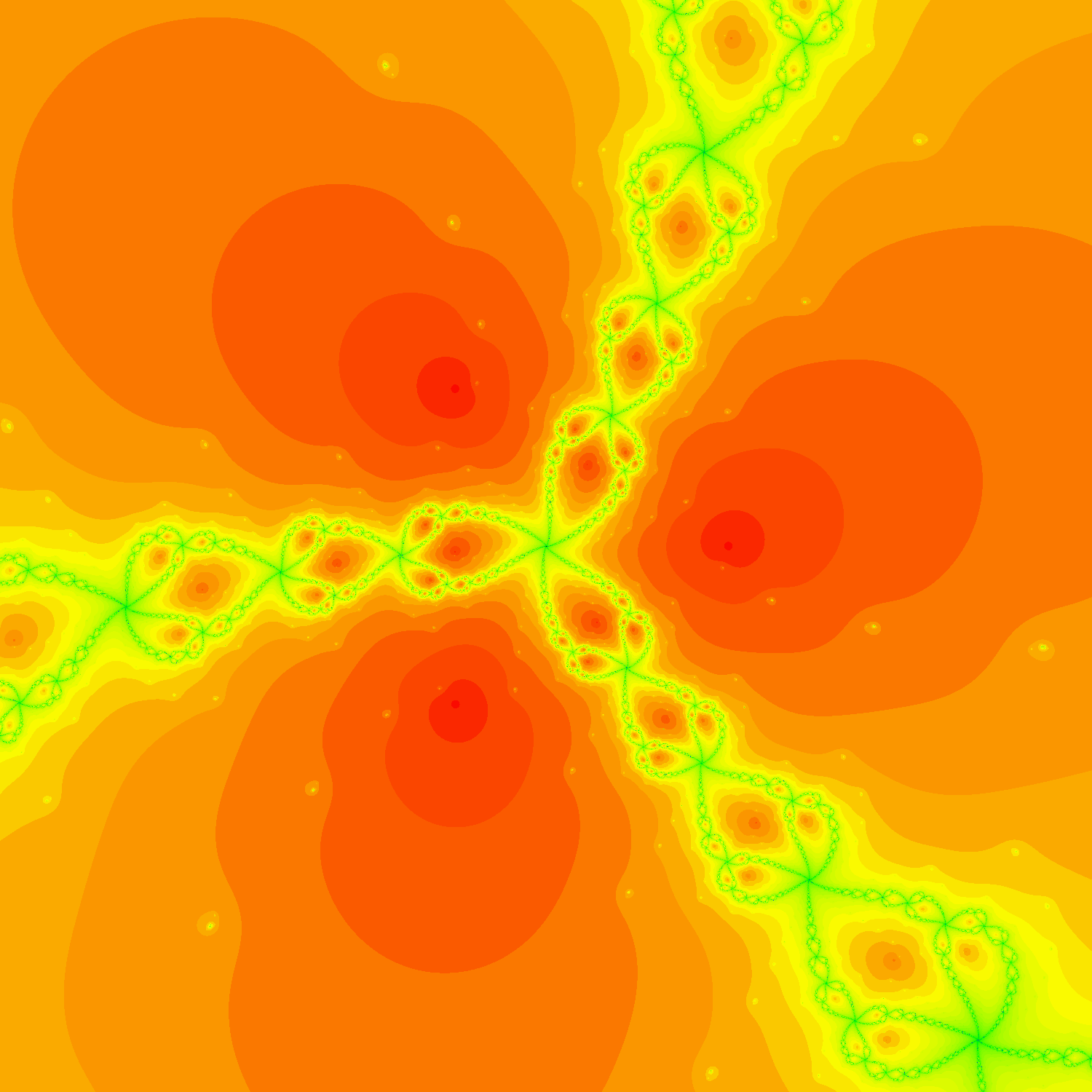};
    \end{axis}
  \end{tikzpicture}
    }
      \caption{\small Dynamical planes of $O_{n, \alpha}(z)$ for  $n=3$ and different values of $\alpha$. \label{fig:dynam3}}
    \end{figure}

\begin{figure}[p]
\centering
\subfigure[\small{$\alpha=19/27$} ]{
    \begin{tikzpicture}
    \begin{axis}[width=195pt, axis equal image, scale only axis,  enlargelimits=false, axis on top, xtick={-3,-2,-1,0,1,2,3}, ytick={-3,-2,-1,0,1,2,3} ]
      \addplot graphics[xmin=-3,xmax=3,ymin=-3,ymax=3] {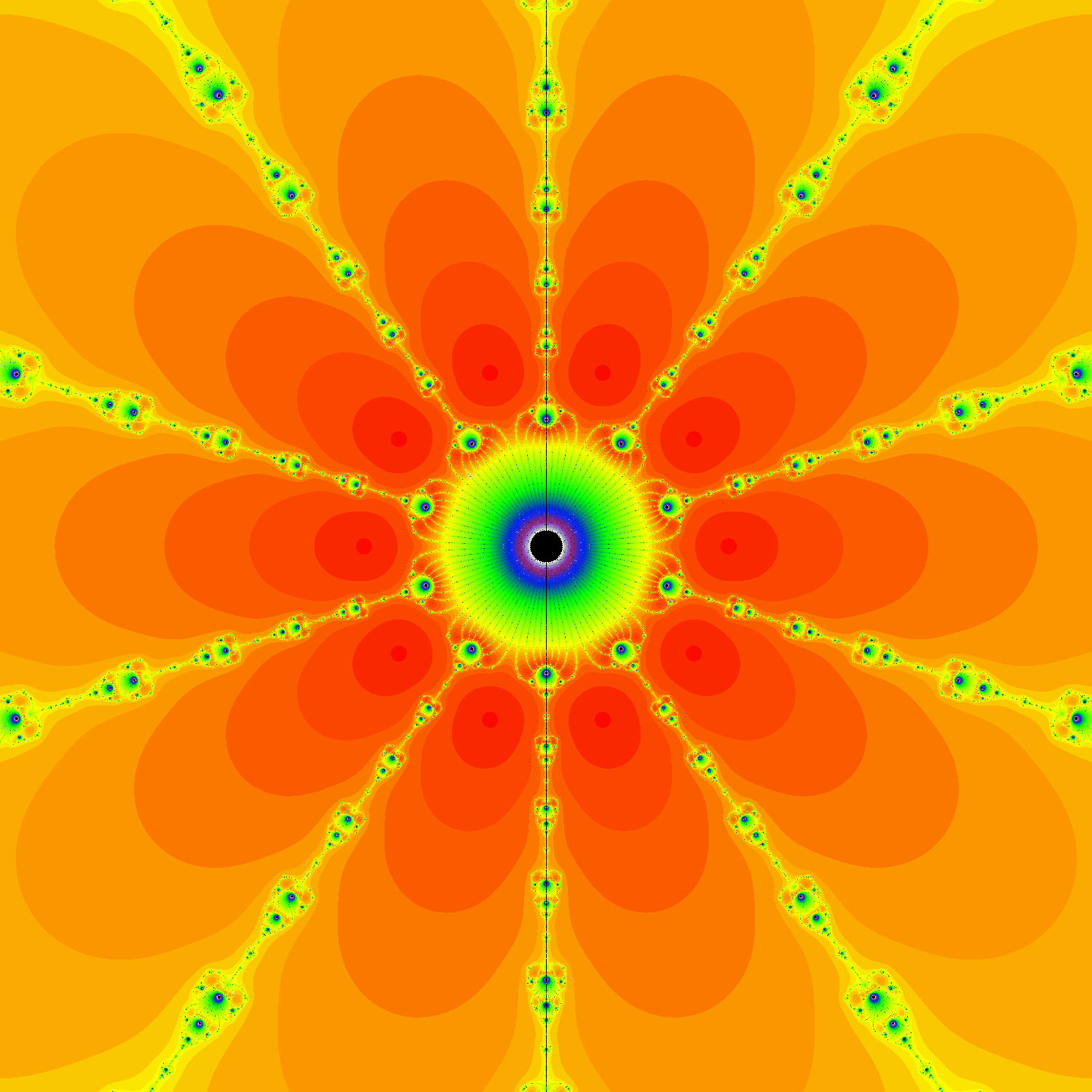};
    \end{axis}
  \end{tikzpicture}
    }
  \subfigure[\small{$\alpha=0.5$, Halley's method} ]{
    \begin{tikzpicture}
    \begin{axis}[width=195pt, axis equal image, scale only axis,  enlargelimits=false, axis on top, xtick={-3,-2,-1,0,1,2,3}, ytick={-3,-2,-1,0,1,2,3} ]
      \addplot graphics[xmin=-3,xmax=3,ymin=-3,ymax=3] {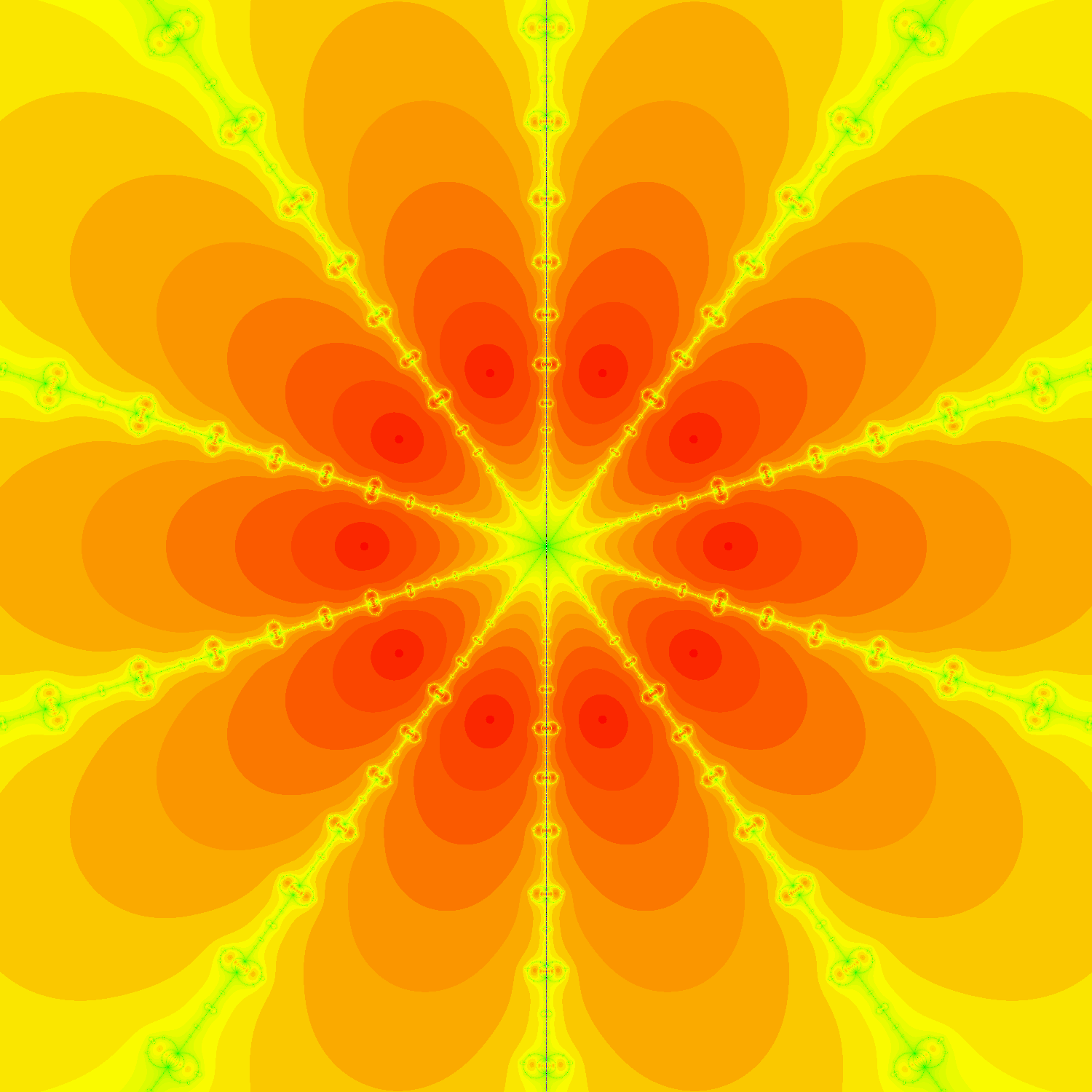};
    \end{axis}
  \end{tikzpicture}
    }
    \subfigure[\small{$\alpha=0$, Chebyshev's method}]{
    	\begin{tikzpicture}
    		\begin{axis}[width=195pt, axis equal image, scale only axis,  enlargelimits=false, axis on top, xtick={-3,-2,-1,0,1,2,3}, ytick={-3,-2,-1,0,1,2,3} ]
      			\addplot graphics[xmin=-3,xmax=3,ymin=-3,ymax=3] {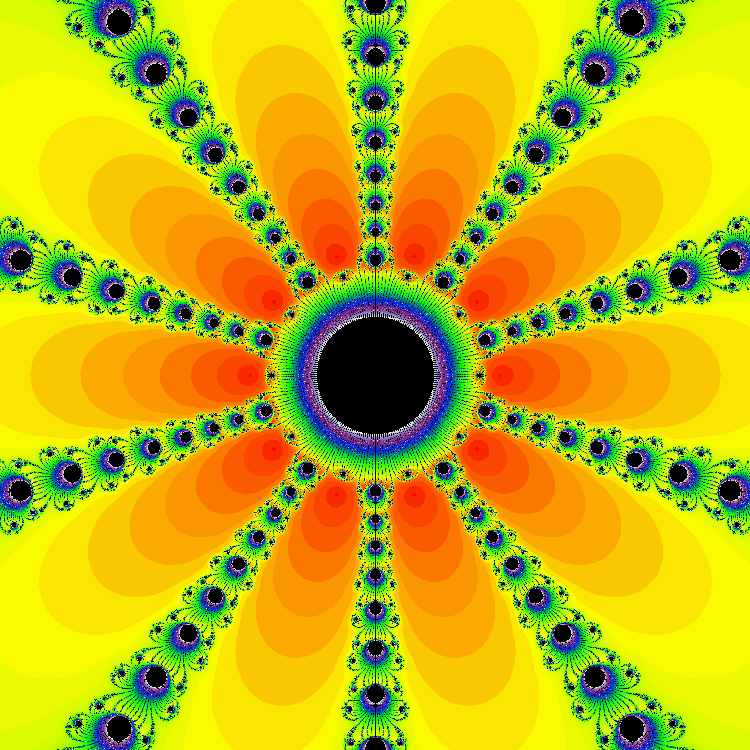};
    		\end{axis}
  		\end{tikzpicture}	
  }
  \subfigure[\small{$\alpha=1$, Super-Halley method} ]{
    \begin{tikzpicture}
    \begin{axis}[width=195pt, axis equal image, scale only axis,  enlargelimits=false, axis on top, xtick={-3,-2,-1,0,1,2,3}, ytick={-3,-2,-1,0,1,2,3} ]
      \addplot graphics[xmin=-3,xmax=3,ymin=-3,ymax=3] {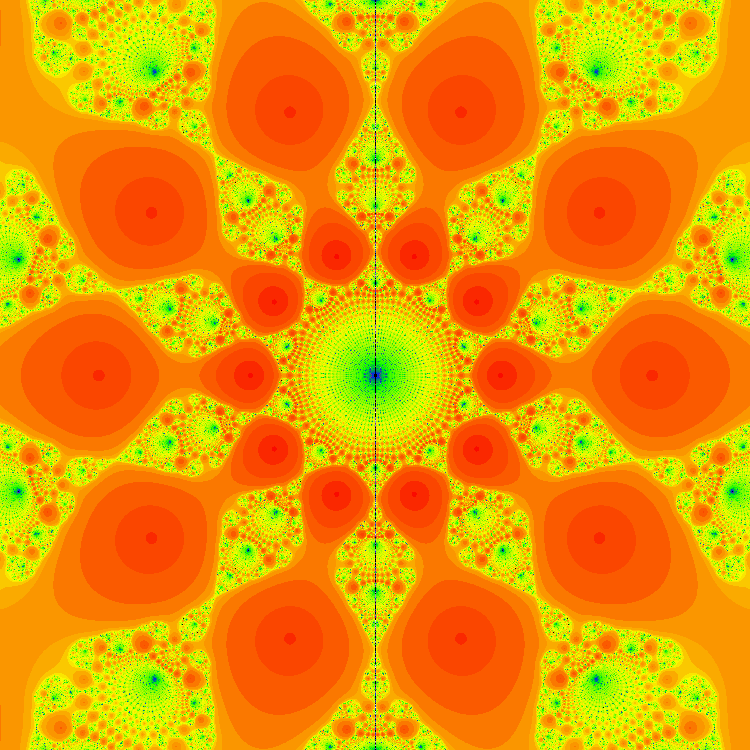};
    \end{axis}
  \end{tikzpicture}
    }
    \subfigure[\small{$\alpha=(1+i\sqrt{18})/9$}]{
    	\begin{tikzpicture}
    		\begin{axis}[width=195pt, axis equal image, scale only axis,  enlargelimits=false, axis on top, xtick={-3,-2,-1,0,1,2,3}, ytick={-3,-2,-1,0,1,2,3} ]
      			\addplot graphics[xmin=-3,xmax=3,ymin=-3,ymax=3] {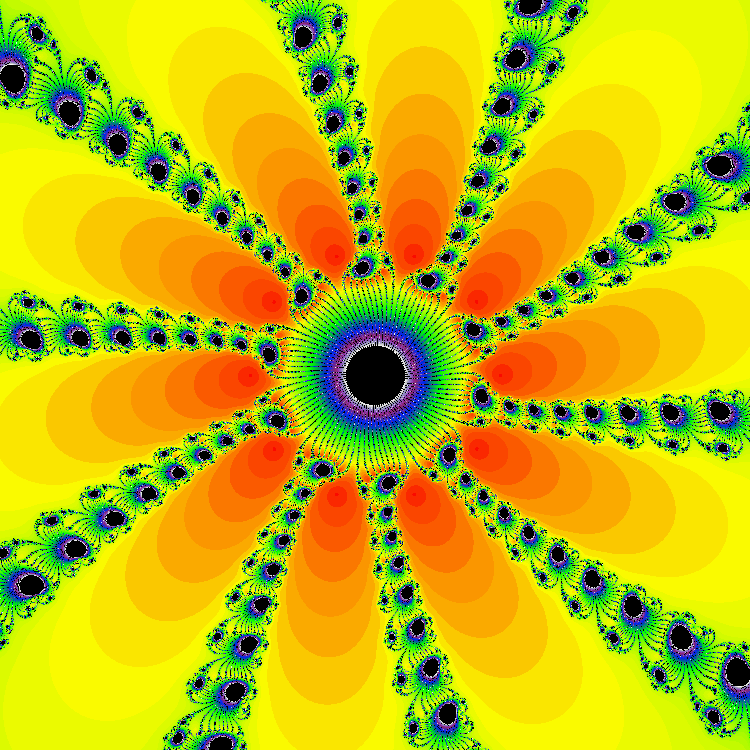};
    		\end{axis}
  		\end{tikzpicture}	
  }
  \subfigure[\small{$\alpha=4i$} ]{
    \begin{tikzpicture}
    \begin{axis}[width=195pt, axis equal image, scale only axis,  enlargelimits=false, axis on top,  xtick={-3,-2,-1,0,1,2,3}, ytick={-3,-2,-1,0,1,2,3}  ]
      \addplot  graphics[xmin=-3,xmax=3,ymin=-3,ymax=3] {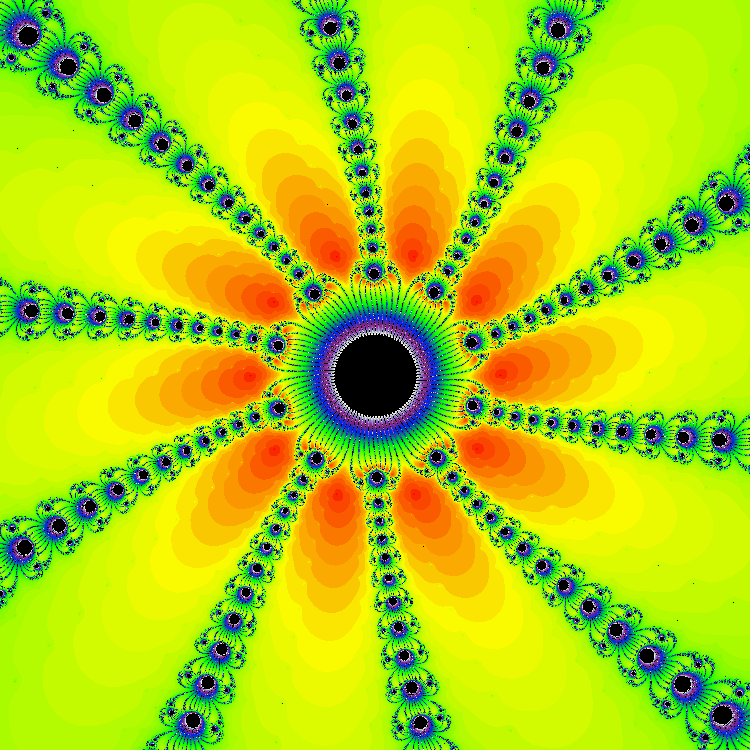};
    \end{axis}
  \end{tikzpicture}
    }
      \caption{\small Dynamical planes of $O_{n, \alpha}(z)$ for  $n=10$ and different values of $\alpha$.\label{fig:dynam10}}
    \end{figure}

\begin{figure}[p]
\centering
\subfigure[\small{$\alpha=49/72$} ]{
    \begin{tikzpicture}
    \begin{axis}[width=195pt, axis equal image, scale only axis,  enlargelimits=false, axis on top, xtick={-3,-2,-1,0,1,2,3}, ytick={-3,-2,-1,0,1,2,3} ]
      \addplot graphics[xmin=-3,xmax=3,ymin=-3,ymax=3] {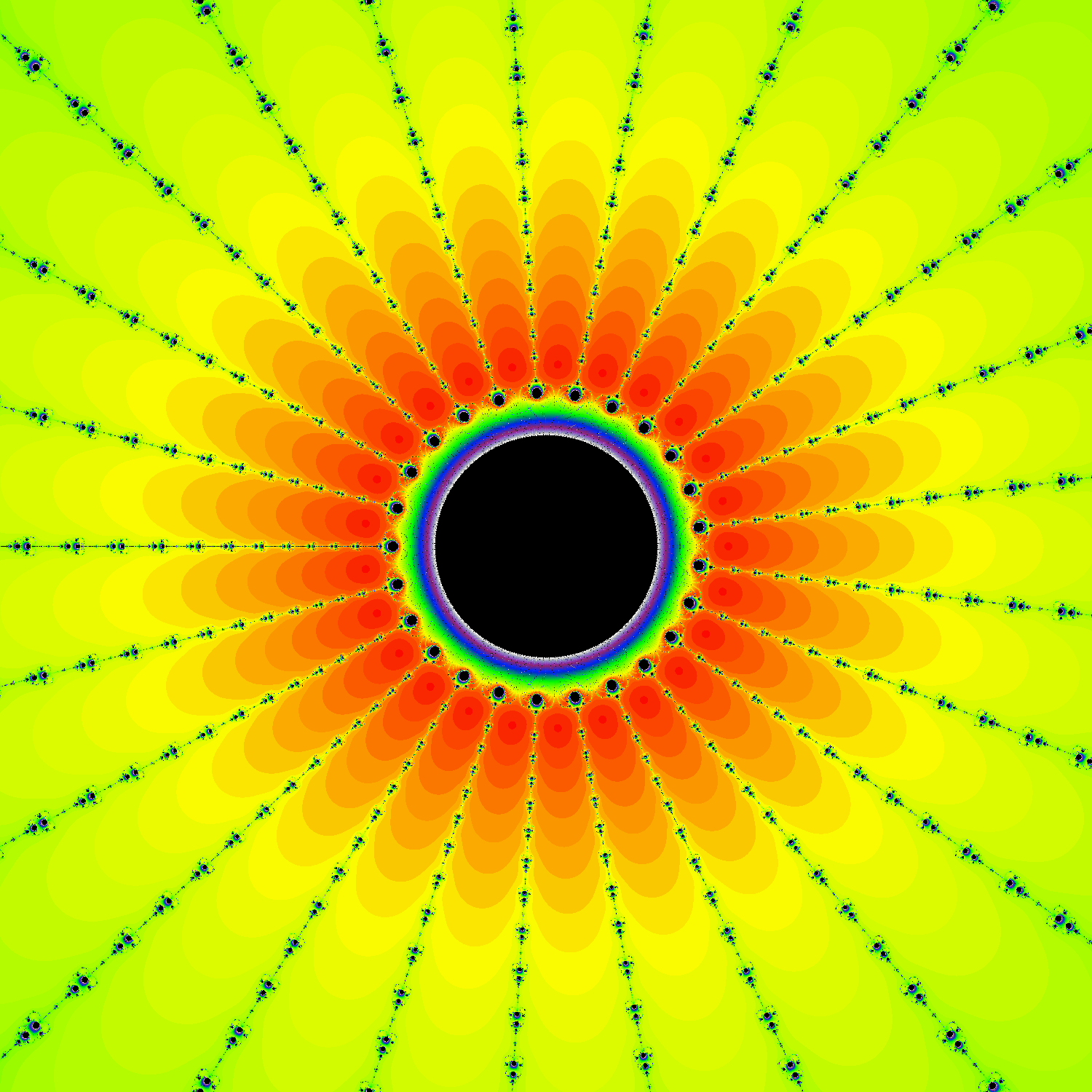};
    \end{axis}
  \end{tikzpicture}
    }
  \subfigure[\small{$\alpha=0.5$, Halley's method} ]{
    \begin{tikzpicture}
    \begin{axis}[width=195pt, axis equal image, scale only axis,  enlargelimits=false, axis on top, xtick={-3,-2,-1,0,1,2,3}, ytick={-3,-2,-1,0,1,2,3} ]
      \addplot graphics[xmin=-3,xmax=3,ymin=-3,ymax=3] {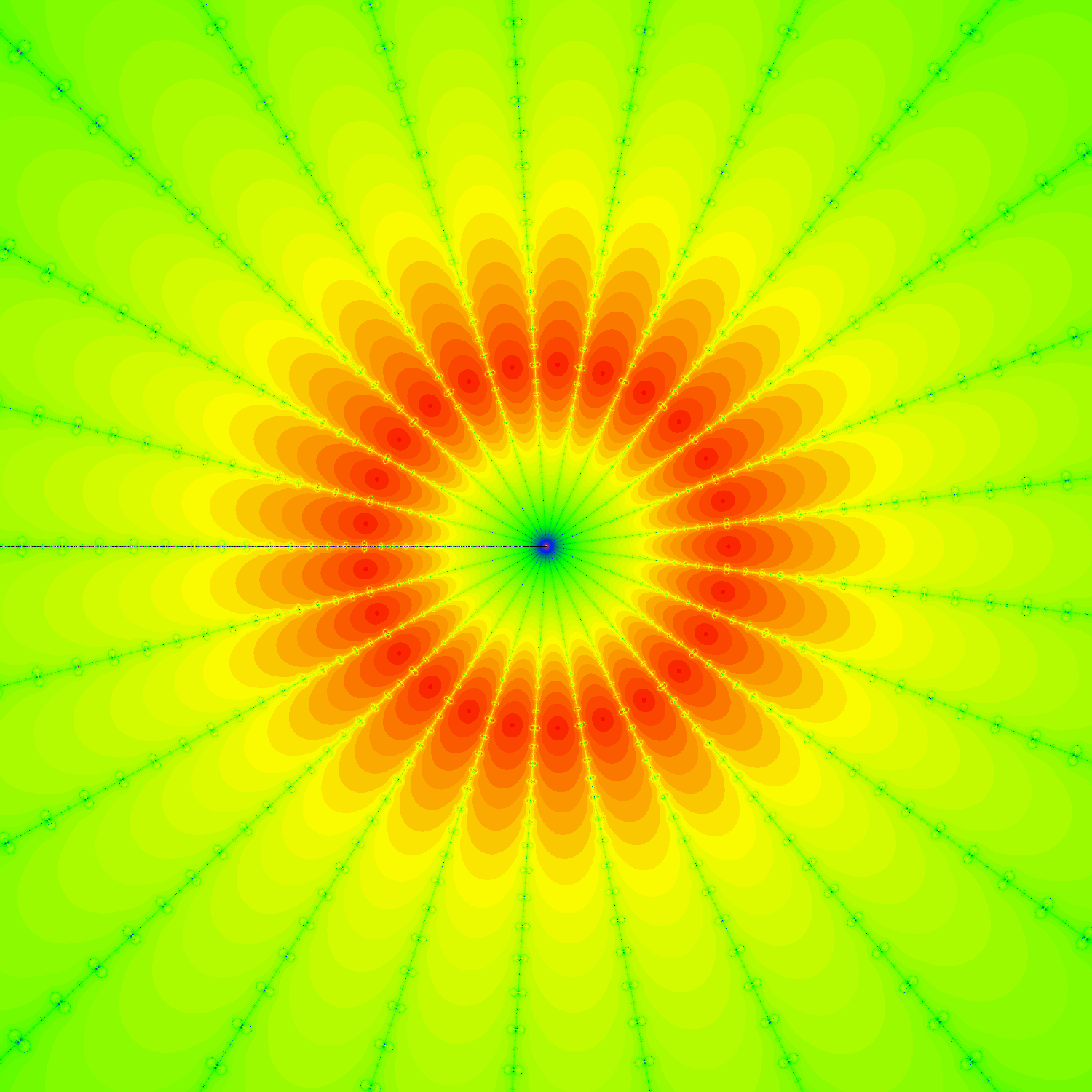};
    \end{axis}
  \end{tikzpicture}
    }
    \subfigure[\small{$\alpha=0$, Chebyshev's method}]{
    	\begin{tikzpicture}
    		\begin{axis}[width=195pt, axis equal image, scale only axis,  enlargelimits=false, axis on top, xtick={-3,-2,-1,0,1,2,3}, ytick={-3,-2,-1,0,1,2,3} ]
      			\addplot graphics[xmin=-3,xmax=3,ymin=-3,ymax=3] {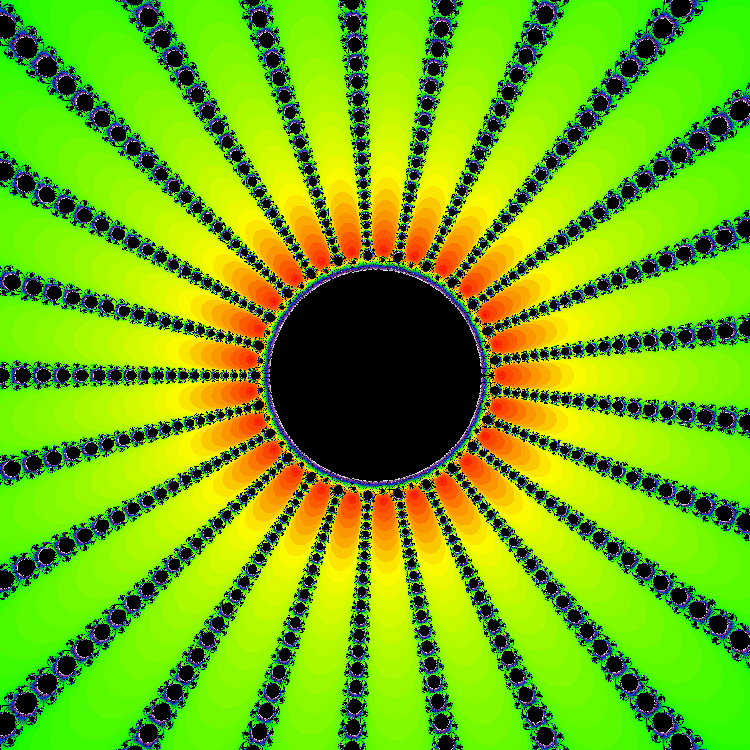};
    		\end{axis}
  		\end{tikzpicture}	
  }
  \subfigure[\small{$\alpha=1$, Super-Halley method} ]{
    \begin{tikzpicture}
    \begin{axis}[width=195pt, axis equal image, scale only axis,  enlargelimits=false, axis on top, xtick={-3,-2,-1,0,1,2,3}, ytick={-3,-2,-1,0,1,2,3} ]
      \addplot graphics[xmin=-3,xmax=3,ymin=-3,ymax=3] {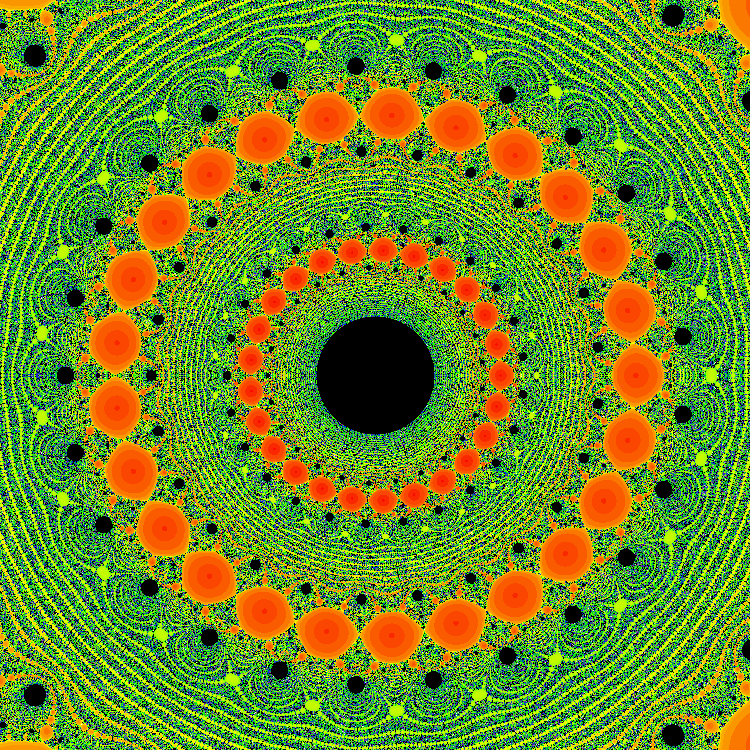};
    \end{axis}
  \end{tikzpicture}
    }
    \subfigure[\small{$\alpha=(1+i\sqrt{48})/24$}]{
    	\begin{tikzpicture}
    		\begin{axis}[width=195pt, axis equal image, scale only axis,  enlargelimits=false, axis on top, xtick={-3,-2,-1,0,1,2,3}, ytick={-3,-2,-1,0,1,2,3} ]
      			\addplot graphics[xmin=-3,xmax=3,ymin=-3,ymax=3] {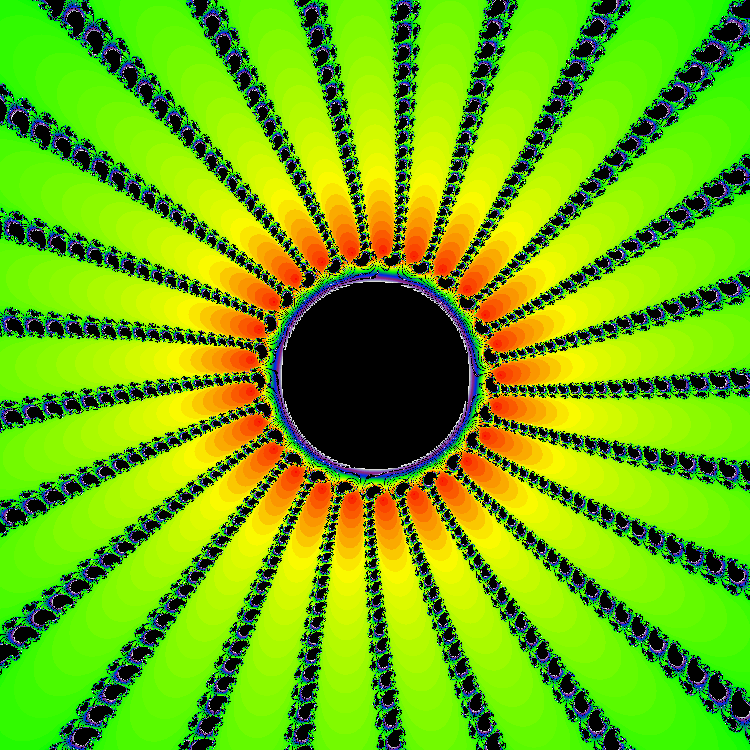};
    		\end{axis}
  		\end{tikzpicture}	
  }
  \subfigure[\small{$\alpha=4i$} ]{
    \begin{tikzpicture}
    \begin{axis}[width=195pt, axis equal image, scale only axis,  enlargelimits=false, axis on top,  xtick={-3,-2,-1,0,1,2,3}, ytick={-3,-2,-1,0,1,2,3}  ]
      \addplot  graphics[xmin=-3,xmax=3,ymin=-3,ymax=3] {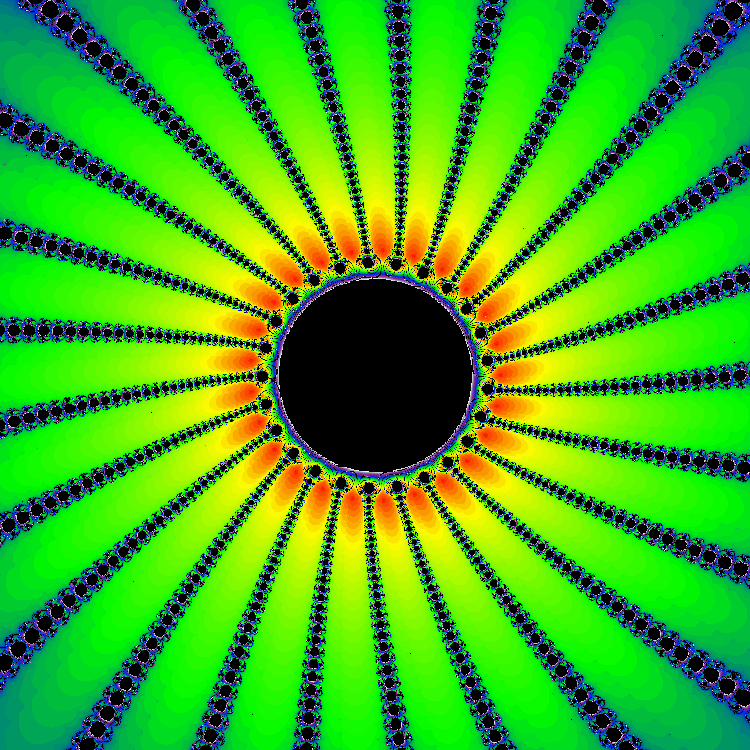};
    \end{axis}
  \end{tikzpicture}
    }
      \caption{\small Dynamical planes of $O_{n, \alpha}(z)$ for  $n=25$ and different values of $\alpha$.\label{fig:dynam25}}
    \end{figure}

In Figure~\ref{fig:dynam3}, Figure~\ref{fig:dynam10} and Figure~\ref{fig:dynam25} we show the dynamical planes of the operator $O_{n,\alpha}$ for several of the most relevant parameters studied along the paper for $n=3$, $n=10$ and $n=25$, respectively. The parameters analysed are the following.
 Subfigure (a) corresponds to  $\alpha=\frac{2n-1}{3n-3}$, which provides the method with order of convergence 4 to the roots (see Proposition~\ref{grado4}). Subfigures (b), (c), and (d) correspond to $\alpha=1/2$ (Halley's method), $\alpha=0$ (Chebyshev's method) and $\alpha=1$ (Super-Halley method), respectively. Subfigure (e) corresponds to $\alpha =\frac{1+ \sqrt{2\left(1-n\right) }}{n-1}$, which is a bifurcation parameter for which the free critical points are mapped to $z=0$ (see Lemma~\ref{lemmaprecrit}). Finally, subfigure (f) corresponds to $\alpha=4i$ which is a parameter in the unbounded component of the complement of the Collar for which the Julia set is disconnected.

A priori, the parameter that should provide a better dynamical behaviour is $\alpha=\frac{2n-1}{3n-3}$ (subfigure (a)), since the corresponding operators have order of convergence 4 to the roots. We observe that, near the roots, it is the parameter with a fastest rate of convergence (it has the most intense red near the roots). However, the global dynamics are  complex. Indeed, for $n=10$ and $n=25$ we observe how the Julia set becomes complicated and there appear black disks of initial conditions which do not converge to the roots after 75 iterates. We want to point out that if we increase the number of iterates we can make disappear this disk for $n=10$. Nevertheless, for $n=25$ the disk decreases very slowly when we increase the number of iterates. This is a kind of pathological behaviour which is not related to stable behaviour different from the roots and can lead to big sets of initial conditions which do not converge to the roots.
The parameters $\alpha=0$ (Chebyshev's method, subfigure (c)),  $\alpha =\frac{1+ \sqrt{2\left(1-n\right) }}{n-1}$ (subfigure (e)) and $\alpha=4i$ (subfigure (f)) present a similar dynamical behaviour. From them, the one which presents a larger black disk around $z=0$ is $\alpha=0$. This indicates that Chebyshev's method may not be an outstanding root finding algorithm to apply to the family $z^n+c$  when $n$ is big. On the other hand, the parameter  $\alpha =\frac{1+ \sqrt{2\left(1-n\right) }}{n-1}$, which is a bifurcation parameter in the Collar, presents a better dynamical behaviour than the parameter  $\alpha=4i$, which is a parameter corresponding to an operator with a disconnected Julia set. Indeed, the speed of convergence to the roots of the operators corresponding to Chebyshev's method and $\alpha =\frac{1+ \sqrt{2\left(1-n\right) }}{n-1}$ is much faster than the one of $\alpha=4i$. We can conclude that the black regions which appear  around bifurcation parameters in the parameter plane (see Figure~\ref{fig:paramgaton} and Figure~\ref{fig:paramzoombif}) are not necessarily related to particularly  bad dynamical behaviour and they are preferable than the parameters corresponding to disconnected Julia set.

 The parameter $\alpha=1$ is particularly interesting. For $n=2$ the corresponding operator has order of convergence 4 to the roots. It still presents very good dynamical behaviour for $n=3$ (see Figure~\ref{fig:dynam3} (d)). However, when we increase $n$ its dynamics get worse very fast. For $n=10$ the corresponding Julia is already quite complex (see Figure~\ref{fig:dynam10} (d)), even if there is no pathological black disk around $z=0$. Notice that for $n=10$ this parameter still belongs to the same hyperbolic component than $\alpha=\frac{2n-1}{3n-3}$. Nevertheless, it is very close to the boundary of the hyperbolic component (see Figure~\ref{fig:paramzoomsing} (a)). For $n=25$, the parameter $\alpha=1$ falls into a cascade of bifurcations (see Figure~\ref{fig:paramzoomsing} (b)). The corresponding dynamics are very bad in terms of convergence to the roots. Indeed, the immediate basins of attraction of the roots are very small for $n=25$. We can conclude that parameters within the cascade of bifurcations which appears around  $\alpha=\frac{2n-1}{2n-2}$ (see Figure~\ref{fig:paramzoomsing}) present bad dynamical behaviour. This is not good news since this bifurcation is very close to the parameter $\alpha=\frac{2n-1}{3n-3}$, which corresponds to the method of order of convergence  4. Hence, a small modification on the parameter may lead to very good or very bad dynamics.

Finally, we want discuss about Halley's method ($\alpha=1/2$). Even if $\alpha=1/2$ is a bifurcation parameter, the structure of the bifurcations around it is very simple (see Figure~\ref{fig:paramzoombif}). We can observe how, for big $n$, this parameter is the one which provides a better dynamical behaviour. Indeed, this is the only parameter with no black disk around $z=0$ for $n=25$. Despite that the convergence to the roots is not as fast as for $\alpha=\frac{2n-1}{3n-3}$, the stability of the dynamics makes of Halley's method a better root finding algorithm for the family $z^n+c$.  We conclude that Halley's method is the best member of the Chebyshev-Halley family when applied to $z^n+c$.

\bigskip

\bigskip

\textbf{Acknowledgments:} The first and third authors were supported by the
Spanish project MTM2014-52016-C02-2-P, the Generalitat Valenciana Project
PROMETEO/2016/089 and UJI project P1.1B20115-16. The second author was
supported by the ANR grant Lambda ANR-13-BS01-0002.

\bibliographystyle{plain}
\bibliography{bibliografia}

\end{document}